\newtheorem{theorem}{Theorem}
\newtheorem{proposition}{Proposition}
\newtheorem{lemma}{Lemma}
\newtheorem{assumption}{Assumption}
\newtheorem{definition}{Definition}
\newtheorem{remark}{Remark}
\newtheorem{example}{Example}
\DeclareMathOperator{\argmax}{argmax}
\DeclareMathOperator{\ind}{\mathbbm{1}}
\DeclareMathOperator{\esp}{\mathbb{E}}
\DeclareMathOperator{\FMR}{FMR}
\DeclareMathOperator{\mFMR}{mFMR}
\DeclareMathOperator{\proc}{\mathcal{C}}
\DeclareMathOperator{\Pstar}{\mathbb{P}_{\theta^*}}
\DeclareMathOperator{\VCdim}{\mathscr{V}}
\DeclareMathOperator{\Radm}{\mathfrak{R}}
\renewcommand{\P}{\mathbb{P}}
\newcommand{\subsetEsti}{\mathcal{D}}
\renewcommand*{\thefootnote}{\fnsymbol{footnote}}
\begin{document}

\begin{center}
{\LARGE
	{  False membership rate control in mixture models}
}

\bigskip

Ariane Marandon$^{1}\footnotemark{}$, Tabea Rebafka$^{1}$, Etienne Roquain$^{1}$, Nataliya Sokolovska$^{2}$
\bigskip

\end{center}

{\small
{
$^{1}$ LPSM, Sorbonne Universit\'e, Universit\'e de Paris \& CNRS, 4, Place Jussieu, 75005, Paris, France

$^{2}$ LCQB, Sorbonne Universit\'e, CNRS, 4, Place Jussieu, 75005, Paris, France

}
}

\bigskip

\bigskip

\footnotetext{Corresponding author: ariane.marandon-carlhian@sorbonne-universite.fr.}

\begin{abstract}

The clustering task consists in partitioning elements of a sample into homogeneous groups.
Most datasets contain individuals that are ambiguous and  intrinsically difficult to attribute to one or another cluster.   
However, in practical applications, misclassifying individuals is potentially disastrous and should be avoided. 
To keep the misclassification rate small, one can decide to   classify only {\it a part} of the sample. 
 In the supervised setting, this approach is well known and  referred to as classification with an abstention option. 
In this paper the approach is revisited in an unsupervised mixture-model framework 
and the purpose  is to develop a method that comes with the guarantee that the false membership rate (FMR) does not exceed a predefined nominal level $\alpha$.
A plug-in procedure is proposed, for which a theoretical analysis is provided, by quantifying the FMR deviation with respect to the target level $\alpha$ with explicit remainder terms. 
Bootstrap versions of the procedure are shown to improve the performance in numerical experiments.

\end{abstract}

{\bf Keywords.} Classification with abstention, clustering, False discovery rate, mixture models.

\renewcommand*{\thefootnote}{\arabic{footnote}}
\setcounter{footnote}{0}

\section{Introduction}

\subsection{Background}

Clustering is a standard statistical task that aims at grouping together individuals with similar features.
However, it is common that data sets include ambiguous individuals that are inherently difficult to classify, which makes the clustering result   potentially unreliable. 
To illustrate this point, consider a Gaussian mixture model
 with overlapping mixture components. Then it is difficult, or even impossible, to assign the correct cluster label to data points that fall in the overlap of those clusters, see Figure~\ref{FMR:fig:intro}. Hence, when
 the overlap is large (Figure~\ref{FMR:fig:intro} panel (b)), the  misclassification rate of a standard clustering method  is inevitably elevated.
 
This issue is critical in applications where misclassifications come with a high cost for the user and should be  avoided. This is for example the case for medical diagnosis, where an error can have severe consequences on the individual's health. When there is too much uncertainty, a solution is to avoid classification for such individuals, and to adopt a wiser ``abstention decision'', that leaves the door open for further medical exams. 

In a supervised setting, classification with a reject (or abstention) option is a long-standing statistical paradigm, that can be traced back to \cite{chow70}, with more recent works including \cite{herbei06,bartlett08,wegkamp2011support}, among others.  
In this line of research, rejection is accounted for by adding a term to the risk that penalizes any rejection (i.e., non classification).


	Recently,  still in the supervised setting, \cite{geifman17} and \cite{angelopoulos2021learn} have considered the problem of having a prescribed control of the classification error among the classified items (those that are not rejected). In these works the proposed method consists of thresholding the estimated class probabilities estimated by a pre-trained classifier, in a data-driven manner. Both of these works provide the guarantee that the resulting selective classifier has its true risk bounded by a prescribed level with high probability. 

\subsection{Aim and approach}

The goal of the present work is to propose a labelling guarantee on the classified items 
 in the  more challenging unsupervised setting, where no training set is  available and data are assumed to be generated from a finite mixture model. 
 This is achieved by the possibility to refuse to cluster ambiguous individuals and by using the false membership rate (FMR), which is defined as the average proportion of misclassifications among the classified objects. Our procedures are devised to keep the FMR below some nominal level $\alpha$, while  classifying a maximum number of items.

It is important to understand the role of the  nominal level $\alpha$ in our approach. 
It is chosen by the user and depends on their acceptance or tolerance for misclassified objects. 
Since the FMR is the misclassification risk that is allowed on the classified objects, the final interpretation of an FMR control at level $\alpha$ is clear: if, for instance, $\alpha$ is set to  $5\%$ and $100$ items are finally chosen to be classified by the method, then the number of misclassified items is expected to be at most $5$. This high interpretability is similar to the one of the false discovery rate (FDR) in multiple testing, which has known a great success in applications since its introduction by \cite{BH1995}. 
This is a clear advantage of our approach  for practical use compared to the methods with a rejection option that are based on a penalized risk.

In our framework, a procedure is composed of two intertwined decisions: 
\begin{itemize}
\item a clustering method inferring the labels; 
\item a selection rule deciding which items to label.
\end{itemize}
Importantly, the selection rule is only applied \textit{after} a clustering method is fitted on the (entire) sample. In other words, the procedure consists of two subsequent steps: a clustering step, after which cluster labels are kept fixed, and a selection step, that chooses which items to classify -- in which case, the label from the previous clustering step is assigned. For the items that are not selected, we discard the cluster label, that is, we effectively abstain to make a classification decision for those items. 
In particular, we emphasize that the clustering method is not fitted again after selection (which would lead to bias in general).

The quality of the selection heavily relies on the appropriate quantification of the uncertainty of the cluster labels. For this, our approach is model-based, and can be viewed as a method that  thresholds the posterior probabilities of the cluster labels with a data-driven choice of the threshold.
The performance of the method will depend on the quality of the estimates of these posterior probabilities in the mixture model.

The adaptive character of our method is illustrated in Figure~\ref{FMR:fig:intro}: when the clusters are well separated (panel (a)), the new procedure only discards few items and provides  a clustering close to the correct one. However, when clusters are overlapping (panel (b)), to avoid a high misclassification error,  the procedure discards most of the items and only provides few labels, for which the uncertainty is low. 
In both cases, the proportion of misclassified items among the selected ones is small and in particular close to the target level $\alpha$ (here $10\%$).  Hence, by adapting the amount of labeled or discarded items, our method always delivers a reliable clustering result, inspite of the varying  intrinsic difficulty of the clustering task.

\begin{figure}
\begin{minipage}{0.495\linewidth}
	\centering
	\includegraphics[width=0.5\linewidth]{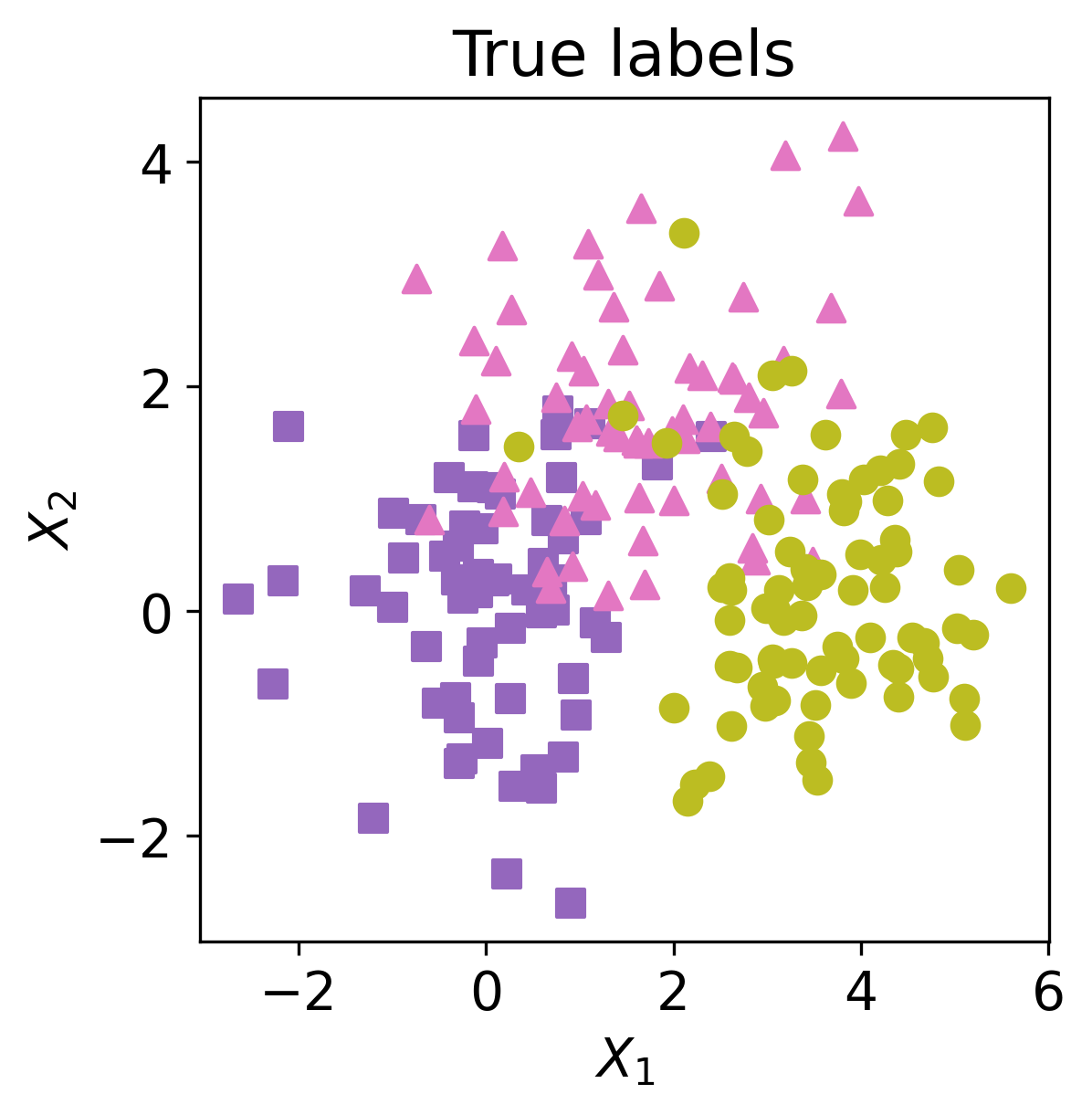}%
	\includegraphics[width=0.5\linewidth]{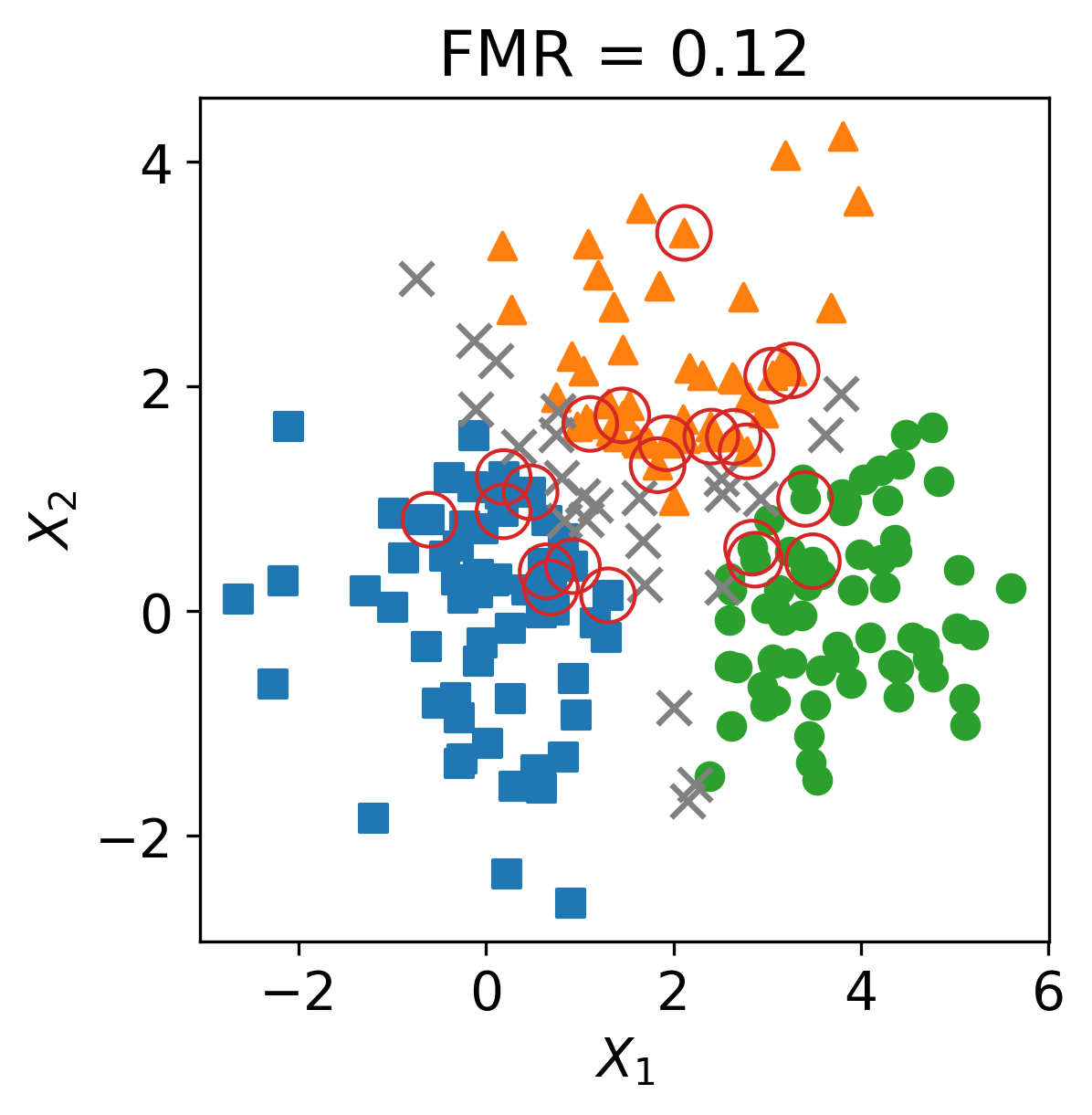}
	\subcaption{Separated clusters}
\end{minipage}
\begin{minipage}{0.495\linewidth}
	\centering
	\includegraphics[width=0.5\linewidth]{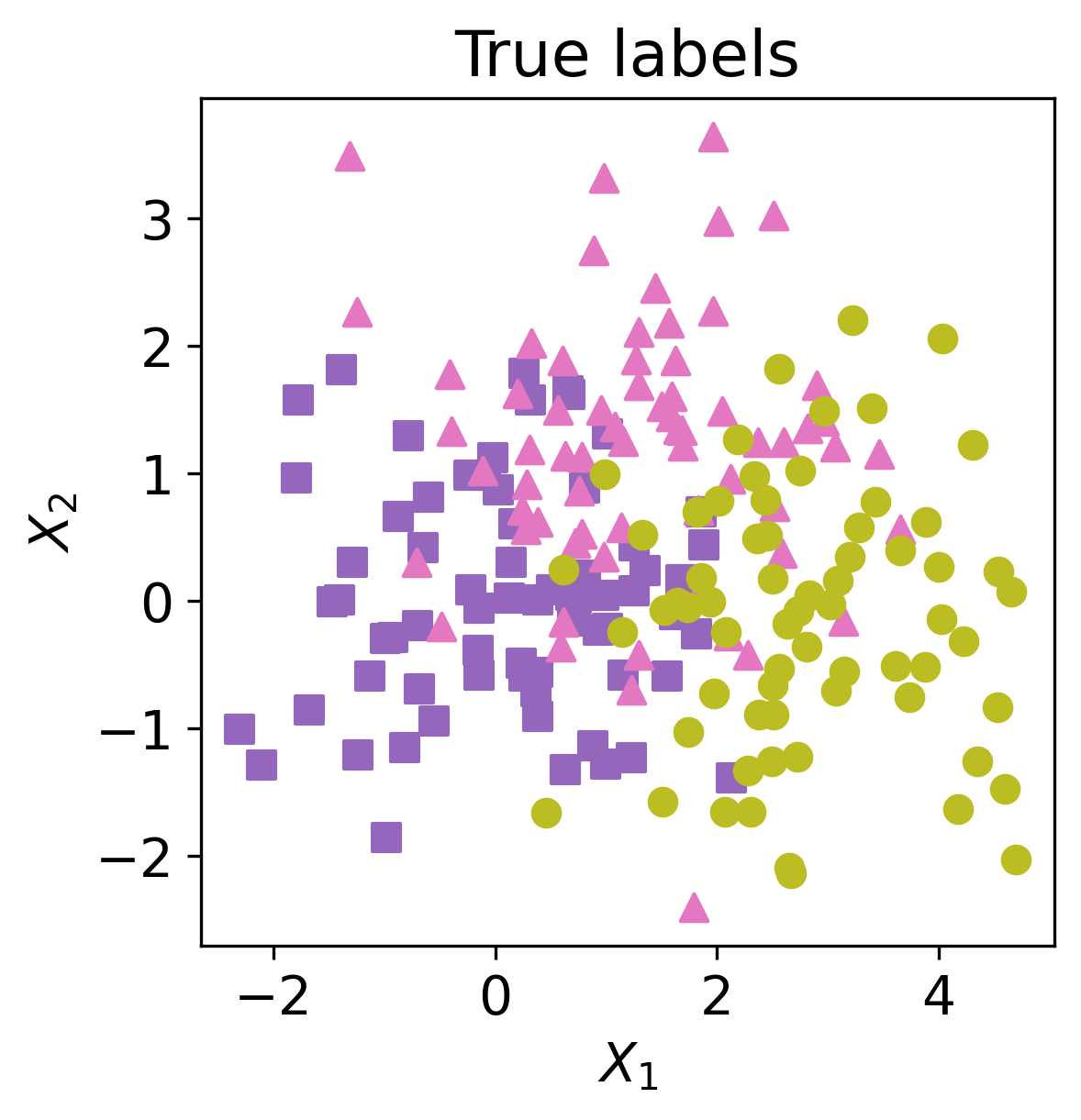}%
	\includegraphics[width=0.5\linewidth]{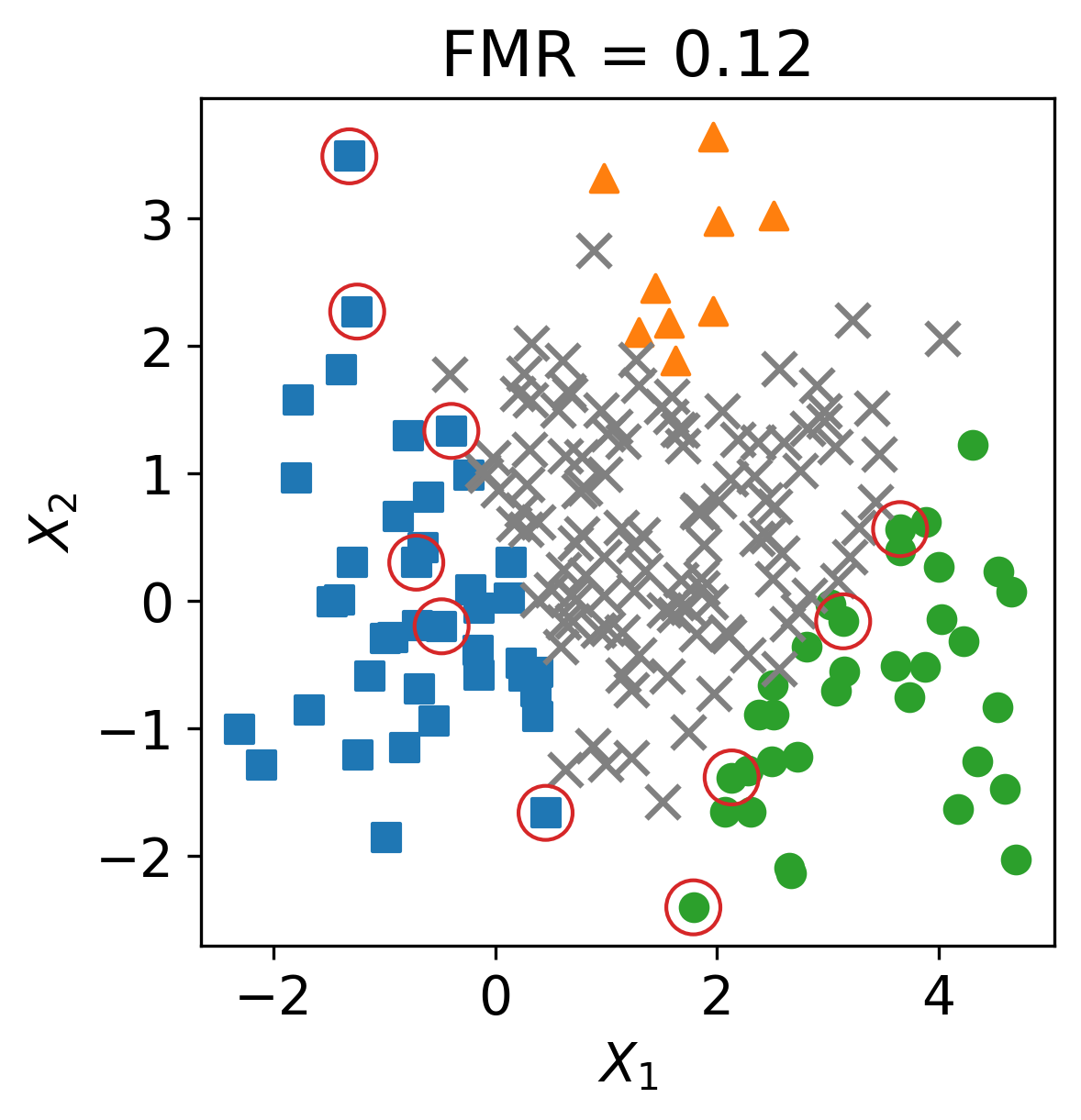}
	\subcaption{Ambiguous clusters}
\end{minipage}
\caption{Data from Gaussian mixtures with three components ($n=200$), in a fairly separated case (panel (a)) and an ambiguous case (panel (b)). In each panel, the left part displays the true clustering, while the right part illustrates the new procedure (plug-in procedure at level $\alpha=10\%$), that does not cluster all items. The points not classified are depicted by grey crosses. Red circles indicate erroneous labels. } \label{FMR:fig:intro} 
\end{figure}

\subsection{Presentation of the results}\label{FMR:sec:contrib}

Let us now describe in more details the main contributions of the paper.

\begin{itemize}

\item We introduce three new data-driven procedures that perform simultaneously selection and clustering: the plug-in procedure (illustrated in Figure~\ref{FMR:fig:intro}) and two bootstrap procedures (parametric and non-parametric), see Section~\ref{FMR:sec:empprocedure}. 

\item We provide a theoretical analysis of the plug-in procedure, quantifying the FMR deviation with respect to the target level $\alpha$ with explicit remainder terms, which become small when the sample size grows. In addition,  this procedure is shown to satisfy the following optimality property: any other procedure that provides an FMR control necessarily classifies as many or less items than the plug-in procedure, up to a small remainder term (Theorem~\ref{FMR:th:convergence_rates}).

\item Numerical experiments\footnotemark{} establish that the bootstrap procedures improve the plug-in procedure, and thus are more reliable for practical use, where the sample size may be moderate, see Section~\ref{FMR:sec:numexp}. In particular, the FMR control is shown to be valid in various scenarios, including those where the overall misclassification risk (with no abstention option) is too large.

\footnotetext{
We publicly release the code of these experiments at \url{https://github.com/arianemarandon/fmrcontrol}. We have also included a Jupyter notebook that demonstrates the use of our procedures. 
}

\item Our analysis also shows that a fixed threshold procedure that only labels items with a maximum posterior probability larger than $1-\alpha$ is generally suboptimal for an FMR control at level $\alpha$, see Section~\ref{FMR:sec:numexp}. To this extent, our procedures can be seen as refined algorithms that classify more individuals while   maintaining the FMR control.

\item The practical impact of our approach is demonstrated on a real data set, see Section~\ref{FMR:sec:realdata}. 

\end{itemize}

\subsection{Relation to previous work}

\paragraph{Other clustering guarantees in unsupervised learning}

While we provide a specific FMR control guarantee on the clustering, other criteria, not particularly linked to a rejection option, have been previously proposed in an unsupervised setting. Previous works provided  essentially two types of guarantees:
while early works focused on the probability of exact recovery \citep{arora05, vempala04, abbe18}, recent contributions rather considered minimizing the misclassification risk \citep{LR2015, lu2016statistical, giraud18, chretien19}. 
Other criteria include the probability to make a  different decision  than the Bayes rule \citep{azizyan13}, or the fact that all clusters are mostly homogeneous with high probability \citep{najafi20}.  
All these works provide a guarantee only if the setting is favorable enough. By contrast, providing a rejection option is the key to obtain a guarantee in any setting (in the worst situation, the procedure will not classify any item).


\paragraph{Comparison to \cite{denis20} and \cite{maryhuard21}} 
We describe here two recent studies that are related to ours, because they also use a FMR-like criterion. The first one is the work of \cite{denis20}, which also relies on a thresholding of the (estimated) posterior probabilities. However, the control is different, because it does not provide an FMR control, but rather a type-II error control concerning the probability of classifying an item. Also, the proposed procedure therein requires an additional labeled  sample (semi-supervised setting), which is not needed in our context.

The work of \cite{maryhuard21} also proposes a control of the FMR.
However, the analysis therein is solely based on the case where the model parameters are known (thus corresponding to the oracle case developed in Section~\ref{FMR:sec:oracle} here). Compared to \cite{maryhuard21}, the present work provides number of new contributions, which are all given in Section~\ref{FMR:sec:contrib}.
Let us also emphasize that we handle the label switching problem in the FMR, which seems to be  overlooked in \cite{maryhuard21}.

\paragraph{Relation to the false discovery rate}

The FMR is closely related to the false discovery rate (FDR) in multiple testing, defined as the average proportion of errors among the discoveries. In fact, we can roughly view the problem of designing an abstention rule as testing, for each item $i$, whether the clustering rule correctly classifies item $i$ or not. With this analogy, our selection rule is based on quantities similar to the local FDR values \citep{ETST2001}, a key quantity to build optimal FDR controlling procedures in multiple testing mixture models, see, e.g., \cite{storey2003positive,SC2007,CSWW2019,rebafka22}.
In particular, our final selection procedure shares similarities with the procedure introduced in \cite{SC2007}, also named cumulative $\ell$-value procedure \citep{abraham2022empirical}. In addition, our theoretical analysis is related to the work of \cite{rebafka22}, although the nature of the algorithm developed therein is different from here: they use the $q$-value procedure of \cite{storey2003positive}, while our method rather relies on the cumulative $\ell$-value procedure.

\subsection{Organization of the paper}

The paper is organized as follows: Section~\ref{FMR:sec:setting} introduces the 
model and relevant notation, namely 
the FMR criterion, with a particular care for the label switching problem. Section~\ref{FMR:sec:methods} presents the new methods: the oracle, plug-in and the bootstrap approaches. Our main theoretical results are provided in Section~\ref{FMR:sec:theory}, after introducing appropriate assumptions.
Section~\ref{FMR:sec:xp} presents numerical experiments and an application to a real data set, while a conclusion is given in Section~\ref{FMR:sec:discussion}. Proofs of the results and technical details are deferred to appendices.

\section{Setting}\label{FMR:sec:setting}

This section presents the notation, model, procedures and criteria that will be used throughout the manuscript.

\subsection{Model}\label{FMR:sec:model}

Let $\mathbf X  = (X_1,\dots,X_n)$ be an observed random sample of size $n$. Each $X_i$ is an i.i.d. copy of a $d$-dimensional real random vector, which is assumed to follow the standard mixture model: 
\begin{align*}
Z &\sim \mathcal{M}(\pi_1, \dots, \pi_Q), \\
X | Z = q &\sim F_{\phi_q}, \quad 1 \leq q \leq Q,
\end{align*}
where $\mathcal{M}(\pi_1, \dots, \pi_Q)$ denotes the multinomial distribution of parameter $\pi$  (equivalently, $\pi_q= \P(Z=q)$ for each $q$). The model parameters are given by
\begin{itemize}
\item
 the probability distribution $\pi$ on $\{1,\dots,Q\}$ that is assumed to satisfy $\pi_q>0$ for all $q$. Hence, $\pi_q$ corresponds to the probability of being in class $q$;
\item
the parameter ${\mathbf \phi} = (\phi_1, \dots, \phi_Q)\in  \mathcal{U}^Q$, where $\{ F_{u}, u \in \mathcal{U} \}$ is a collection of distributions on $\mathbb{R}^d$. Every distribution $F_{u}$ is assumed to have a density with respect to the Lebesgue measure on $\mathbb{R}^d$, denoted by $f_u$. Moreover, we assume that the $\phi_q$'s are all distinct. 
\end{itemize}
The number of classes $Q$ is assumed to be known and fixed throughout the manuscript (see Section~\ref{FMR:sec:discussion} for a discussion). Thus, the overall parameter is $\theta = (\pi, \phi)$, the parameter  set is denoted by $\Theta$, and the distribution of $(Z,X)$ is denoted by $P_\theta$. The distribution family  $\{ P_\theta, \theta \in \Theta \}$ is the considered statistical model.
We also assume that $\Theta$ is an open subset of $\mathbb{R}^K$ for some $K\geq 1$ with the corresponding topology. 

In this mixture model, the latent  vector $\mathbf Z = (Z_1, \dots, Z_n)$  encodes a partition  of the $n$ observations into  $Q$ classes  given by $\{ 1\leq i\leq n\::\:Z_i=q\}$, $1\leq q\leq Q$. We  refer to this model-based, random partition as the true latent clustering in the sequel. 

In what follows, the ``true'' parameter that generates $(Z,X)$ is assumed to be fixed and is denoted by $\theta^*\in \Theta$. 


\subsection{Procedure and criteria}

Our approach starts with a given clustering rule, that aims at recovering the true latent clustering for all observed items.
In general, a clustering rule is defined as a (measurable) function of the observation $\mathbf{X}$ returning a vector of labels $\widehat{\mathbf{Z}}=(\hat{Z}_i)_{1\leq i\leq n}\in \{1,\dots,Q\}^n$ for which the label $q$ is assigned to individual $i$ if and only if $\hat{Z}_i=q$. Note that in the unsupervised setting only the partition of the observations is of interest, not the labels themselves. Switching the labels of $\widehat{\mathbf{Z}}$ does not change the corresponding partition. 



The classification error of $\widehat{\mathbf{Z}}$, with respect to specific labels, is given by $\varepsilon(\widehat{\mathbf{Z}} ,\mathbf{Z}) =  \sum_{i=1}^n \ind \{Z_i \neq \hat{Z}_i \}$.  
A label-switching invariant error is the clustering risk of $\widehat{\mathbf{Z}}$ defined by
\begin{align}\label{clusteringrisk} 
R(\widehat{\mathbf{Z}}) =  \esp_{\theta^*} \left( \underset{\sigma \in [Q]}{\min} \esp_{\theta^*} \left(n^{-1} \varepsilon( \sigma(\widehat{\mathbf{Z}}) ,\mathbf{Z}) \:|\: \mathbf X \right)\right), 
\end{align}
where 
$[Q]$ denotes the set of all permutations on $\{1,\dots,Q\}$. The minimum over all permutations $\sigma$ is the way to handle the aforementioned label-switching problem. 

\begin{remark}
The position of the minimum w.r.t. $\sigma$ in the risk \eqref{clusteringrisk} matters: the permutation $\sigma$ is allowed to depend on $X$ but not on $Z$. Hence, this risk has to be understood as being computed up to a data-dependent label switching.  This definition coincides with the usual definition of the misclassification risk in the situation where the true clustering is deterministic, see \cite{LR2015,lu2016statistical}. Hence, it can be seen as a natural extension of the latter to a mixture model where the true clustering is random.
\end{remark}

Classically, we aim to find a clustering rule $\widehat{\mathbf{Z}}$ such that the clustering risk is ``small''. However, as mentioned above, whether this is possible or not depends on the intrinsic difficulty of the clustering problem and thus of the true parameter $\theta^*$ (see Figure~\ref{FMR:fig:intro}). Therefore, the idea is to provide a selection rule, that is, a (measurable) function of the observation $\mathbf{X}$ returning a subset of indices $S \subset \{1,\dots,n\}$, such that the clustering risk {\it with restriction to $S$} is small. 
 Throughout the paper, a {\it procedure} refers to a couple $\proc=(\widehat{\mathbf{Z}}, S)$, where $\widehat{\mathbf{Z}}$ is a clustering rule and $S$ is a selection rule.

\begin{definition}[False membership rate] The false membership rate (FMR) of a procedure $\proc=(\widehat{\mathbf{Z}}, S)$ is given by
\begin{align}\label{defFMR}
\FMR_{\theta^*}(\proc) = \esp_{\theta^*} \left( \underset{\sigma \in [Q]}{\min} \esp_{\theta^*} \left( \frac{\varepsilon_S(\sigma(\widehat{\mathbf{Z}}) ,\mathbf{Z})}{\max(|S|,1)} \: \bigg|\: \mathbf X\right)\right) ,\end{align}
where $\varepsilon_S(\widehat{\mathbf{Z}} ,\mathbf{Z}) =  \sum_{i\in S} \ind \{Z_i \neq \hat{Z}_i \}$  
denotes the misclassification error  restricted to subset $S$. 
\end{definition}

In this work, the aim is  to find a procedure $\proc$ such that the false membership rate is controlled at a nominal level $\alpha$, that is, 
 $\FMR_{\theta^*}(\proc)\leq \alpha$.
 Obviously, choosing $S$ empty  implies $\varepsilon_S( \sigma(\widehat{\mathbf{Z}}),\mathbf{Z})=0$ a.s. for any permutation $\sigma$ and thus satisfies this control.
Hence, 
 while maintaining the control $\FMR_{\theta^*}(\proc)\leq \alpha$, we aim to classify as much individuals as possible, that is, to make $\mathbb{E}_{\theta^*} |S|$ as large as possible. 

The definition of the FMR \eqref{defFMR} involves an expectation of a ratio, which is more difficult to handle than a ratio of expectations. Hence,  the  following simpler alternative criterion will also be useful in our analysis. 

\begin{definition}[Marginal false membership rate] The \textit{marginal} false membership rate (mFMR) of a procedure $\proc=(\widehat{\mathbf{Z}}, S)$ is given by
\begin{align}\label{defmFMR}
\mFMR_{\theta^*} ( \proc ) = \frac{\esp_{\theta^*} \left( \underset{\sigma \in [Q]}{\min} \esp_{\theta^*} \left( \varepsilon_S( \sigma(\widehat{\mathbf{Z}}),\mathbf{Z})  \: \bigg|\: \mathbf X \right) \right)}{\esp_{\theta^*}(|S|)} , 
\end{align}
with the convention $0/0 = 0$.
\end{definition} 
Note that the mFMR is similar to the criterion introduced in \cite{denis20} in the supervised setting.

\subsection{Notation}

We will extensively use the following notation: for all $q\in \{1,\dots,Q\}$ and $\theta=(\pi,\phi)\in \Theta$, we let
\begin{align}
\ell_q(X, \theta) &= \mathbb{P}_{\theta}( Z = q | X) = \frac{\pi_q f_{\phi_q}(X)} {\sum_{\ell=1}^Q \pi_{\ell} f_{\phi_{\ell}}(X) }; \label{equlvalue} \\
T(X, \theta) &= 1 - \max_{q \in \{1, \dots, Q \}} \ell_q(X, \theta) \in [0,1-1/Q] . \label{equTXtheta} 
\end{align}
We can see that $\ell_q(X, \theta)$ is the posterior probability of belonging to class $q$ given the measurement $X$ under the distribution $P_\theta$.
The quantity $T(X, \theta)$ is a measure of the risk when classifying $X$: it is close to $0$ when there exists a class $q$ such that $\ell_q(X, \theta)$ is close to $1$, that is, when $X$ can be classified with large confidence.

\section{Methods} \label{FMR:sec:methods}

In this section, we introduce new methods for controlling the FMR. We start by identifying an {\it oracle} method, that uses the true value of the parameter $\theta^*$. 
Substituting the unknown parameter $\theta^*$ by an estimator in that oracle provides our first method, called the {\it  plug-in} procedure. We then define a refined version of the plug-in procedure, that accounts for the variability of the estimator and is based on a {\it bootstrap} approach. 

\subsection{Oracle procedures} \label{FMR:sec:oracle}
 
 \paragraph{MAP clustering}
 
Here, we proceed as if an oracle had given us the true value of $\theta^*$ and we introduce an oracle procedure $\proc^*=(\widehat{\mathbf{Z}}^*,S^*)$ based on this value.  
As the following lemma shows, the best clustering rule is well-known and given by the Bayes clustering $\widehat{\mathbf{Z}}^* =  (\widehat{Z}^*_1,\dots,\widehat{Z}^*_n)$, which can be written as
\begin{align}\label{equ:Bayesclustering}
\widehat{Z}^*_i   \in \underset{q \in \{1, ..., Q \}}{\argmax} \;\ell_q(X_i, \theta^*) 
,\:\: i\in \{1,\dots,n\},
\end{align}
where $\ell_q(\cdot) $ is the posterior probability given by \eqref{equlvalue}.

\begin{lemma}\label{lem:clusteringBayes}
We have 
$\min_{\widehat{\mathbf{Z}}} R(\widehat{\mathbf{Z}}) =  R(\widehat{\mathbf{Z}}^*)= n^{-1}\: \sum_{i=1}^n  \esp_{\theta^*} (T_i^* )$, for the Bayes clustering $\widehat{\mathbf{Z}}^*$ defined by \eqref{equ:Bayesclustering} and for
\begin{align}\label{eqTistar}
T_i^* = T(X_i, \theta^*) = \Pstar( Z_i \neq \hat{Z}^*_i | X_i) ,\:\: i\in \{1,\dots,n\},
\end{align}
where $T(\cdot)$ is given by \eqref{equTXtheta}. 
\end{lemma}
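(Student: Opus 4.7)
The plan is to prove both the formula $R(\widehat{\mathbf{Z}}^*)=n^{-1}\sum_i \esp_{\theta^*}(T_i^*)$ and the optimality inequality $R(\widehat{\mathbf{Z}})\ge R(\widehat{\mathbf{Z}}^*)$ simultaneously by a single conditional-expectation computation, exploiting the fact that both quantities sit inside $\esp_{\theta^*}(\min_\sigma \esp_{\theta^*}(\cdot\mid \mathbf X))$. The main ingredients are the conditional independence of the $Z_i$'s given $\mathbf X$ (which follows from $(X_i,Z_i)$ being i.i.d.\ across $i$), and the fact that $\hat Z_i$ is $\mathbf X$-measurable so it acts like a constant under the inner conditional expectation.

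First, I would compute, for an arbitrary clustering rule $\widehat{\mathbf{Z}}$ and an arbitrary $\sigma\in[Q]$,
\[
\esp_{\theta^*}\!\bigl(n^{-1}\varepsilon(\sigma(\widehat{\mathbf{Z}}),\mathbf Z)\mid \mathbf X\bigr)
= n^{-1}\sum_{i=1}^n \P_{\theta^*}(Z_i\neq \sigma(\hat Z_i)\mid \mathbf X)
= n^{-1}\sum_{i=1}^n \bigl(1-\ell_{\sigma(\hat Z_i)}(X_i,\theta^*)\bigr),
\]
using conditional independence and the identity $\P_{\theta^*}(Z_i=q\mid \mathbf X)=\ell_q(X_i,\theta^*)$. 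Then, since $\ell_{\sigma(\hat Z_i)}(X_i,\theta^*)\le \max_q \ell_q(X_i,\theta^*)$ pointwise in $i$, the sum is lower-bounded termwise by $n^{-1}\sum_i T(X_i,\theta^*)=n^{-1}\sum_i T_i^*$. Crucially this lower bound does not depend on $\sigma$ or on $\widehat{\mathbf{Z}}$, so taking the minimum over $\sigma$ and then the outer expectation gives $R(\widehat{\mathbf{Z}})\ge n^{-1}\sum_i \esp_{\theta^*}(T_i^*)$.

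For the matching upper bound, I would specialize the previous display to $\widehat{\mathbf{Z}}=\widehat{\mathbf{Z}}^*$ and $\sigma=\mathrm{id}$: by the very definition \eqref{equ:Bayesclustering} of the Bayes rule, $\ell_{\hat Z_i^*}(X_i,\theta^*)=\max_q \ell_q(X_i,\theta^*)$, so the inner conditional expectation equals exactly $n^{-1}\sum_i T_i^*$. This bounds $\min_\sigma \esp_{\theta^*}(\cdot\mid \mathbf X)$ from above by the same quantity, and taking the outer expectation yields $R(\widehat{\mathbf{Z}}^*)\le n^{-1}\sum_i \esp_{\theta^*}(T_i^*)$. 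Combined with the universal lower bound above (applied to $\widehat{\mathbf{Z}}=\widehat{\mathbf{Z}}^*$), the equality and the minimality statement follow at once.

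I do not foresee a real obstacle: the whole argument is a one-line pointwise domination once one rewrites the conditional risk in terms of the posterior probabilities $\ell_q(X_i,\theta^*)$. The only subtle point worth writing out carefully is that the minimum over $\sigma$ is a \emph{single} permutation applied to the whole vector $\widehat{\mathbf{Z}}$, not a per-coordinate choice; but this is immaterial here because the lower bound $n^{-1}\sum_i T_i^*$ obtained by bounding each summand separately is already achieved by the identity permutation applied to $\widehat{\mathbf{Z}}^*$, so no finer matching between $\sigma$ and the coordinates is needed.
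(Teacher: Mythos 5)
Your proposal is correct and follows essentially the same route as the paper's proof: rewrite the inner conditional expectation as $n^{-1}\sum_i(1-\ell_{\sigma(\hat Z_i)}(X_i,\theta^*))$ via the conditional independence of the $Z_i$'s given $\mathbf X$, lower-bound it termwise by $n^{-1}\sum_i T_i^*$ (the paper phrases this as minimizing over all clusterings coordinate-wise, which is the same pointwise domination), and observe that the Bayes rule with the identity permutation attains the bound. Your explicit remark that the global-permutation constraint is immaterial because the termwise lower bound is already achieved is a correct and welcome clarification of a point the paper leaves implicit.
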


In words, Lemma~\ref{lem:clusteringBayes} states that the oracle statistics  $T_i^* $ correspond to the posterior misclassification probabilities of the Bayes clustering. 
To decrease the overall misclassification risk, it is natural to avoid classification of points with a high value of the test statistic $T_i^*$.

\paragraph{Thresholding selection rules}

In this section, we introduce the selection rule, that decides which items are to be classified.
From the above paragraph, it is natural to consider a thresholding-based selection rule of the form $S=\{i\in \{1,\dots,n\}\::\: T^*_i \leq t\}$, for some threshold $t$ to be chosen suitably.
The following result gives insights for the choice of such a threshold $t$. 


\begin{lemma}\label{lem:FMRTistar}
For a procedure $\proc=(\widehat{\mathbf{Z}}^*, S)$ with Bayes clustering and an arbitrary selection $S$,
\begin{align}\label{FMRoracle}
\FMR_{\theta^*}(\proc)= \esp_{\theta^*} \left( \frac{\sum_{i\in S} T^*_i }{\max(|S|,1)}  \right) .
\end{align}
\end{lemma}

As a consequence, 
 a first way to build an (oracle) selection is to set
$$S=\{i\in \{1,\dots,n\}\::\: T^*_i \leq \alpha\}.$$ 
Since an average of numbers smaller than $\alpha$ is also smaller than $\alpha$, the corresponding procedure controls the FMR at level $\alpha$. 
This procedure is referred to as the procedure {\it  with fixed threshold} in the sequel.
It corresponds to the following naive approach: to get a clustering with a risk of $\alpha$, we only keep the items that are in their corresponding class with a posterior probability of at least $1-\alpha$. 
By contrast, the selection rule considered here is rather $$S=\{i\in \{1,\dots,n\}\::\: T^*_i \leq t(\alpha)\},$$ for a threshold 
$t(\alpha)\geq \alpha$ maximizing $|S|$ under the constraint $\sum_{i\in S} T^*_i \leq \alpha |S|$. It uniformly improves the procedure with fixed threshold and will in general lead to a (much) broader selection. This gives rise to the {\it oracle procedure}, that can be easily implemented by ordering the $T^*_i$'s, see Algorithm~\ref{algo:oracle}.

\begin{algorithm}[t]
   Input: {Parameter $\theta^*$, sample $(X_1, \dots, X_n)$, level $\alpha$.}\\
  1. Compute the posterior probabilities $\Pstar(Z_i = q | X_i)$, $1\leq i\leq n$, $1\leq q\leq Q$;\\
  2.  Compute the Bayes clustering $\widehat{Z}^*_i$, $1\leq i\leq n$, according to \eqref{equ:Bayesclustering};\\
  3. Compute the probabilities $T^*_i$, $1\leq i\leq n$, according to \eqref{eqTistar};\\
  4. Order these probabilities in increasing order $T^*_{(1)}\leq \dots \leq T^*_{(n)}$;\\
  5. Choose $k^*$ the maximum of $k\in \{0,\dots,n\}$ such that $\max(k,1)^{-1} \sum_{j=1}^k T^*_{(j)} (\mathbf{X})\leq \alpha$;\\
  6. Select $S^*_\alpha$, the index corresponding to the $k^*$ smallest elements among the $T^*_i$'s.\\
   Output: {Oracle procedure $\proc_\alpha=(\widehat{\mathbf{Z}}^*, S^*_\alpha)$.}
     \caption{Oracle procedure}\label{algo:oracle}
\end{algorithm}

\begin{algorithm}[t]
   Input: {Sample $(X_1, \dots, X_n)$, level $\alpha$.}\\
  1. Compute an estimator $\hat{\theta}$ of $\theta$;\\
  2. Run the oracle procedure given in Algorithm~\ref{algo:oracle} with $\hat{\theta}$ in place of $\theta^*$.\\
   Output: {Plug-in procedure $\widehat{\proc}^{\mbox{\tiny PI}}_\alpha=(\widehat{\mathbf{Z}}^{\mbox{\tiny PI}}, \widehat{S}^{\mbox{\tiny PI}}_\alpha)$.}
     \caption{Plug-in procedure}\label{algo:pi}
\end{algorithm}

\begin{algorithm}[t]
   Input: {Sample $(X_1, \dots, X_n)$, level $\alpha$, number $B$ of bootstrap runs.}\\
  1. Choose a grid of increasing levels $(\alpha(k))_{1\leq k\leq K}$;\\
  2. Compute $\widehat{\FMR}^{B}_{\alpha(k)}$, $1\leq k\leq K$, according to \eqref{MCboot};\\
  3. Choose $\tilde{k}$ according to \eqref{ktildeboot}.\\
   Output: {Bootstrap procedure $\widehat{\proc}^{\mbox{\tiny boot}}_\alpha=\widehat{\proc}^{\mbox{\tiny PI}}_{\alpha(\tilde{k})}$.}
     \caption{Bootstrap procedure}\label{algo:boot}
\end{algorithm}

\subsection{Empirical procedures} \label{FMR:sec:empprocedure}

\paragraph{Plug-in procedure}

The oracle procedure cannot be used in practice since $\theta^*$ is  generally unknown. 
A natural idea then is to approach $\theta^*$ by an estimator $\hat{\theta}$ and to plug this estimate into the oracle procedure. 
The resulting procedure, denoted $\widehat{\proc}^{\mbox{\tiny PI}}=(\widehat{\mathbf{Z}}^{\mbox{\tiny PI}}, \widehat{S}^{\mbox{\tiny PI}}_\alpha)$, is called the {\it plug-in procedure} and is implemented in Algorithm~\ref{algo:pi}.


In Section~\ref{FMR:sec:theory}, we establish that the plug-in procedure has suitable properties: when $n$ tends to infinity, provided that the chosen estimator $\hat{\theta}$ behaves well and under mild regularity assumptions on the model, the FMR of the plug-in procedure is close to the level $\alpha$, while it is nearly optimal in terms of average selection number. 

\paragraph{Bootstrap procedure}

Despite the favorable theoretical properties shown in  Section~\ref{FMR:sec:theory}, 
 the plug-in procedure achieves an FMR that can exceed $\alpha$ in some situations, as we will see in our numerical experiments (Section~\ref{FMR:sec:xp}). This is in particular the case when the estimator $\hat{\theta}$ is too rough. Indeed, the uncertainty of $\hat{\theta}$ near $\theta^*$ is ignored by the plug-in procedure.

To take into account this effect, we propose to use a bootstrap approach. It is based on the following result.

\begin{lemma}\label{lem:FMRPI}
For a given level $\alpha\in (0,1)$, the FMR of the plug-in procedure $\widehat{\proc}^{\mbox{\tiny PI}}_\alpha$ is given by 
\begin{align}
\FMR (\widehat{\proc}^{\mbox{\tiny PI}}_\alpha) 
&= \esp_{\mathbf{X}\sim P_{\theta^*}} \left( \min_{\sigma \in [Q]}  \frac{\sum_{i=1}^n  \{ 1-
\ell_{\sigma(\hat{Z}^{\mbox{\tiny PI}}_i(\mathbf{X}))}(X_i, \theta^*) 
\} \ind \{ i\in \hat{S}^{\mbox{\tiny PI}}_\alpha(\mathbf{X}) \} } {\max(|\hat{S}^{\mbox{\tiny PI}}_\alpha(\mathbf{X})|,1) } \right).\label{eqFMRplugin}
\end{align}
\end{lemma}

The general idea is as follows: since $\FMR (\widehat{\proc}^{\mbox{\tiny PI}}_{\alpha})$ can exceed $\alpha$, we choose $\alpha'$ as large as possible such that $\widehat{\FMR}_{\alpha'} \leq\alpha$, for which $\widehat{\FMR}_{\alpha'}$ is a bootstrap approximation of $\FMR (\widehat{\proc}^{\mbox{\tiny PI}}_{\alpha'})$ based on  \eqref{eqFMRplugin}.

The bootstrap approximation reads as follows: in the RHS of \eqref{eqFMRplugin}, we replace  the true parameter $\theta^*$ by $\hat{\theta}$ and $\mathbf{X}\sim P_{\theta^*}$ by  $\mathbf{X}'\sim \hat{P}$, where $\hat{P}$ is an empirical substitute of $P_{\theta^*}$. This empirical distribution  $\hat{P}$ is $P_{\hat{\theta}}$ for the parametric bootstrap and the uniform distribution over the $X_i$'s for the non-parametric bootstrap. This yields the bootstrap approximation of $\FMR (\widehat{\proc}^{\mbox{\tiny PI}}_\alpha)$ given by
\begin{align*}
\widehat{\FMR}_\alpha &=  \esp_{\mathbf{X}'\sim \hat{P}} \left( \min_{\sigma \in [Q]} \frac{\sum_{i=1}^n \{
 1-\ell_{\sigma(\hat{Z}^{\mbox{\tiny PI}}_i(\mathbf{X}'))}(X'_i, \hat{\theta}(\mathbf X)) 
 \} \ind \{ i\in \hat{S}^{\mbox{\tiny PI}}_\alpha(\mathbf{X}') \} } {\max(|\hat{S}^{\mbox{\tiny PI}}_\alpha(\mathbf{X}')|,1) } \bigg| \: \mathbf{X} \right) .
\end{align*}
Classically, the latter is itself approximated by  a Monte-Carlo scheme:
\begin{align}\label{MCboot}
\widehat{\FMR}^{B}_\alpha &= \frac{1}{B} \sum_{b=1}^B \min_{\sigma \in [Q]}  \frac{\sum_{i=1}^n \{
 1-\ell_{\sigma(\hat{Z}^{\mbox{\tiny PI}}_i(\mathbf{X}^b))}(X^b_i, \hat{\theta}(\mathbf X)) 
\}
 \ind \{ i\in \hat{S}^{\mbox{\tiny PI}}_\alpha(\mathbf{X}^b) \} } {\max(|\hat{S}^{\mbox{\tiny PI}}_\alpha(\mathbf{X}^b)|,1) },
\end{align}
with $\mathbf{X}^1,\dots, \mathbf{X}^B$ i.i.d. $\sim \hat{P}$ corresponding to the bootstrap samples of $\mathbf{X}$. 

Let $(\alpha(k))_{1 \leq k \leq K}\in (0,1)^K$ be a grid of increasing nominal levels (possibly with restriction to values slightly below the target level $\alpha$). Then, the bootstrap procedure at level $\alpha$ is defined as $\widehat{\proc}^{\mbox{\tiny boot}}_\alpha=\widehat{\proc}^{\mbox{\tiny PI}}_{\alpha(\tilde{k})}$, where 
\begin{align}\label{ktildeboot}
\tilde{k}=\max\left\{  k \in \{1,\dots, K\}\::\: \widehat{\FMR}^{B}_{\alpha(k)}\leq \alpha\right\}.
\end{align}
This procedure is implemented in Algorithm~\ref{algo:boot}.

\begin{remark}[Parametric versus non parametric bootstrap]
The usual difference between parametric and non parametric bootstrap also holds in our context: the parametric bootstrap is fully based on $P_{\hat{\theta}}$,  while the non parametric bootstrap builds an artificial sample (with replacement) from the original sample, which does not come from a $P_\theta$-type distribution. This gives rise to different behaviors in practice: 
when $\hat{\theta}$ is too optimistic (which will be typically the case here when the estimation error is large), the correction brought by the parametric bootstrap (based on $P_{\hat{\theta}}$) is often weaker than that of the non parametric one. By contrast, when  $\hat{\theta}$ is close to the true parameter, the parametric bootstrap approximation is more faithful because it uses the model, see Section~\ref{FMR:sec:xp}.\end{remark}

\section{Theoretical guarantees for the plug-in procedure}\label{FMR:sec:theory}

In this section, we derive theoretical properties for the plug-in procedure: we show that its FMR  and mFMR are close to $\alpha$, while its expected selection number is close to be optimal under some conditions. 

\subsection{Additional notation and assumptions} \label{FMR:sec:notation}

We make use of an optimality theory for mFMR control, that will be developed in detail in Section~\ref{FMR:sec:optimalproc}. 
This approach extensively relies on the following quantities (recall the definition of $T(X, \theta) $ in \eqref{equTXtheta}):
\begin{align}
\mFMR^*_t& = \esp_{\theta^*} \left( T(X, \theta^*) \:|\: T(X, \theta^*) < t \right);\label{mFMRstart}\\
t^*(\alpha) &=  \sup \left\{ t \in [0,1]\::\: \mFMR^*_t \leq \alpha \right\} \label{tstaralpha}\\
\alpha_c &= \inf\{\mFMR^*_t \::\: t\in (0,1], \mFMR^*_t>0\}; \label{alphac}\\
 \bar{\alpha}&=\mFMR^*_1\label{alphabar}.
  \end{align}
In words, $\mFMR^*_t$ is the mFMR of an oracle procedure that selects the $T_i^*$ smaller than some threshold $t$ (Lemma~\ref{mFMR_properties}).  Then, $t^*(\alpha)$ is the optimal threshold such that this procedure has an mFMR controlled at level $\alpha$. Next, $\alpha_c$ and $\bar{\alpha}$ are the lower and upper bounds for the nominal level $\alpha$, respectively, for which the optimality theory can be applied. 

Now, we introduce our main assumption, which will be ubiquitous in our analysis. 

\begin{assumption} \label{Tcontinuity} 
For all $\theta\in \Theta$ and $q \in \{1, \dots, Q\}$, under $P_{\theta^*}$, the r.v. $\ell_q(X, \theta)$ given by \eqref{equlvalue} is continuous. 
In addition, the function $t\mapsto \Pstar \left( T(X, \theta) < t \right)$ is increasing on $(\alpha_c,\bar\alpha)$, where $T(X, \theta)$ is given by \eqref{equTXtheta}.
\end{assumption}

Note that  Assumption~\ref{Tcontinuity} implies the continuity of the r.v. $T(X, \theta)$. 
Indeed, $\P(T(X, \theta)=t)\leq \sum_{q=1}^Q \P(\ell_q(X, \theta) =1-t)$. 
Hence, this assumption implies that $t\mapsto \Pstar \left( T(X, \theta) < t \right)$ is both continuous on $[0,1]$ and increasing on $(\alpha_c,\bar\alpha)$.
This 
is useful in several regards: first, it prohibits ties in the $T(X_i,\theta)$'s, $1\leq i\leq m$, so that the selection rule (see Algorithm~\ref{algo:oracle}) can be truly formulated as a thresholding rule (see Lemma~\ref{lempluginthres}). Second, it entails interesting properties for function $t\mapsto \mFMR^*_t$, see Lemma~\ref{mFMR_properties} (this in particular ensures that the supremum in \eqref{tstaralpha} is a maximum).
Also note that the inequality $0\leq \alpha_c<  \bar{\alpha}< 1-1/Q$ holds under Assumption~\ref{Tcontinuity}.

The next assumption ensures that the density family $\{f_u, u \in \mathcal{U} \}$ is smooth, and will be useful to establish consistency results.

\begin{assumption} \label{densitycontinuity} For $P_{\theta^*}$-almost all $x\in \mathbb{R}^d$, $u \in \mathcal{U} \mapsto f_u(x)$ is continuous. 
\end{assumption} 
Moreover, we can derive convergence rates under the following additional regularity conditions. 
\begin{assumption} \label{lip}
There exist positive constants $r = r(\theta^*), C_1 = C_1(\theta^*), C_2 = C_2(\theta^*, \alpha), C_3 = C_3(\theta^*, \alpha)$ such that
\begin{enumerate}[label={\textbf{~(\roman*)}},
  ref={\theassumption.\roman*}]
\item for $\Pstar$-almost all $x$, $u\in \mathcal{U} \mapsto f_u(x)$ is continuously differentiable, and 
\begin{align*} \sum_{1\leq q\leq Q} \esp_{\theta^*} \underset{\substack{\theta\in \Theta \\  \|\theta-\theta^*\| \leq r}}{\sup} \| \nabla_\theta \ell_q(X, \theta) \| \leq  C_1; \end{align*}
\item for all $t,t'\in [0,1],$ 
$|\Pstar(T(X, \theta^*) < t ) - \Pstar(T(X, \theta^*) < t' )|\leq C_2 |t - t'|;$
\item for all $\beta \in [(\alpha_c+\alpha)/2,(\alpha+ \bar{\alpha})/2] ,$ $| t^*(\beta) - t^*(\alpha) |\leq C_3|\beta - \alpha|$.
 \end{enumerate}
\end{assumption}

\begin{example}
In Appendix~\ref{FMR:sec:gauss}, it is proved that Assumptions~\ref{Tcontinuity},~\ref{densitycontinuity} and~\ref{lip} hold true in the homoscedastic two-component multivariate Gaussian mixture model, see Lemma~\ref{lem:exass}.
\end{example}

Next, we consider the following complexity assumption to ensure concentration of the underlying empirical processes. It is given in terms of the VC dimension of specific function classes involving $\ell_q$. In the sequel, the VC dimension of a function set $\mathscr{F}$ is defined as the VC dimension of the set family $\{ \{ x \in \mathbb{R}^d:f(x) \geq u  \}, f \in \mathscr{F}, u \in \mathbb{R} \}$, see, e.g., \cite{baraud2016bounding}.
We denote
\begin{align}
\VCdim &= \mbox{ VC dimension of } \{\ell_q(., \theta), \theta \in \Theta, 1\leq q \leq Q\}\label{VCdim};\\
\VCdim_{-} &= \mbox{ VC dimension of } \{ \ind\{\ell_q(., \theta) - \ell_{q'}(., \theta) \geq 0  \}, \theta \in \Theta, 1\leq q,q'\leq Q \} . \label{VCdimminus}
\end{align}

\begin{assumption} \label{ass_vcdim}
	The VC dimensions 
	$\VCdim$ and $\VCdim_{-}$ are finite. 
\end{assumption}

\begin{example}
	In the two-component case $Q=2$ where $P_\theta$ belongs to an exponential family, we have that $\VCdim, \VCdim_{-}  \lesssim k^2 \log(k)$ (see Lemma \ref{lem:VCdim_exp}) with $k$ the dimension of the sufficient statistic vector. For instance, $k = d + d^2$ for the Gaussian family, hence  $\VCdim, \VCdim_{-} \lesssim d^4\log(d)$ in that case. (For the specific case of the homoscedastic Gaussian family, we have that $\VCdim, \VCdim_{-}  \lesssim d$, see Lemma \ref{lem:VCdim_gaussian}). 
\end{example}

Let us now discuss conditions on the estimator $\hat{\theta}$ on which the plug-in procedure is based. We start by introducing the following assumption (used in the concentration part of the proof, see Lemma~\ref{lem:conc}).

\begin{assumption} \label{ass_estim}
The estimator $\hat{\theta}$ is assumed to take its values in a countable subset $\subsetEsti$ of $\Theta$.
\end{assumption}
This assumption is a minor restriction, because we can always choose $\subsetEsti\subset \mathbb{Q}^K$ (recall $\Theta\subset \mathbb{R}^K$). 
Next, we additionally define a quantity measuring the quality of the estimator:  for all $\epsilon>0$,
\begin{equation}\label{equeta}
\eta(\epsilon,\theta^*)=\Pstar \left( \underset{\sigma \in [Q]}{\min} \|\hat{\theta}^\sigma - \theta^*\|_2 \geq \epsilon \right).
\end{equation}
\begin{example} 
		The literature provides several results regarding the estimation of Gaussian mixtures, see e.g. \cite{regev17} for a review. Proposition~\ref{prop:etagauss} revisits some of these results, for the estimator derived from  EM algorithm \citep{dempster77,balakrishnan17} and the constrained MLE \citep{ho16}.		
\end{example}

\subsection{Results}

We now state our main results, starting with the consistency of the plug-in procedure.

\begin{theorem}[Asymptotic optimality of the plug-in procedure] \label{consistency}
	Let Assumptions~\ref{Tcontinuity},~\ref{densitycontinuity}, and~\ref{ass_vcdim} be true. 
	Consider an estimator $\hat{\theta}$ satisfying Assumption~\ref{ass_estim} and which is consistent in the sense that for all $\epsilon>0$, the probability $\eta(\epsilon,\theta^*)$ given by \eqref{equeta} tends to $0$ as $n$ tends to infinity.
	Then the corresponding plug-in procedure $\widehat{\proc}^{\mbox{\tiny PI}}_{\alpha}$ (Algorithm~\ref{algo:pi}) satisfies the following: for any $\alpha \in (\alpha_c, \bar{\alpha})$, we have 
	\begin{align*}
		\limsup_n \FMR(\widehat{\proc}^{\mbox{\tiny PI}}_{\alpha}) \leq \alpha, \quad
		\limsup_n \mFMR(\widehat{\proc}^{\mbox{\tiny PI}}_{\alpha}) \leq \alpha,
	\end{align*}
	and for any procedure $\proc = (\widehat{\mathbf{Z}}, S) $ that controls the mFMR at level $\alpha$, we have
	\begin{align*}
		\liminf_n \{ n^{-1}\esp_{\theta^*}(|\widehat{S}^{\mbox{\tiny PI}}_{\alpha}|) -n^{-1} \esp_{\theta^*}(|S|) \} \geq 0. 
	\end{align*}
\end{theorem}

Next, we derive convergence rates under the additional regularity conditions given by Assumption~\ref{lip}.

\begin{theorem}[Optimality of the plug-in procedure with rates] \label{FMR:th:convergence_rates}
	Consider the setting of Theorem~\ref{consistency}, where in addition Assumption~\ref{lip} holds. 
	Recall $\eta(\epsilon,\theta^*)$ defined by \eqref{equeta} and $\VCdim, \VCdim_{-}$ defined by \eqref{VCdim}, \eqref{VCdimminus} respectively.
	Let $s^*$ denote the selection rate of the oracle procedure mentioned in Section~\ref{FMR:sec:notation}, with threshold $t^*(\alpha)$ and applied at level $(\alpha+\alpha_c)/2$. With constants $A>0$ and $B>0$ only depending on $Q, C_1, C_2, C_3, \VCdim, \VCdim_{-}$ and $s^*$, we have for any sequence $\epsilon_n > 0$ tending to zero, for $n$ larger than a constant only depending on $\alpha$ and $\theta^*$,
	\begin{align}
		\FMR(\widehat{\proc}^{\mbox{\tiny PI}}_{\alpha}) &\leq \alpha+ A\sqrt{\epsilon_n} + B\sqrt{\log n/n} + 5/n^2 + \eta(\epsilon_n, \theta^*) \label{FMR_bound} \\
		n^{-1} \esp_{\theta^*}(|\widehat{S}^{\mbox{\tiny PI}}_{\alpha}|) - n^{-1} \esp_{\theta^*}(|S|)&\geq  - A\sqrt{\epsilon_n} - B\sqrt{\log n/n} - 5/n^2 - \eta(\epsilon_n, \theta^*)   \label{sr_bound},
	\end{align}
	for any procedure $\proc = (\widehat{\mathbf{Z}}, S) $ that controls the mFMR at level $\alpha$.
\end{theorem}
The proof is based on a more general non-asymptotical result, for which the remainder terms are more explicit, see Theorem~\ref{result} and Appendix~\ref{FMR:sec:proofmain}. It employs techniques that share similarities with the work of \cite{rebafka22} developed in a different context. 
Here, a difficulty is to handle the new statistic $T(X_i,\hat{\theta})$ which is defined as an extremum, see \eqref{equTXtheta}.

Theorem~\ref{FMR:th:convergence_rates} establishes that, given a model which is regular enough and a consistent estimator, the plug-in procedure controls the FMR and is asymptotically optimal up to remainder terms which are of the order of $\sqrt{\epsilon_n} +\sqrt{\log n/n} + \eta(\epsilon_n, \theta^*)$. Here, $\epsilon_n$ dominates the convergence rate of the parameter estimate, and is taken large enough to ensure that $\eta(\epsilon_n, \theta^*)$ vanishes. 

For instance, in the multivariate Gaussian mixture model (with further assumptions) and by considering either the EM estimator or the constrained MLE, we have $\eta(\epsilon_n, \theta^*) \leq 1/n$ for $\epsilon_n = C\sqrt{\log(n) /n }$, see Proposition~\ref{prop:etagauss}.
This implies that the remainder terms in \eqref{FMR_bound} and  \eqref{sr_bound} are at most of order $((\log n)/n)^{1/4}$.

\section{Experiments} \label{FMR:sec:xp}

In this section, we evaluate the behavior of the new procedures: plug-in (Algorithm~\ref{algo:pi}), parametric bootstrap and non parametric bootstrap (Algorithm~\ref{algo:boot}). For this, we use both synthetic and real data.

\subsection{Synthetic data set}\label{FMR:sec:numexp}

The performance of our procedures is studied via simulations in different settings with various difficulties. All of them are Gaussian mixture models, with possible restrictions on the parameter space. For parameter estimation, the classical EM algorithm is applied with $100$ iterations and $10$ starting points chosen with Kmeans++ \citep{arthur2006k}. In the bootstrap procedures $B=1000$ bootstrap samples are generated. The performance of all procedures is assessed via the {\it sample FMR} and the proportion of classified data points, which is referred to as the {\it selection frequency}. For every setting and every set of parameters, depicted results display the mean over $100$ simulated datasets. 
As a baseline, we consider the fixed threshold procedure in which one selects data points that have a maximum posterior group membership probability that exceeds $1-\alpha$. 
The oracle procedure (Algorithm~\ref{algo:oracle}) is also considered in our experiments for comparison.


\paragraph{Known proportions and covariances} 

In the first setting, the true mixture proportions and covariance matrices are known and used in the EM algorithm. We consider the case $Q=2$, $\pi_1 = \pi_2 = 1/2$ and $\Sigma_1=\Sigma_2 = I_d$ with $I_d$ the $(d \times d)$-identity matrix. For the mean vectors, we set $\mu_1 = 0$ and $\mu_2 = (\epsilon / \sqrt{d},\dots, \epsilon / \sqrt{d})$. The quantity $\epsilon$ corresponds to the mean separation, that is, $\| \mu_1 - \mu_2 \|_2=\epsilon $ and accounts for the difficulty of the clustering problem. 

Figure~\ref{FMR:fig:semiOracle} displays the FMR for nominal level  $\alpha = 0.1$, sample size $n=100$, dimension $d =2$ and varying mean separation $\epsilon\in \{1,\sqrt{2},2,4\}$. 
Globally, our procedures all have an FMR close to the target level $\alpha$ (excepted for the very well separated case $\epsilon=4$ for which the FMR is much smaller because a large part of the items can be trivially classified). In addition, the selection rate is always close to the one of the oracle procedure.
On the other hand, the baseline procedure is too conservative: its FMR can be well below the nominal level and it selects up to 50\% less than the other procedures. This is well expected, because unlike our procedures, the baseline has a fixed threshold and thus does not adapt to the difficulty of the problem. 

We also note that the FMR of the plug-in approach is slightly inflated for a weak separation ($\epsilon=1$). This comes from the parameter estimation, which is difficult in that case. This also illustrates the interest of the bootstrap methods, that allow to recover the correct level in that case, by appropriately correcting the plug-in approach.

\begin{figure}
	\begin{minipage}{0.8\linewidth}
		\centering
		\includegraphics[width=0.6\linewidth]{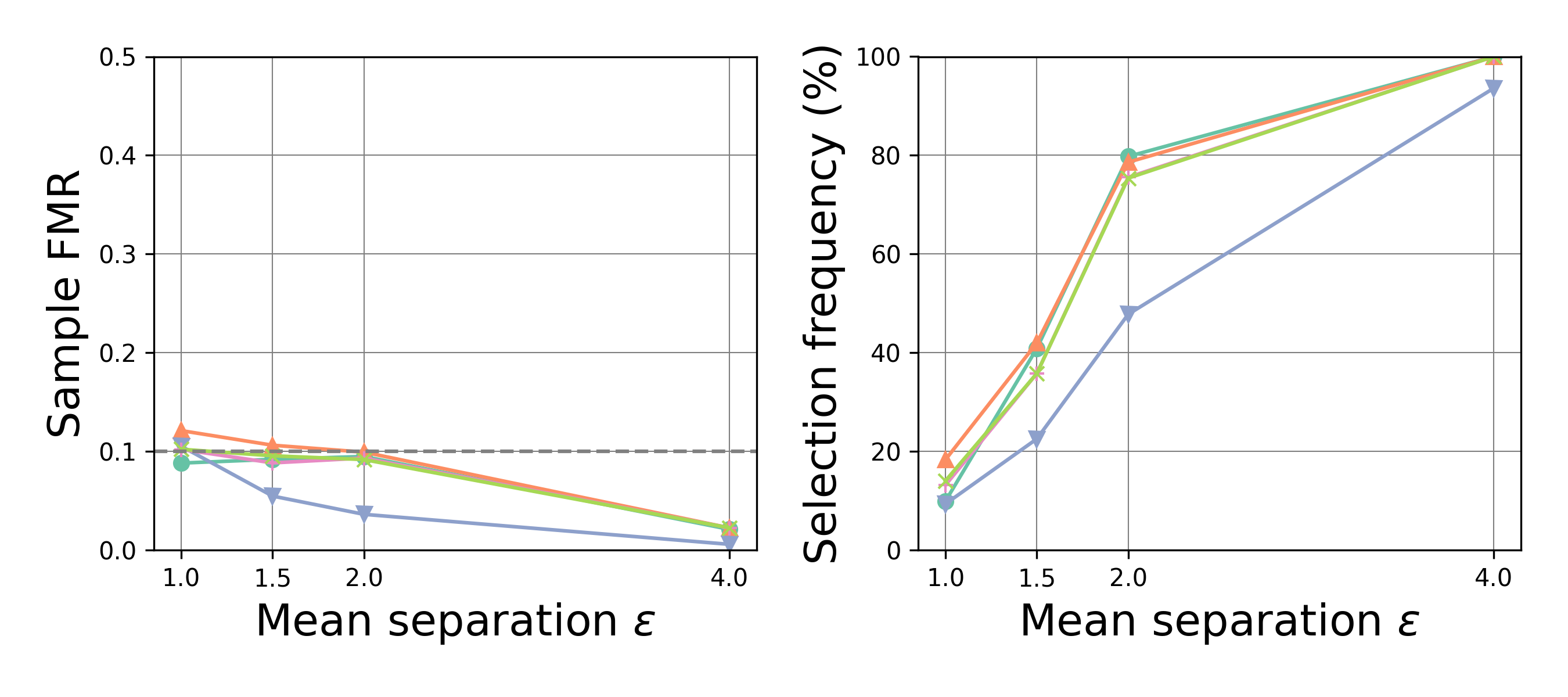}  
	\end{minipage}%
	\begin{minipage}{0.2\linewidth}
		\includegraphics[width=\linewidth]{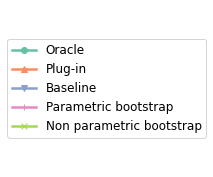}  
	\end{minipage}
\caption{FMR (left panel) and selection frequency (right panel) as a function of the mean separation $\epsilon$. Known mixture proportions and covariances setting with $Q=2$, $n=100$, $d=2$, $\alpha=0.1$.} \label{FMR:fig:semiOracle} 
\end{figure}

\paragraph{Diagonal covariances} 

In this setting, the true parameters are the same as in the previous paragraph, but the true mixture proportions and covariance matrices are unknown. However, to help the estimation, we suppose a diagonal structure for $\Sigma_1$ and $\Sigma_2$, which is used in the EM algorithm.

Figure~\ref{FMR:fig:diagepsilon} displays the FMR and the selection frequency as a function of the separation $\epsilon$. The conclusion is qualitatively the same as in the previous case, but with larger FMR values for a weak separation. 
Overall, it shows that the plug-in procedure is anti-conservative and that the bootstrap corrections are able to recover an FMR and a selection frequency close to the one of the oracle. However, for a weak separation, namely $\epsilon=1$, the parametric bootstrap correction is not enough and the latter procedure still overshoots the nominal level $\alpha$. 
Indeed, in our simulations, it appears that $P_{\hat{\theta}}$ is often a distribution that is more favorable than  $P_{\theta^*}$ from a statistical point of view (for instance, with more separated clusters). 
These conclusions also hold for varying sample size $n$, see Figure~\ref{FMR:fig:diagsamplesize}. 

Figure \ref{FMR:fig:constrained} displays the FMR and the selection frequency for varying nominal level $\alpha$, with $\epsilon = \sqrt{2}$ and $n=200$. The plug-in is still anti-conservative, while  the bootstrap procedures have an FMR that is close to $\alpha$ uniformly on the considered $\alpha$ range. 
Moreover, we note that for all our procedures (including the plug-in), the gap between the FMR and the nominal level is roughly constant with $\alpha$: this illustrates the adaptive aspect of our procedures.
This is in contrast with the baseline procedure, for which this gap highly depends on $\alpha$, and which may be either anti-conservative or sub-optimal depending on the $\alpha$ value.

\begin{figure}
	\adjustbox{minipage=1.3em,valign=t}{\subcaption{}\label{FMR:fig:diagepsilon}}%
	\begin{minipage}{0.745\linewidth}
		\centering
		\includegraphics[width=0.7\linewidth]{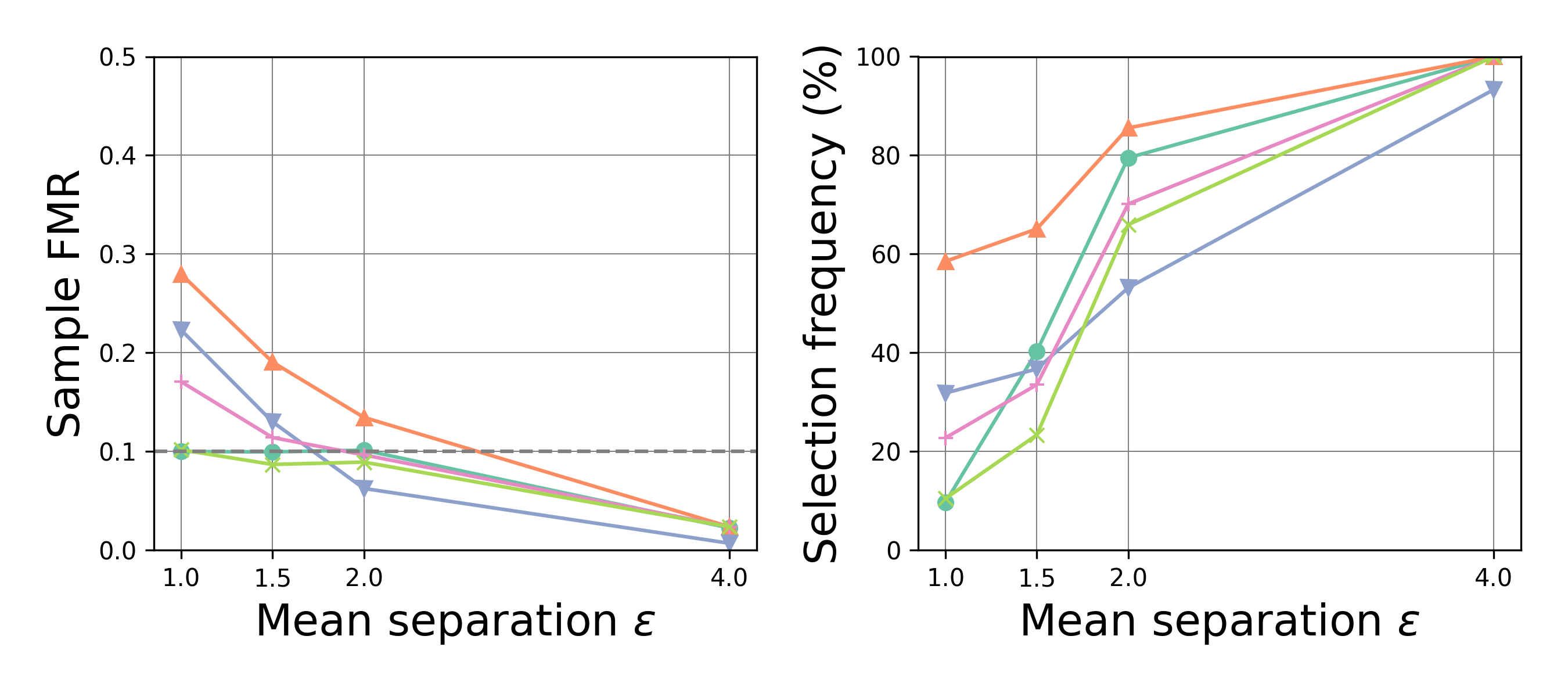}  
	\end{minipage}%
	\begin{minipage}{0.25\linewidth}
	\hspace{\fill}
	\end{minipage}
	
	\adjustbox{minipage=1.3em,valign=t}{\subcaption{}\label{FMR:fig:diagsamplesize}}%
	\begin{minipage}{0.745\linewidth}
		\centering
		\includegraphics[width=0.7\linewidth]{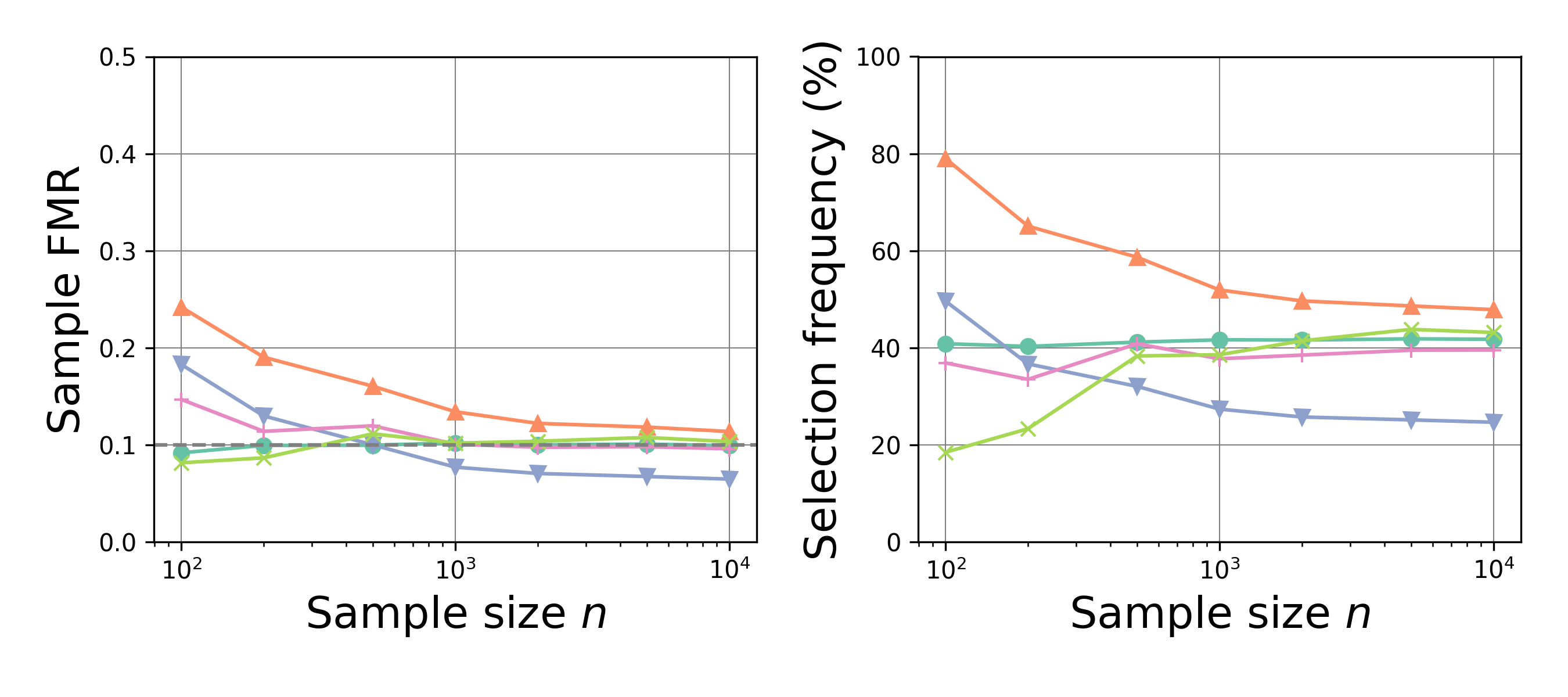}  
	\end{minipage}%
	\begin{minipage}{0.25\linewidth}
		\includegraphics[width=\linewidth]{legend.png}  
	\end{minipage}
	
	\adjustbox{minipage=1.3em,valign=t}{\subcaption{}\label{FMR:fig:constrained}}%
	\begin{minipage}{0.745\linewidth}
		\centering
		\includegraphics[width=0.7\linewidth]{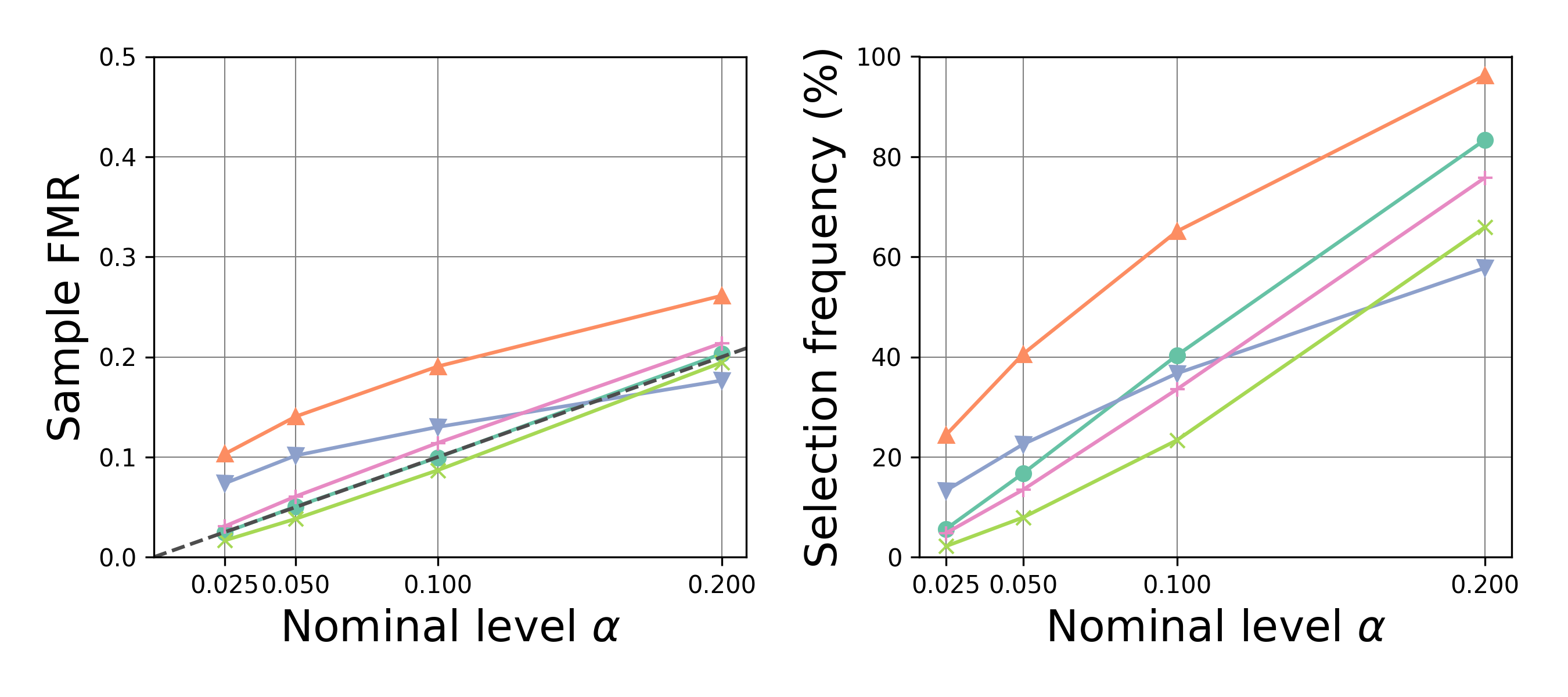}  
	\end{minipage}%
	\begin{minipage}{0.25\linewidth}
	\hspace{\fill}
	\end{minipage}
	
	\caption{FMR (left panel) and selection frequency (right panel) as a function of: (a) the mean separation; (b) the sample size $n$; (c) the nominal level $\alpha$. Diagonal covariances setting with  $Q=2$, $d=2$. Default settings are: $n=200$, $\alpha=0.1$, $\epsilon= \sqrt{2}$.} 
\end{figure}

\paragraph{Three-component mixture} 

We next increase the number of classes to $Q=3$. Figure \ref{FMR:fig:Q3} displays the FMR and the selection frequency for varying $\alpha$, with a mean separation $\| \mu_1 - \mu_2 \|_2=\| \mu_1 - \mu_3 \|_2=\| \mu_2 - \mu_3 \|_2=\epsilon =3$. The mean separation is chosen so that the selection frequency of the oracle rule is approximately the same as in the previous paragraph. The increase in $Q$ leads to a deterioration of the performances. Specifically, the FMR of the plug-in overshoots the nominal level by a large amount, and when $n$ is too small, the parametric bootstrap procedure can be anti-conservative while the non parametric bootstrap is over-conservative.
This deterioration is expected since from the theory established in Section \ref{FMR:sec:theory} (see Theorem \ref{FMR:th:convergence_rates}), the residual terms increase with $Q$, and since the difficulty of the estimation is also increased. 
However, for  a fairly large sample size ($n=1000$), both bootstrap procedures are correctly mimicking the oracle.

\begin{figure}
	\begin{minipage}{\linewidth}
		\centering
		\includegraphics[width=0.9\linewidth]{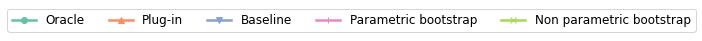}  
	\end{minipage}
	\begin{minipage}{0.5\linewidth}
		\centering
		\includegraphics[width=\linewidth]{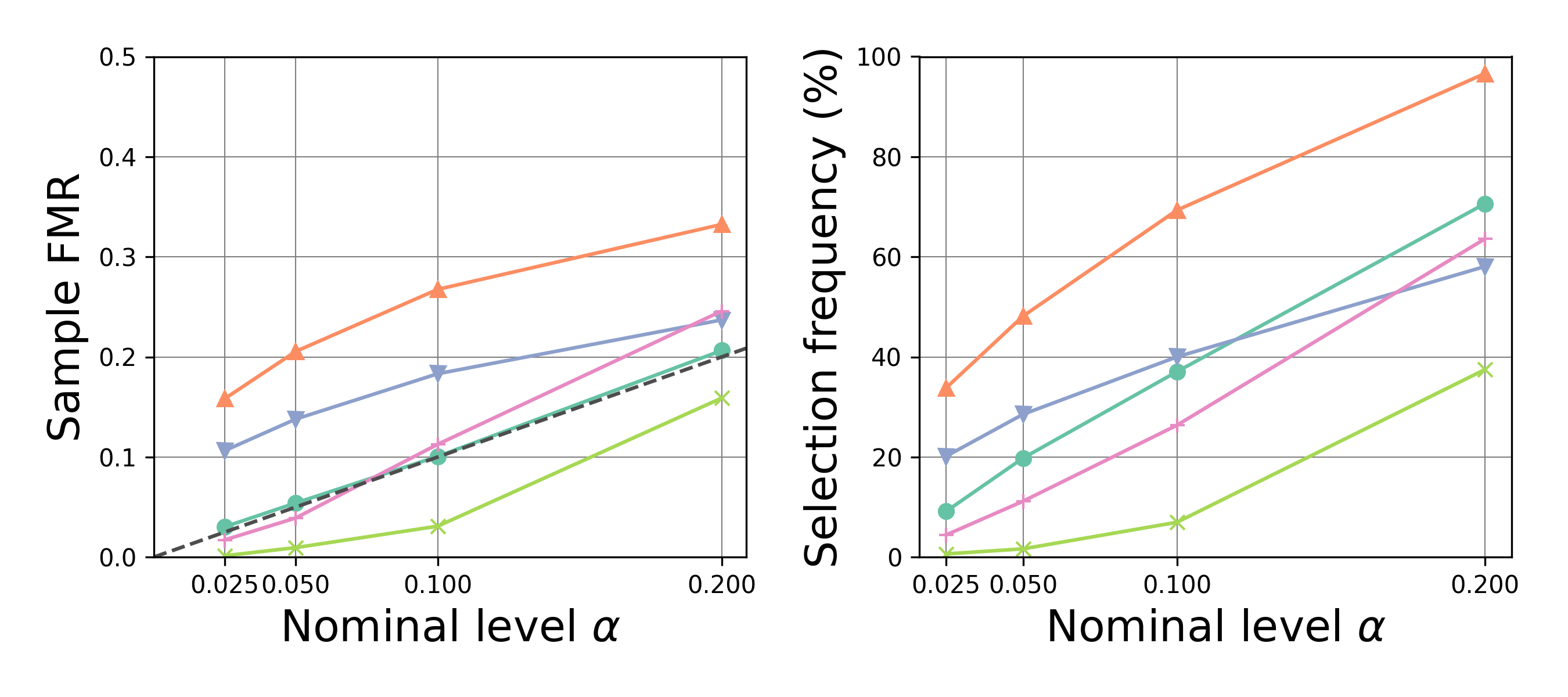}  
		\subcaption{$n=200$}
	\end{minipage}%
	\begin{minipage}{0.5\linewidth}
		\centering
		\includegraphics[width=\linewidth]{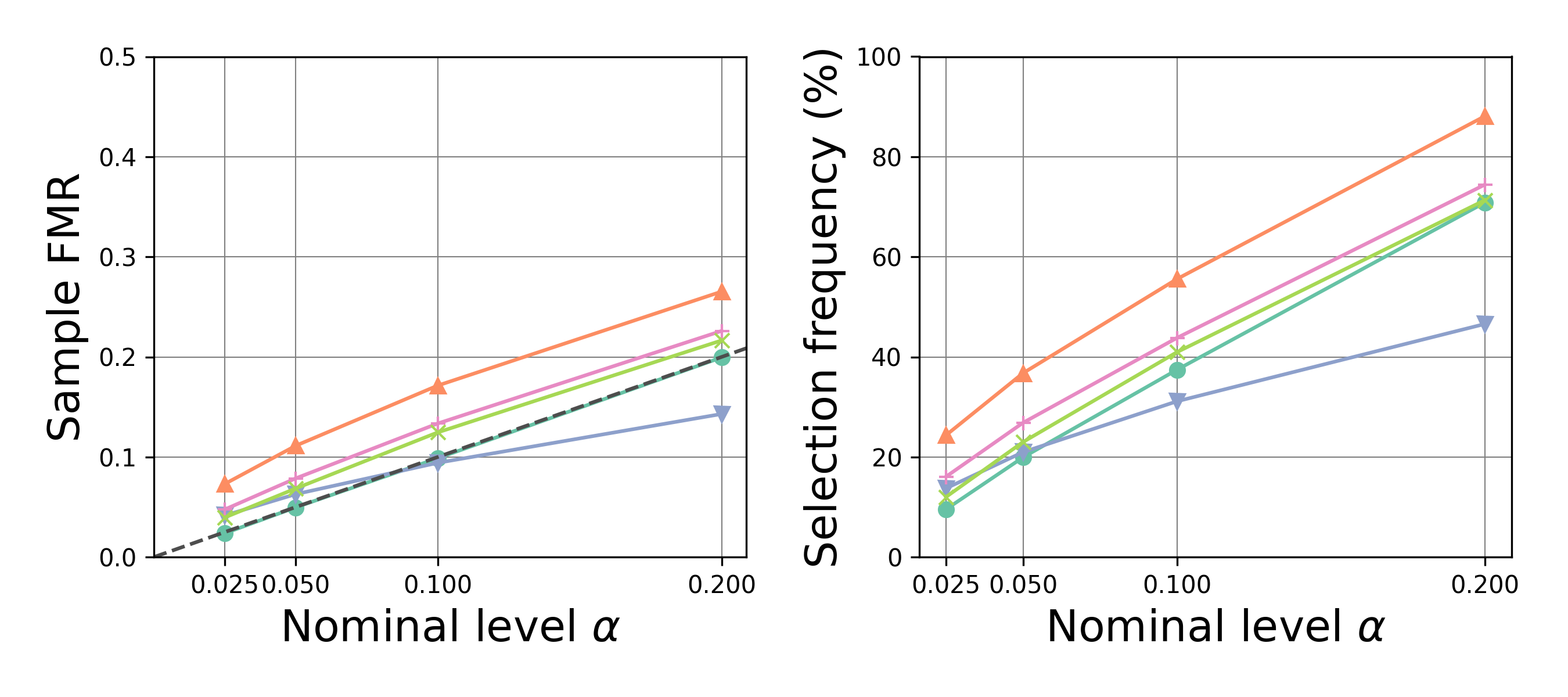}  
		\subcaption{$n=1000$}
	\end{minipage}
		\caption{FMR (left panel) and selection frequency (right panel) as a function of the nominal level $\alpha$. Diagonal covariances setting with $Q=3$, $d=2$, $\epsilon=3$.} \label{FMR:fig:Q3} 
\end{figure}

\paragraph{Larger dimension} 
	
	We now increase the dimension to $d=10$. In that case, parameter estimation is deteriorated. In particular, the maximum posterior probability for any point tends to be very over-estimated. To remedy this issue, we project the data onto a two-dimensional space using PCA. We then apply the EM algorithm to the projected data. This is similar in spirit to spectral clustering and it has the added benefit of combining the objectives of data reduction with clustering. Results are displayed in Figure \ref{FMR:fig:dim}. The conclusions are qualitatively the same as in the previous paragraph.

\begin{figure}
	\begin{minipage}{\linewidth}
		\centering
		\includegraphics[width=0.9\linewidth]{legend2.png}  
	\end{minipage}
	\begin{minipage}{0.5\linewidth}
		\centering
		\includegraphics[width=\linewidth]{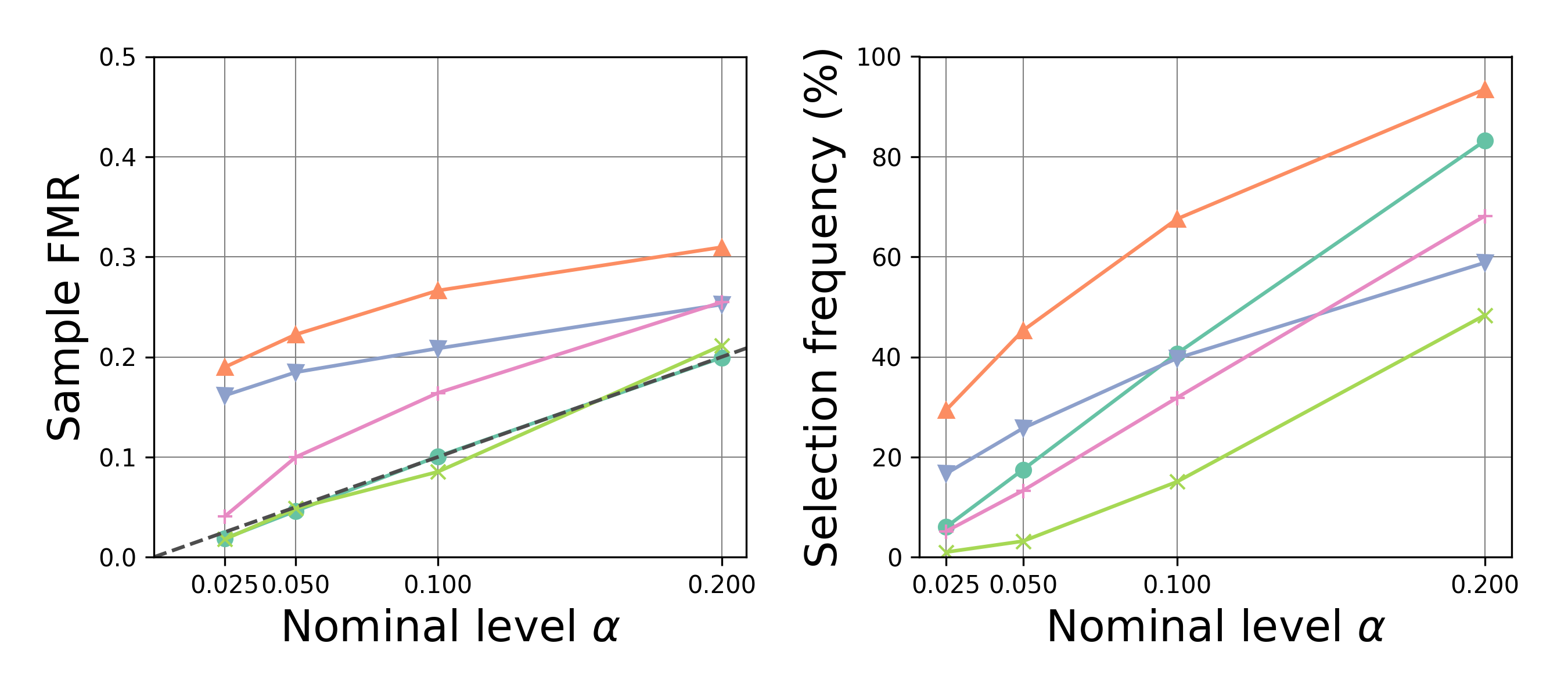}  
		\subcaption{$n=200$}
	\end{minipage}%
	\begin{minipage}{0.5\linewidth}
		\centering
		\includegraphics[width=\linewidth]{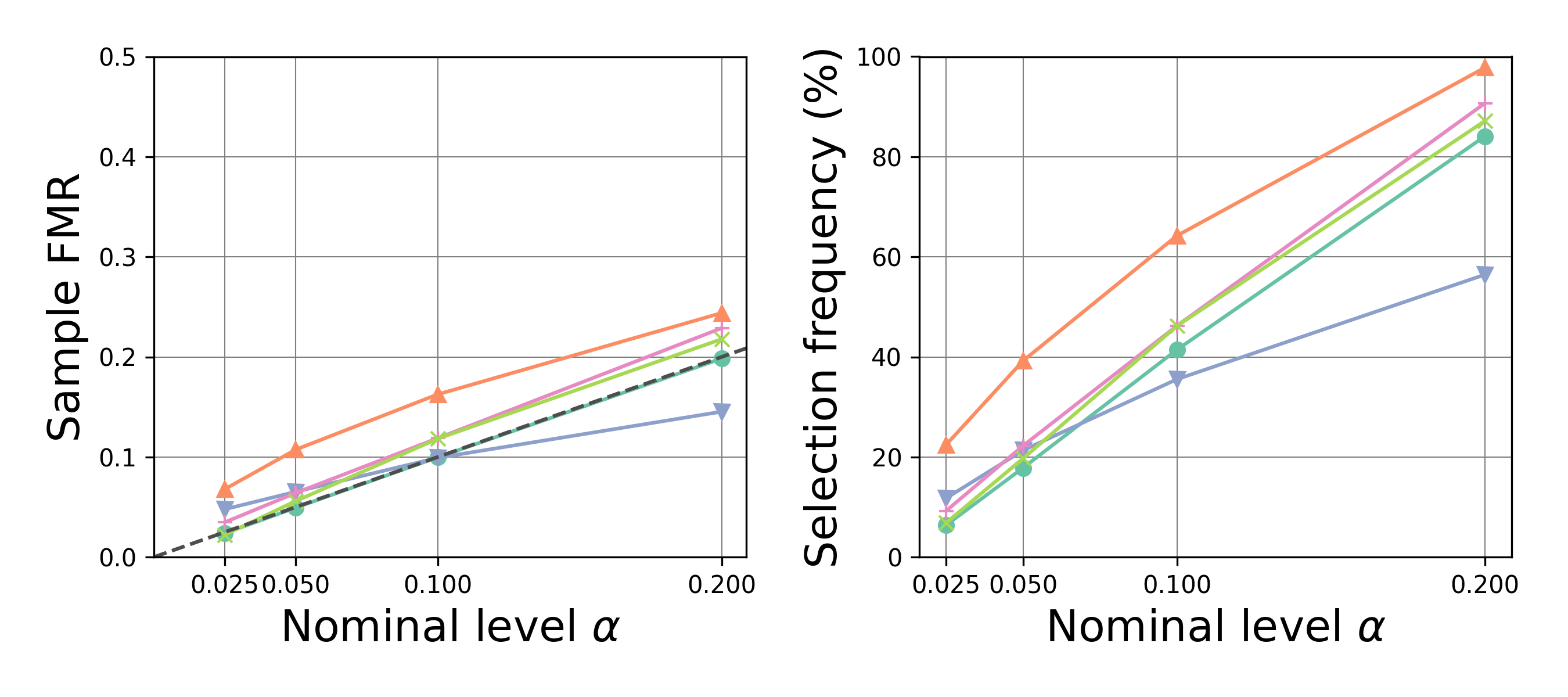}  
		\subcaption{$n=1000$}
	\end{minipage}
	\caption{FMR (left panel) and selection frequency (right panel) as a function of the nominal level $\alpha$. Diagonal covariances setting with $Q=2$, $d=10$, $\epsilon=\sqrt{2}$.} \label{FMR:fig:dim} 
\end{figure}

\subsection{Real data set} \label{FMR:sec:realdata}

We consider the Wisconsin Breast Cancer Diagnosis (WDBC) dataset from the UCI ML repository. The data consists of features computed from a digitalized image of a fine needle aspirate (FNA) of a breast mass, on a total of 569 patients (each corresponds to one FNA sample) of which 212 are diagnosed as Benign and 357 as Malignant. Ten real-valued measures were computed for each of the cell nucleus present in the images (e.g. radius, perimeter, texture, etc.). Then, the mean, standard error and mean of the three largest values of these measures were computed for each image, resulting in a total of 30 features. Here, we restrict the analysis to the variables that correspond to the means of these measures. 

We choose to model the data as a mixture of Student's t-distributions as proposed in \cite{peel00}. Student mixtures are appropriate for data  containing observations with longer than normal tails or atypical observations leading to  overlapping  clusters. 
Compared to Gaussian mixtures, Students are less concentrated and thus produce estimates of the posterior probabilities of class memberships that are  less extreme, which is favorable for our selection procedures.
In our study, the degree of freedom of each component is set to 4, and  no constraints are put on the rest of the parameters. The t-mixture is fit via the EM algorithm provided by the Python package \texttt{studenttmixture} \citep{peel00}.  


To start with, we restrict the analysis to the first two variables of the dataset, the mean radius and the mean texture of the images. For illustration, Figure \ref{FMR:fig:app} (panel (a)) displays the data. Different colors indicate the ground truth labels (this information is not used in the clustering).
One can see that  the Student approximation is fairly good for each of the groups, and   there is some overlap between them.   Figure \ref{FMR:fig:app} (panel (b)) displays the MAP clustering result for the t-mixture model without any selection. The FMR is computed with respect to the ground truth labels and amounts to 14 \%. Figure \ref{FMR:fig:app} (panel (c)) provides the result of our parametric bootstrap procedure with nominal level $\alpha = 5\%$. The procedure does not classify points that are at the intersection of the clusters, resulting in the classification of 70\% of the data, and the FMR equals 3\%, which is below the target level.

\begin{figure}
\begin{minipage}[t]{0.33\linewidth}
	\centering
	\includegraphics[width=\linewidth]{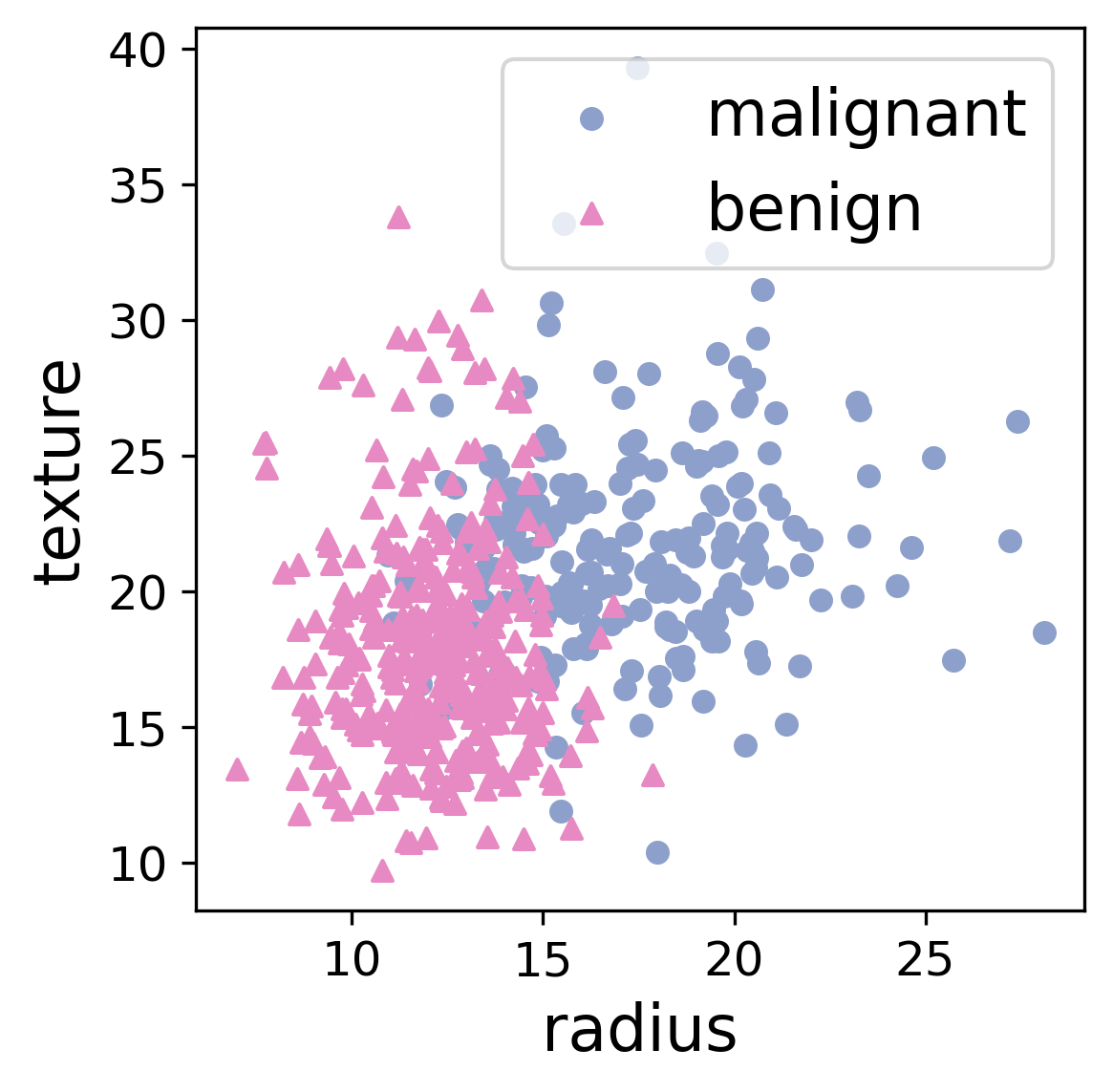}
	\subcaption{Ground truth labels} 
\end{minipage}
\begin{minipage}[t]{0.33\linewidth}
	\centering
	\includegraphics[width=\linewidth]{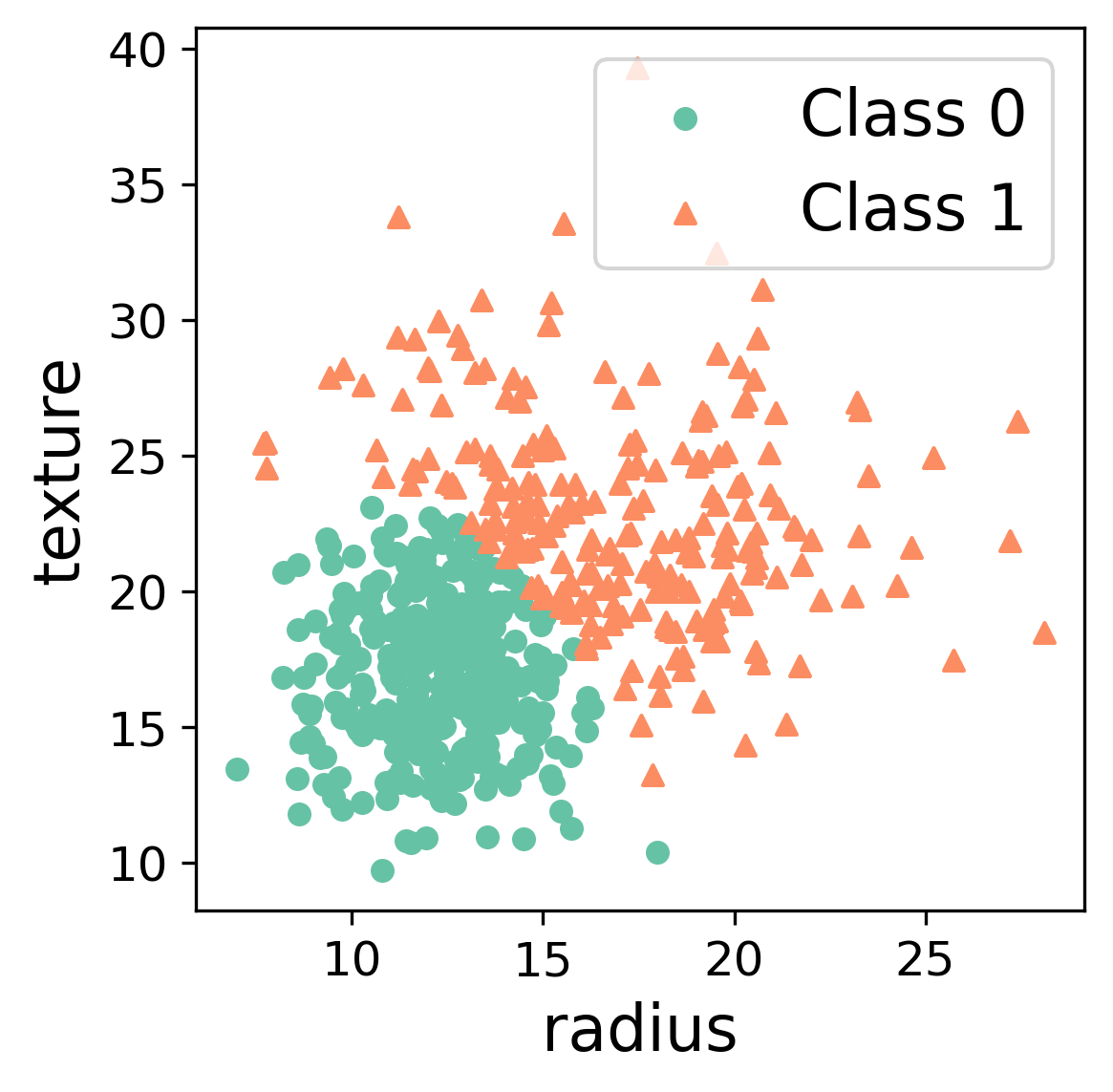}
	\subcaption{MAP clustering} 
\end{minipage}%
\begin{minipage}[t]{0.33\linewidth}
	\centering
	\includegraphics[width=\linewidth]{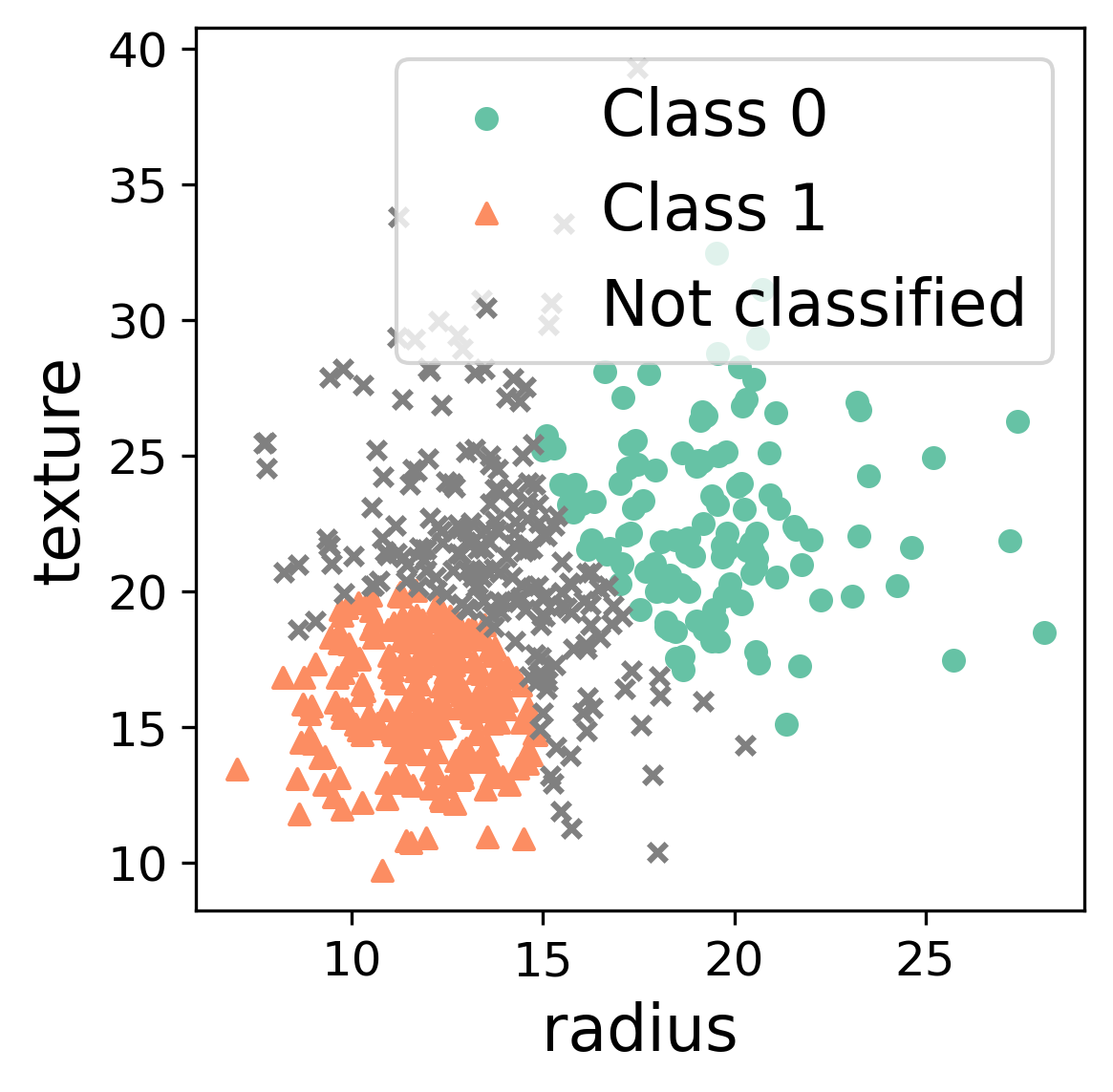}
	\subcaption{Bootstrap procedure} 
	\end{minipage}
\caption{Comparison of the clustering result using t-mixture modelling with ground truth labels on the WDBC dataset, restricted to the variables \textit{radius} and \textit{texture}, with and without selection. With the parametric bootstrap procedure applied at $\alpha = 5\%$, the FMR w.r.t. the ground truth labels is of 3\% versus 14\% without selection. }  \label{FMR:fig:app}
\end{figure}

\begin{figure}
	\begin{minipage}[t]{0.33\linewidth}
		\centering
		\includegraphics[width=\linewidth]{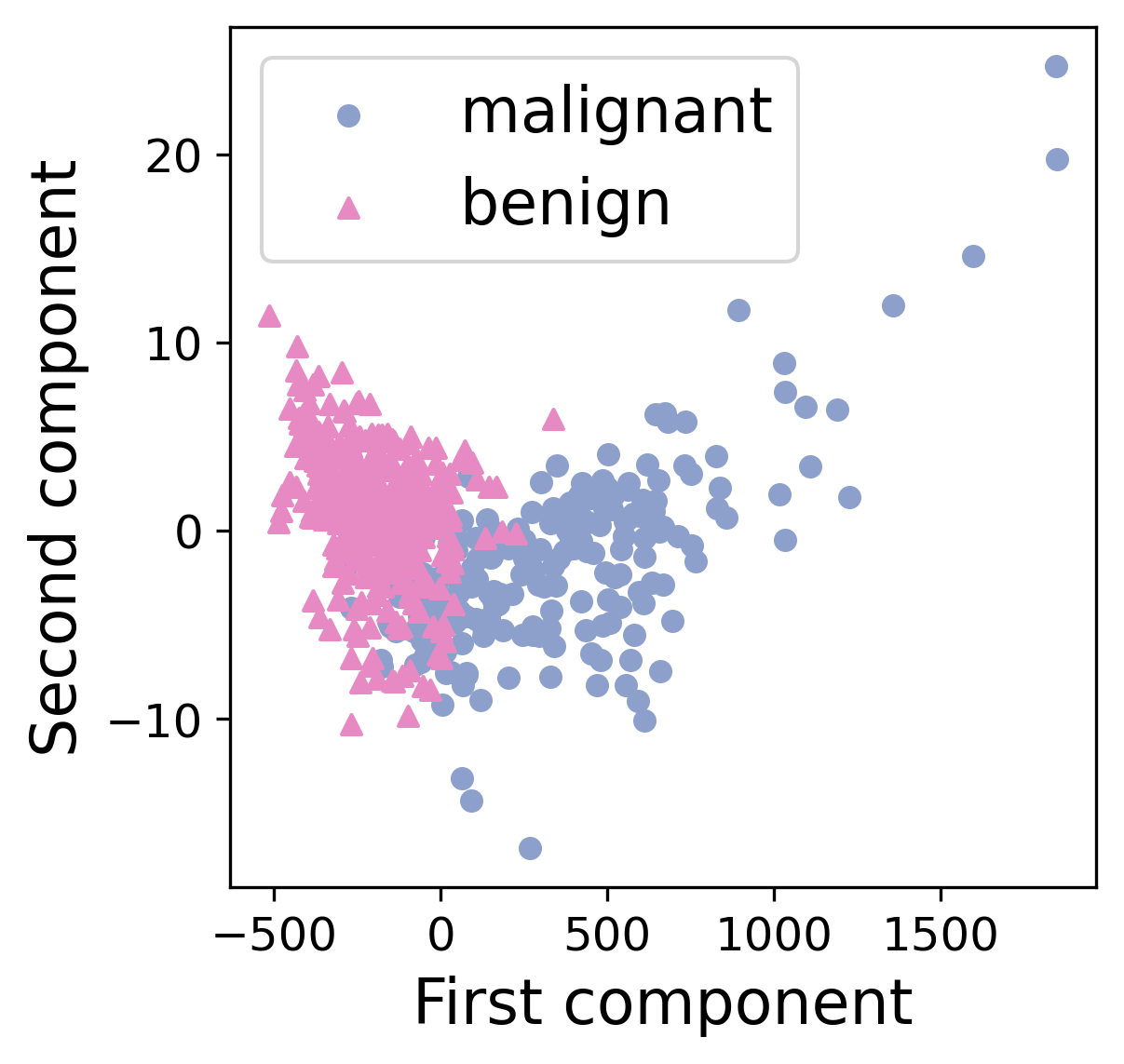}
		\subcaption{Ground truth labels} 
	\end{minipage}
	\begin{minipage}[t]{0.33\linewidth}
		\centering
		\includegraphics[width=\linewidth]{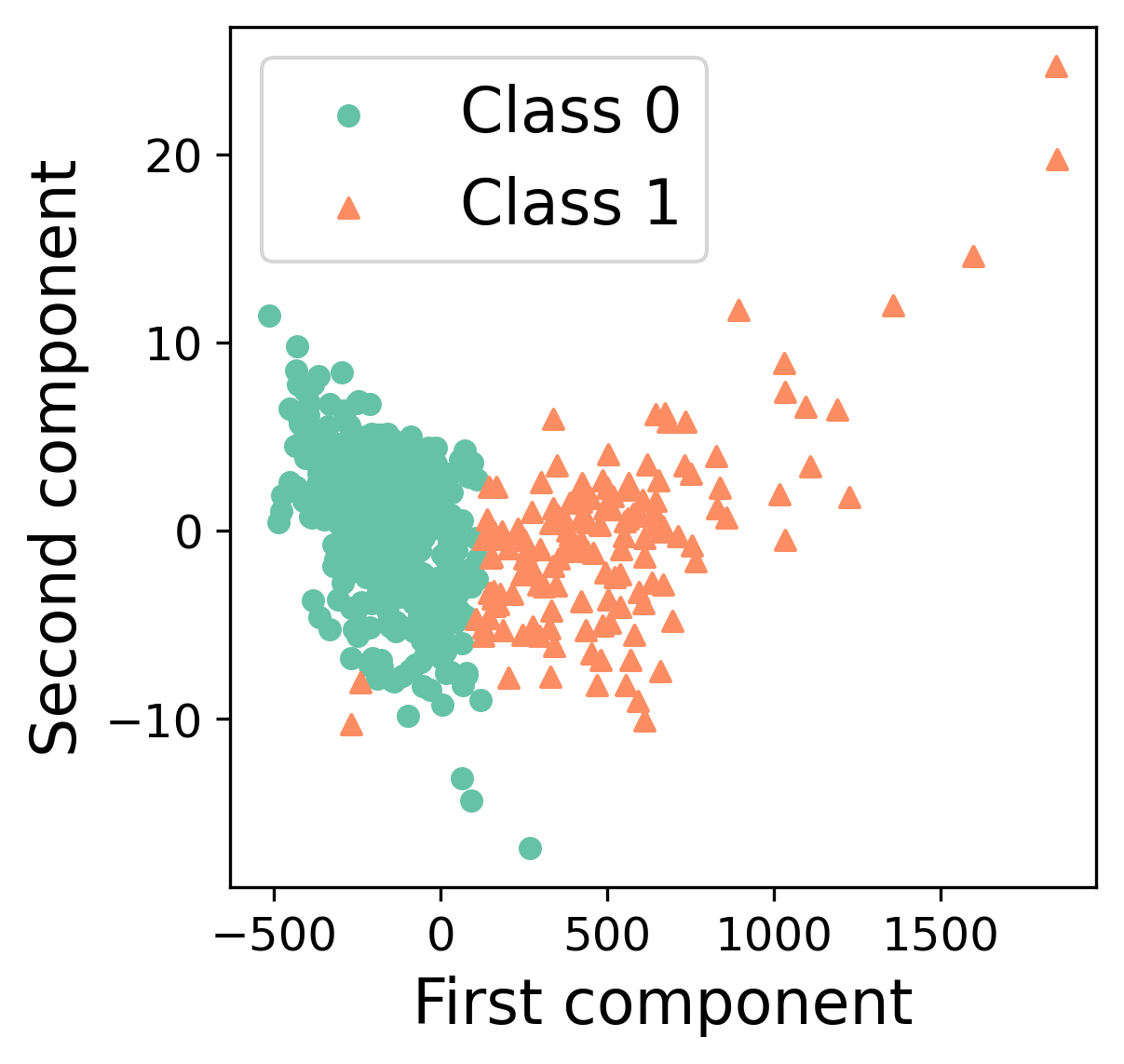}
		\subcaption{MAP clustering} 
	\end{minipage}%
	\begin{minipage}[t]{0.33\linewidth}
		\centering
		\includegraphics[width=\linewidth]{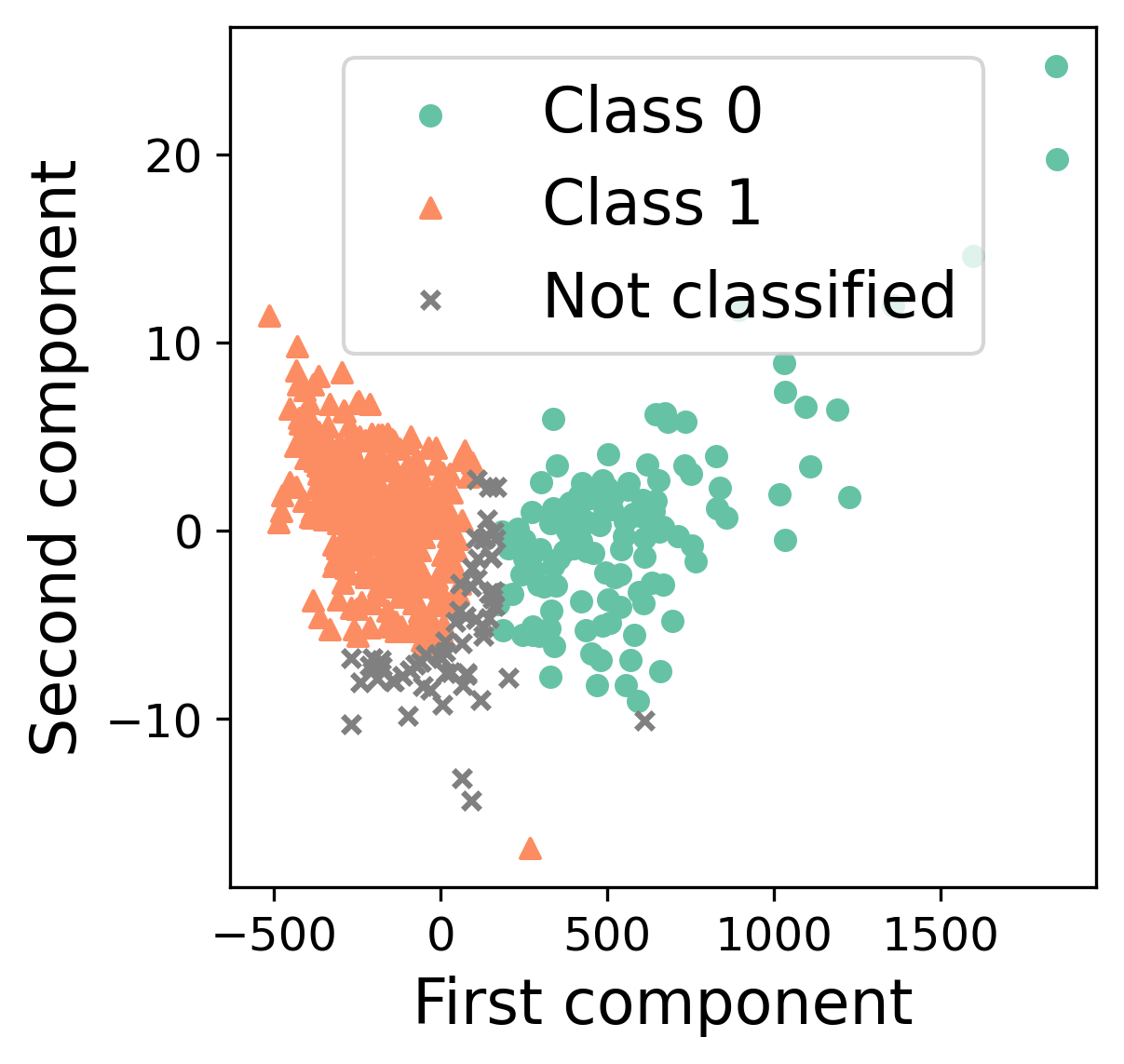}
		\subcaption{Bootstrap procedure} 
	\end{minipage}
	\caption{Comparison of the clustering result using PCA and t-mixture modelling with ground truth labels on the WDBC dataset, restricted to the first ten variables, with and without selection. With the parametric bootstrap procedure applied at $\alpha = 5\%$, the FMR w.r.t. the ground truth labels is of 10\% versus 14\% without selection. }  \label{FMR:fig:app10D}
\end{figure}

Finally, Figure \ref{FMR:fig:app10D} displays the results when restricting the analysis to the first ten variables of the dataset and applying PCA to reduce the dimension to $2$. In that case, the FMR computed with respect to the ground truth labels without selection is 14 \%, while using the bootstrap procedures, this reduces to 10 \%, with a selection frequency of 80\%.

\section{Conclusion and discussion}\label{FMR:sec:discussion}


We have presented new data-driven methods providing both clustering and selection that ensure an FMR control guarantee in a mixture model. The plug-in approach was shown to be theoretically valid both when the parameter estimation is accurate and the sample size is large enough. When this is not necessarily the case, we proposed two second-order bootstrap corrections that have been shown to increase the FMR control ability on numerical experiments.  Finally, applying our unsupervised methods to a supervised data set, our approach has been qualitatively validated by considering the attached labels as revealing the true clusters.

We underline that the cluster number $Q$ is assumed to be fixed and known throughout the study. 
In practice, it can be fitted from the data by using the standard AIC or BIC criteria, using the entire data before application of the selection rule. In addition, if several values of $Q$ make sense from a practical viewpoint, we recommend to provide to the practitioner the collection of the corresponding outputs.

Concerning the pure task of controlling the FMR in the mixture model, our methods provide a correct FMR control in some region of the parameter space, leaving other less favorable parameter configurations with a slight inflation in the FMR level. 
This phenomenon is well known for FDR control in the two-component mixture multiple testing model  \citep{SC2007,roquain2022false}, and facing a similar problem in our framework is well expected. 
On the one hand, in some cases, this problem  can certainly be solved by improving on parameter estimation: here the EM algorithm seems to over-estimate the extreme posterior probabilities, which makes the plug-in procedure too anti-conservative. On the other hand, it could be hopeless to expect a robust FMR control uniformly valid over all configurations, while being optimal in the favorable cases. To illustrate that point, we refer to the work \cite{roquain2022false} that shows that such a procedure does not exist in the FDR controlling case, when the null distribution is Gaussian with an unknown scaling parameter (which is a framework sharing similarities with the one considered here).  Investigating such a ``lower bound'' result in the current setting would provide better guidelines for the practitioner and is therefore an interesting direction for future research.  
In addition, in these unfavorable cases, adding labeled samples and considering a semi-supervised framework can be an appropriate alternative for practical use. This new sample is likely to considerably improve the inference. Studying the FMR control in that setting is another promising avenue.

\section*{Acknowledgments}
This work has been supported by ANR-16-CE40-0019 (SansSouci), ANR-17-CE40-0001 (BASICS) and by the GDR ISIS through the ``projets exploratoires'' program (project TASTY). A. Marandon has been supported by a grant from R\'egion \^Ile-de-France (``DIM Math Innov'').
We would like to thank Gilles Blanchard and St\'ephane Robin for interesting discussions. We are also grateful to Eddie Aamari and Nhat Ho for their help for proving Lemma~\ref{lem:wasserstein}.

\bibliographystyle{apalike}
\bibliography{bibliography}

\begin{thebibliography}{}

\bibitem[Abbe, 2018]{abbe18}
Abbe, E. (2018).
\newblock Community detection and stochastic block models: Recent developments.
\newblock {\em Journal of Machine Learning Research}, 18(177):1--86.

\bibitem[Abraham et~al., 2022]{abraham2022empirical}
Abraham, K., Castillo, I., and Roquain, {\'E}. (2022).
\newblock Empirical bayes cumulative $\ell$-value multiple testing procedure
  for sparse sequences.
\newblock {\em Electronic Journal of Statistics}, 16(1):2033--2081.

\bibitem[Angelopoulos et~al., 2021]{angelopoulos2021learn}
Angelopoulos, A.~N., Bates, S., Cand{\`{e}}s, E.~J., Jordan, M.~I., and Lei, L.
  (2021).
\newblock Learn then test: Calibrating predictive algorithms to achieve risk
  control.
\newblock {\em CoRR}, abs/2110.01052.

\bibitem[Arora and Kannan, 2005]{arora05}
Arora, S. and Kannan, R. (2005).
\newblock {Learning mixtures of separated nonspherical Gaussians}.
\newblock {\em The Annals of Applied Probability}, 15(1A):69 -- 92.

\bibitem[Arthur and Vassilvitskii, 2006]{arthur2006k}
Arthur, D. and Vassilvitskii, S. (2006).
\newblock k-means++: The advantages of careful seeding.
\newblock Technical report, Stanford.

\bibitem[Azizyan et~al., 2013]{azizyan13}
Azizyan, M., Singh, A., and Wasserman, L. (2013).
\newblock Minimax theory for high-dimensional gaussian mixtures with sparse
  mean separation.
\newblock In {\em Proceedings of the 26th International Conference on Neural
  Information Processing Systems - Volume 2}, NIPS'13, page 2139–2147, Red
  Hook, NY, USA. Curran Associates Inc.

\bibitem[Balakrishnan et~al., 2017]{balakrishnan17}
Balakrishnan, S., Wainwright, M.~J., and Yu, B. (2017).
\newblock {Statistical guarantees for the EM algorithm: From population to
  sample-based analysis}.
\newblock {\em The Annals of Statistics}, 45(1):77 -- 120.

\bibitem[Baraud, 2016]{baraud2016bounding}
Baraud, Y. (2016).
\newblock Bounding the expectation of the supremum of an empirical process over
  a (weak) vc-major class.
\newblock {\em Electronic journal of statistics}, 10(2):1709--1728.

\bibitem[Bartlett and Wegkamp, 2008]{bartlett08}
Bartlett, P.~L. and Wegkamp, M.~H. (2008).
\newblock Classification with a reject option using a hinge loss.
\newblock {\em Journal of Machine Learning Research}, 9(59):1823--1840.

\bibitem[Benjamini and Hochberg, 1995]{BH1995}
Benjamini, Y. and Hochberg, Y. (1995).
\newblock Controlling the false discovery rate: a practical and powerful
  approach to multiple testing.
\newblock {\em J. Roy. Statist. Soc. Ser. B}, 57(1):289--300.

\bibitem[Cai et~al., 2019]{CSWW2019}
Cai, T., Sun, W., and Wang, W. (2019).
\newblock Covariate-assisted ranking and screening for large-scale two-sample
  inference.
\newblock {\em Journal of the Royal Statistical Society: Series B (Statistical
  Methodology)}, 81(2):187--234.

\bibitem[Chow, 1970]{chow70}
Chow, C. (1970).
\newblock On optimum recognition error and reject tradeoff.
\newblock {\em IEEE Transactions on Information Theory}, 16(1):41--46.

\bibitem[Chretien et~al., 2019]{chretien19}
Chretien, S., Dombry, C., and Faivre, A. (2019).
\newblock {The Guedon-Vershynin Semi-Definite Programming approach to low
  dimensional embedding for unsupervised clustering}.
\newblock {\em {Frontiers in Applied Mathematics and Statistics}}.

\bibitem[Dempster et~al., 1977]{dempster77}
Dempster, A.~P., Laird, N.~M., and Rubin, D.~B. (1977).
\newblock Maximum likelihood from incomplete data via the em algorithm.
\newblock {\em Journal of the Royal Statistical Society. Series B
  (Methodological)}, 39(1):1--38.

\bibitem[Denis and Hebiri, 2020]{denis20}
Denis, C. and Hebiri, M. (2020).
\newblock Consistency of plug-in confidence sets for classification in
  semi-supervised learning.
\newblock {\em Journal of Nonparametric Statistics}, 32(1):42--72.

\bibitem[Efron et~al., 2001]{ETST2001}
Efron, B., Tibshirani, R., Storey, J.~D., and Tusher, V. (2001).
\newblock Empirical bayes analysis of a microarray experiment.
\newblock {\em Journal of the American Statistical Association},
  96(456):1151--1160.

\bibitem[Geifman and El-Yaniv, 2017]{geifman17}
Geifman, Y. and El-Yaniv, R. (2017).
\newblock Selective classification for deep neural networks.
\newblock In {\em Proceedings of the 31st International Conference on Neural
  Information Processing Systems}, NIPS'17, page 4885–4894, Red Hook, NY,
  USA. Curran Associates Inc.

\bibitem[Giraud and Verzelen, 2018]{giraud18}
Giraud, C. and Verzelen, N. (2018).
\newblock {Partial recovery bounds for clustering with the relaxed $K$-means}.
\newblock {\em {Mathematical Statistics and Learning}}, 1(3):317--374.

\bibitem[Herbei and Wegkamp, 2006]{herbei06}
Herbei, R. and Wegkamp, M.~H. (2006).
\newblock Classification with reject option.
\newblock {\em The Canadian Journal of Statistics / La Revue Canadienne de
  Statistique}, 34(4):709--721.

\bibitem[Ho and Nguyen, 2016]{ho16}
Ho, N. and Nguyen, X. (2016).
\newblock {On strong identifiability and convergence rates of parameter
  estimation in finite mixtures}.
\newblock {\em Electronic Journal of Statistics}, 10(1):271 -- 307.

\bibitem[Lei and Rinaldo, 2015]{LR2015}
Lei, J. and Rinaldo, A. (2015).
\newblock {Consistency of spectral clustering in stochastic block models}.
\newblock {\em The Annals of Statistics}, 43(1):215 -- 237.

\bibitem[Lu and Zhou, 2016]{lu2016statistical}
Lu, Y. and Zhou, H.~H. (2016).
\newblock Statistical and computational guarantees of lloyd's algorithm and its
  variants.
\newblock {\em arXiv preprint arXiv:1612.02099}.

\bibitem[Mary-Huard et~al., 2021]{maryhuard21}
Mary-Huard, T., Perduca, V., Blanchard, G., and Marie-Laure, M.-M. (2021).
\newblock Error rate control for classification rules in multiclass mixture
  models.

\bibitem[Massart, 2007]{massart07}
Massart, P. (2007).
\newblock {\em Concentration Inequalities and Model Selection Ecole d'Eté de
  Probabilités de Saint-Flour XXXIII - 2003}.
\newblock École d'Été de Probabilités de Saint-Flour, 1896. Springer Berlin
  Heidelberg, Berlin, Heidelberg, 1st ed. 2007. edition.

\bibitem[Melnykov, 2013]{melnykov2013distribution}
Melnykov, V. (2013).
\newblock On the distribution of posterior probabilities in finite mixture
  models with application in clustering.
\newblock {\em Journal of Multivariate Analysis}, 122:175--189.

\bibitem[Mohri et~al., 2012]{FoundationsMLbook12}
Mohri, M., Rostamizadeh, A., and Talwalkar, A. (2012).
\newblock {\em Foundations of Machine Learning}.
\newblock The MIT Press.

\bibitem[Najafi et~al., 2020]{najafi20}
Najafi, A., Motahari, S.~A., and Rabiee, H.~R. (2020).
\newblock {Reliable clustering of Bernoulli mixture models}.
\newblock {\em Bernoulli}, 26(2):1535 -- 1559.

\bibitem[Peel and McLachlan, 2000]{peel00}
Peel, D. and McLachlan, G.~J. (2000).
\newblock Robust mixture modelling using the t distribution.
\newblock {\em Statistics and Computing}, 10(4):339--348.

\bibitem[Rebafka et~al., 2022]{rebafka22}
Rebafka, T., Roquain, {\'E}., and Villers, F. (2022).
\newblock {Powerful multiple testing of paired null hypotheses using a latent
  graph model}.
\newblock {\em Electronic Journal of Statistics}, 16(1):2796 -- 2858.

\bibitem[Regev and Vijayaraghavan, 2017]{regev17}
Regev, O. and Vijayaraghavan, A. (2017).
\newblock On learning mixtures of well-separated gaussians.
\newblock In {\em 2017 IEEE 58th Annual Symposium on Foundations of Computer
  Science (FOCS)}, pages 85--96.

\bibitem[Roquain and Verzelen, 2022]{roquain2022false}
Roquain, E. and Verzelen, N. (2022).
\newblock False discovery rate control with unknown null distribution: Is it
  possible to mimic the oracle?
\newblock {\em The Annals of Statistics}, 50(2):1095--1123.

\bibitem[Shalev-Shwartz and Ben-David, 2014]{UnderstandingMLbook14}
Shalev-Shwartz, S. and Ben-David, S. (2014).
\newblock {\em Understanding Machine Learning: From Theory to Algorithms}.
\newblock Cambridge University Press, USA.

\bibitem[Storey, 2003]{storey2003positive}
Storey, J.~D. (2003).
\newblock The positive false discovery rate: a bayesian interpretation and the
  q-value.
\newblock {\em The Annals of Statistics}, 31(6):2013--2035.

\bibitem[Sun and Cai, 2007]{SC2007}
Sun, W. and Cai, T.~T. (2007).
\newblock Oracle and adaptive compound decision rules for false discovery rate
  control.
\newblock {\em Journal of the American Statistical Association},
  102(479):901--912.

\bibitem[Vempala and Wang, 2004]{vempala04}
Vempala, S. and Wang, G. (2004).
\newblock A spectral algorithm for learning mixture models.
\newblock {\em Journal of Computer and System Sciences}, 68(4):841--860.
\newblock Special Issue on FOCS 2002.

\bibitem[Wegkamp and Yuan, 2011]{wegkamp2011support}
Wegkamp, M. and Yuan, M. (2011).
\newblock Support vector machines with a reject option.
\newblock {\em Bernoulli}, 17(4):1368--1385.

\end{thebibliography}

\newpage


\appendix



\section{Proof of Theorems \ref{consistency} and \ref{FMR:th:convergence_rates}}\label{FMR:sec:proofmain}

\subsection{A general result}

In this section, we establish a general result, from which Theorems~\ref{consistency}~and~\ref{FMR:th:convergence_rates} can be deduced.
	It provides non-asymptotic bounds on the mFMR and the FMR of the plug-in procedure and on its average selection number, by relying only on Assumption~\ref{Tcontinuity}. To state the result, we introduce some additional quantities measuring the regularity of the model which will appear in our remainder terms. 
	Recall definitions \eqref{equlvalue}, \eqref{equTXtheta} and \eqref{tstaralpha} of $\ell_q(X, \theta)$, $T(X,\theta)$ and $t^*(\alpha)$ respectively, and let for $\epsilon,\delta,v>0$,
	\begin{align}
		\mathcal{W}_{\ell}(\epsilon) &= \sup \left\{ \underset{x \in \mathbb{R}^d}{\sup} \left[\max_{1\leq q\leq Q} |  \ell_{q}(x,\theta^*)- \ell_{q}(x,\theta)| \right]\:,\: \| \theta - \theta^*\| _2 \leq \epsilon, \: \theta \in \Theta \right\}; \label{Wellbound}\\
		\mathcal{W}_T(\delta) &= \sup\{ |\Pstar (T(X, \theta^*) < t') -\Pstar (T(X, \theta^*) < t)|\:,  
		\\ &\hspace{2cm} t,t'\in [0,1], |t'-t|\leq \delta \}; \label{WTbound}\\
		\Psi(\epsilon)&=\mathcal{W}_T(\mathcal{W}_{\ell}(\epsilon)^{1/2}) + \mathcal{W}_{\ell}(\epsilon)^{1/2} ; \label{Psibound} \\
		\mathcal{W}_{t^*,\alpha}(v)&= \sup \left\{  |t^*(\alpha+\beta)-t^*(\alpha)|\:,\: |\beta|  \leq v\right\}. \label{Wtstarbound}
	\end{align}

\begin{theorem} \label{result} 
	Let Assumption~\ref{Tcontinuity} be true. For any $\alpha \in (\alpha_c, \bar{\alpha})$ and constants $s^*=s^*(\alpha,\theta^*)\in (0,1)$ and $e^*=e(\alpha,\theta^*)>0$ depending only on $\alpha$ and $\theta^*$, the following holds.
	Consider the plug-in procedure $\widehat{\proc}^{\mbox{\tiny PI}}_\alpha=(\widehat{\mathbf{Z}}^{\mbox{\tiny PI}}, \widehat{S}^{\mbox{\tiny PI}}_\alpha)$ introduced in Algorithm~\ref{algo:pi} and based on an estimator $\hat{\theta}$ satisfying Assumption~\ref{ass_estim}, with $\eta(\epsilon,\theta^*)$ defined by \eqref{equeta}.
	Then for $\epsilon\leq e^*$ and $n\geq (2e)^3$, letting
	\begin{align*}
		\Delta_n(\epsilon)=2 \left(\mathcal{W}_T( \mathcal{W}_{t^*,\alpha}(2\delta_n + 8\Psi(\epsilon)/s^*))+ 4\Psi(\epsilon)+ 2\delta_n\right),
	\end{align*}
	for $\delta_n=C\sqrt{(\log n)/n}/s^*$ where $C=2+56Q\sqrt{\VCdim} + 28Q^2\sqrt{\VCdim_{-}}$ and with the quantities $\mathcal{W}_T$, $\mathcal{W}_{\ell}$, $\Psi$, $\mathcal{W}_{t^*,\alpha}$ defined by \eqref{WTbound}, \eqref{Wellbound}, \eqref{Psibound}, \eqref{Wtstarbound}, respectively, it holds:
	\begin{itemize}
		\item The procedure $\widehat{\proc}^{\mbox{\tiny PI}}_\alpha$  controls both the FMR and the mFMR at level close to $\alpha$  in the following sense:
		\begin{align*}
			&\FMR(\widehat{\proc}^{\mbox{\tiny PI}}_{\alpha})\leq \alpha  + \Delta_n(\epsilon)/s^* + 5/n^2 + \eta(\epsilon,\theta^*); \\
			&\mFMR(\widehat{\proc}^{\mbox{\tiny PI}}_{\alpha})\leq \alpha  + \Delta_n(\epsilon)/s^* + s^{*-1}\left[50/n^2 + 10 \eta(\epsilon,\theta^*)\right].
		\end{align*}
		\item The procedure $\widehat{\proc}^{\mbox{\tiny PI}}_\alpha$ is nearly optimal in the following sense:
		for any other procedure $\proc = (\widehat{\mathbf{Z}}, S) $ that controls the mFMR at level $\alpha$, 
		\begin{align*}
			&n^{-1}\esp_{\theta^*}(|\widehat{S}^{\mbox{\tiny PI}}_{\alpha}|)\geq n^{-1} \esp_{\theta^*}(|S|) - \Delta_n(\epsilon).
		\end{align*}
	\end{itemize}
\end{theorem}

Before proving this result (which will be done in the next subsections), let us first show that Theorem~\ref{result} implies Theorems~\ref{consistency}~and~\ref{FMR:th:convergence_rates}.

\paragraph{Proof of Theorem~\ref{consistency}}
By Lemma~\ref{lemconsistW} below,  $\Delta_n(\epsilon)$ tends to $0$ when $n$ tends to infinity and $\epsilon$ tends to $0$. Moreover, by consistency of $\hat{\theta}$,  $\eta(\epsilon,\theta^*)$ tends to $0$ for all $\epsilon>0$. This implies the result.

\begin{lemma}\label{lemconsistW}
	Under Assumption~\ref{Tcontinuity}, we have $\lim_{\delta\to 0} \mathcal{W}_T(\delta) = 0$, $\lim_{v\to 0} \mathcal{W}_{t^*,\alpha}(v)=0$. Under Assumption~\ref{densitycontinuity}, we have $\lim_{\epsilon\to 0} \mathcal{W}_{\ell}(\epsilon) = 0$. Under both assumptions, we have $\lim_{\epsilon\to 0} \Psi(\epsilon) = 0$.
\end{lemma}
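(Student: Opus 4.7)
The plan is to treat the four limits in sequence, since the first three are essentially independent and the fourth follows by combining two of them.

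First, for $\lim_{\delta\to 0}\mathcal{W}_T(\delta)=0$, I would invoke Heine--Cantor. The function $g(t)=\Pstar(T(X,\theta^*)<t)$ is continuous on $[0,1]$ by Assumption~\ref{Tcontinuity}, and $[0,1]$ is compact, hence $g$ is uniformly continuous; this gives exactly the claim that the oscillation $\mathcal{W}_T(\delta)$ of $g$ on intervals of length $\leq\delta$ tends to $0$.

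Second, for $\lim_{\epsilon\to 0}\mathcal{W}_\ell(\epsilon)=0$, I would argue by contradiction combined with dominated convergence. For $P_{\theta^*}$-almost every $x$, the denominator $\sum_\ell\pi_\ell f_{\phi_\ell}(x)$ is bounded away from $0$ on a sufficiently small neighborhood of $\theta^*$ in $\Theta$ (it is continuous at $\theta^*$ by Assumption~\ref{densitycontinuity} and positive since $x$ comes from the mixture), and so $\theta\mapsto \ell_q(x,\theta)$ is continuous at $\theta^*$. Thus $\max_q|\ell_q(X,\theta)-\ell_q(X,\theta^*)|\to 0$ a.s.\ whenever $\theta\to\theta^*$. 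If the claim failed, one could extract a sequence $\theta_n\to\theta^*$ in the ball of radius $\epsilon_n\to 0$ with $\esp_{\theta^*}[\max_q|\ell_q(X,\theta_n)-\ell_q(X,\theta^*)|]$ bounded below; but the integrand is bounded by $1$, so Lebesgue's dominated convergence theorem yields a contradiction. This is the step I expect to require the most care, because one must justify passage of the supremum over $\theta$ through the expectation; the contradiction/sequential-compactness trick bypasses the need for a uniform modulus of continuity of $\ell_q(x,\cdot)$ in $x$.

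Third, for $\lim_{v\to 0}\mathcal{W}_{t^*,\alpha}(v)=0$, I would use the properties of $\mfcr^*$ (cf.\ Lemma~\ref{mfcr_properties}): under Assumption~\ref{Tcontinuity}, $t\mapsto \mfcr^*_t$ is continuous and strictly increasing on the range of $t$ for which $\mfcr^*_t\in(\alpha_c,\bar\alpha)$. Hence its generalized inverse $t^*(\cdot)=\sup\{t:\mfcr^*_t\leq\cdot\}$ is continuous at every $\alpha\in(\alpha_c,\bar\alpha)$; for $\alpha\leq\alpha_c$ (resp.\ $\alpha\geq\bar\alpha$), $t^*$ is constant and equal to the left endpoint of the increase interval (resp.\ to $1$) on a neighborhood of $\alpha$, so continuity holds trivially there as well.

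Finally, for $\lim_{\epsilon\to 0}\Psi(\epsilon)=0$, I would simply substitute the admissible choice $\delta=\mathcal{W}_\ell(\epsilon)^{1/2}$ into the infimum (valid for $\epsilon$ small enough so that $\mathcal{W}_\ell(\epsilon)^{1/2}\in(0,1)$, using step two), obtaining
\[
\Psi(\epsilon)\ \leq\ \mathcal{W}_T\!\bigl(\mathcal{W}_\ell(\epsilon)^{1/2}\bigr)\ +\ \mathcal{W}_\ell(\epsilon)^{1/2}.
\]
Both terms tend to $0$ by the first two steps, completing the proof.
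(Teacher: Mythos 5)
Your proof is correct and follows essentially the same route as the paper: the paper treats the $\mathcal{W}_T$, $\mathcal{W}_{\ell}$ and $\Psi$ limits as immediate (the bound $\Psi(\epsilon)\leq \mathcal{W}_T(\mathcal{W}_{\ell}(\epsilon)^{1/2})+\mathcal{W}_{\ell}(\epsilon)^{1/2}$ is stated just before the lemma) and, like you, obtains the continuity of $t^*$ at $\alpha$ from the fact that $t\mapsto \mfcr^*_t$ is a continuous increasing bijection onto $(\alpha_c,\bar{\alpha})$ given by Lemma~\ref{mfcr_properties}. The only quibble is your side remark for $\alpha\leq\alpha_c$ or $\alpha\geq\bar{\alpha}$, where $t^*$ need not be locally constant at the endpoints themselves; this is harmless since the lemma is only invoked for $\alpha\in(\alpha_c,\bar{\alpha})$.
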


\begin{proof}
	The only non-trivial fact is for $\mathcal{W}_{t^*,\alpha}(v)$. Assumption~\ref{Tcontinuity} and Lemma~\ref{mFMR_properties} provide that 
	$t\mapsto \mFMR^*_t$ is a one-to-one continuous increasing map from $(t^*(\alpha_c), t^*(\bar{\alpha}))$ to $(\alpha_c, \bar{\alpha})$.  Hence, for $\alpha\in  (\alpha_c, \bar{\alpha})$, $\beta\mapsto t^*(\alpha+\beta)$ is continuous in $0$ and $\lim_{v\to 0} \mathcal{W}_{t^*,\alpha}(v)=0$.
\end{proof}

\paragraph{Proof of Theorem~\ref{FMR:th:convergence_rates}}
By using Assumption~\ref{lip} (with the notation therein) and Lemma~\ref{lemrateW} below, we have
\begin{align*}
	\Delta_n(\epsilon) 
	&=2 \left(\mathcal{W}_T( \mathcal{W}_{t^*,\alpha}(2\delta_n + 8\Psi(\epsilon)/s^*))+ 4\Psi(\epsilon)+ 2\delta_n\right)\\
	&\leq 2C_2 C_3 \left(2 \delta_n + (8/s^*) \sqrt{C_1}(C_2 + 1 )\sqrt{\epsilon}\right)
	+ 8 \sqrt{C_1}(C_2 + 1 )\sqrt{\epsilon} + 4 \delta_n \\
	&= 8\sqrt{C_1}(C_2+1) (1+2C_2 C_3) \sqrt{\epsilon}/s^*  + 4(C_2 C_3+1)C\sqrt{\log n / n}/s^* ,
\end{align*}
because $s^*\leq 1$ and by definition of $\delta_n$. 
This gives \eqref{FMR_bound} and \eqref{sr_bound} with $A = 8\sqrt{C_1}(C_2+1) (1+2C_2 C_3)/s^{*2}$ and $B= 4(C_2 C_3+1)C/s^{*2}$. 

\begin{lemma}\label{lemrateW}
Under Assumption~\ref{lip}, we have $ \mathcal{W}_{\ell}(\epsilon) \leq C_1 \epsilon$, $\mathcal{W}_T(\delta) \leq C_2 \delta$, $\mathcal{W}_{t^*,\alpha}(v)\leq C_3 v$ and $\Psi(\epsilon)\leq \sqrt{C_1} (C_2+1)  \sqrt{\epsilon}$  for $\epsilon,\delta,v$ small enough.
\end{lemma}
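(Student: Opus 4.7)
The plan is to treat the four bounds essentially separately, since they follow from the three parts of Assumption~\ref{lip} in a fairly direct way; the last one then follows from the first two via an appropriate choice of the parameter in the infimum defining $\Psi$. The statement ``for $\epsilon,\delta,v$ small enough'' will be used to ensure that we stay in the regimes where Assumption~\ref{lip} applies (namely $\|\theta-\theta^*\| \leq r$ for (i) and $\alpha+\beta$ in the stated interval around $\alpha$ for (iii)).

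First, for $\mathcal{W}_{\ell}(\epsilon) \leq C_1 \epsilon$: I would fix $\theta$ with $\|\theta-\theta^*\|\leq \epsilon \leq r$, and for $\Pstar$-almost every $x$ apply the fundamental theorem of calculus along the segment $t\mapsto (1-t)\theta^*+t\theta$, using continuous differentiability from Assumption~\ref{lip}(i), to obtain $|\ell_q(x,\theta)-\ell_q(x,\theta^*)|\leq \|\theta-\theta^*\| \sup_{\|\tilde\theta-\theta^*\|\leq r}\|\nabla_\theta \ell_q(x,\tilde\theta)\|$. Then I would use the crude bound $\max_q \leq \sum_q$, take expectation under $\Pstar$, and invoke the bound $\sum_q \esp_{\theta^*}[\sup \|\nabla_\theta \ell_q\|]\leq C_1$ from the assumption. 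Finally, take the supremum over $\|\theta-\theta^*\|\leq \epsilon$ to conclude. This is the step with the most technical content, but since the control is uniform in a ball of radius $r$, no real obstacle arises.

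For $\mathcal{W}_T(\delta)\leq C_2 \delta$, the conclusion is essentially a restatement of Assumption~\ref{lip}(ii) after taking the supremum over $t,t'$ with $|t-t'|\leq \delta$, so I would just record it. For $\mathcal{W}_{t^*,\alpha}(v) \leq C_3 v$, I would first observe that for $v$ small enough (precisely, for $v \leq \min((\alpha-\alpha_c)/2, (\bar\alpha-\alpha)/2)$), any $\beta$ with $|\beta|\leq v$ satisfies $\alpha+\beta \in [(\alpha_c+\alpha)/2, (\alpha+\bar\alpha)/2]$, so Assumption~\ref{lip}(iii) applies and yields $|t^*(\alpha+\beta)-t^*(\alpha)|\leq C_3|\beta|\leq C_3 v$; taking the supremum over such $\beta$ gives the bound.

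Finally, for $\Psi(\epsilon) \leq (1+C_2)C_1^{1/2}\epsilon^{1/2}$, the key observation is that the infimum in \eqref{Psi} need not be computed exactly; it suffices to evaluate the bracketed expression at one well-chosen $\delta$. Using the two bounds just established, the map $\delta \mapsto \mathcal{W}_T(\delta)+\mathcal{W}_\ell(\epsilon)/\delta$ is bounded above by $C_2\delta + C_1\epsilon/\delta$, and choosing $\delta = C_1^{1/2}\epsilon^{1/2}$ gives exactly $C_2 C_1^{1/2}\epsilon^{1/2} + C_1^{1/2}\epsilon^{1/2} = (1+C_2)C_1^{1/2}\epsilon^{1/2}$, which proves the claim for $\epsilon$ small enough to guarantee $C_1^{1/2}\epsilon^{1/2}\leq 1$ (so that the chosen $\delta$ lies in $(0,1)$) and $\epsilon\leq r$.
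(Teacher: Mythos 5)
Your proof is correct and is exactly the intended argument: the paper omits the proof of this lemma as routine, and your choices (mean-value bound along the segment plus $\max_q\le\sum_q$ for $\mathcal{W}_\ell$, direct restatement for $\mathcal{W}_T$, restricting $v$ so that $\alpha+\beta$ stays in the interval of Assumption~\ref{lip}(iii) for $\mathcal{W}_{t^*,\alpha}$, and evaluating the infimum defining $\Psi$ at $\delta=C_1^{1/2}\epsilon^{1/2}$) match the bound $\Psi(\epsilon)\leq \mathcal{W}_T(\mathcal{W}_{\ell}(\epsilon)^{1/2})+\mathcal{W}_{\ell}(\epsilon)^{1/2}$ already noted in the text. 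No gaps.
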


\subsection{An optimal procedure}\label{FMR:sec:optimalproc}

We consider in this section the procedure that serves as an optimal procedure in our theory. 
For $t\in [0,1]$, let $\proc^*_t = (\widehat{\mathbf{Z}}^*, S^*_t)$ be the procedure using the Bayes clustering $\widehat{\mathbf{Z}}^*$ \eqref{equ:Bayesclustering} and the selection rule $S^*_t = \{i\in \{1,\dots,n\}\::\: T^*_i < t \}$. 
Let us consider the map $t\in [0,1]\mapsto \mFMR(\proc^*_t)$ and note that $ \mFMR(\proc^*_t)=\mFMR^*_t$ as defined by \eqref{mFMRstart}.
Lemma~\ref{mFMR_properties} below provides the key properties for this function. 

\begin{definition}
The optimal procedure at level $\alpha$ is defined by $\proc^*_{t^*(\alpha)}$ where $t^*(\alpha)$ is defined by \eqref{tstaralpha}.
\end{definition}
Note that the optimal procedure is not the same as the oracle procedure defined in Section~\ref{FMR:sec:oracle}, although these two procedures are expected to behave roughly in the same way (at least for a large $n$).

Under  Assumption \ref{Tcontinuity}, Lemma~\ref{mFMR_properties} entails that, for $\alpha>\alpha_c$,  $\mFMR(\proc^*_{t^*(\alpha)}) \leq \alpha$. Hence, $\proc^*_{t^*(\alpha)}$ controls the mFMR at level $\alpha$. In addition, it is optimal in the following sense:  any other mFMR controlling procedure should select less items than $\proc^*_{t^*(\alpha)}$.

\begin{lemma}[Optimality of $\proc^*_{t^*(\alpha)}$] \label{oracle_optimality}
Let Assumption \ref{Tcontinuity} be true and choose $\alpha \in (\alpha_c, \bar{\alpha}]$. Then the oracle procedure $\proc^*_{t^*(\alpha)}= (\widehat{\mathbf{Z}}^*, S^*_{t^*(\alpha)})$ satisfies the following:
\begin{itemize}
\item[(i)] $\mFMR(\proc^*_{t^*(\alpha)}) = \alpha$;
\item[(ii)]  for any procedure $\proc = (\widehat{\mathbf{Z}}, S) $ such that $\mFMR(\proc) \leq \alpha$, we have
$ \esp_{\theta^*}(|S|) \leq \esp_{\theta^*}(|S^*_{t^*(\alpha)}|).$
\end{itemize}
\end{lemma}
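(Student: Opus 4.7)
\emph{Part (i).} Under Assumption~\ref{Tcontinuity}, the earlier Lemma~\ref{mfcr_properties} gives that $t\mapsto \mfcr^*_t$ is continuous and increasing on $(\alpha_c,\bar\alpha)$, with values ranging from $\alpha_c$ up to $\bar\alpha$. For $\alpha \in (\alpha_c, \bar\alpha]$, the supremum in the definition \eqref{tstaralpha} of $t^*(\alpha)$ is therefore attained, and $\mfcr^*_{t^*(\alpha)} = \alpha$ follows from continuity.

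\emph{Part (ii), reduction to the Bayes clustering.} For any competing procedure $\proc = (\widehat{\mathbf{Z}}, S)$ and any permutation $\sigma\in [Q]$, the pointwise bound $1 - \ell_{\sigma(\hat{Z}_i)}(X_i,\theta^*) \geq 1 - \max_q \ell_q(X_i,\theta^*) = T^*_i$ yields
\[
 \min_{\sigma \in [Q]} \esp_{\theta^*}\Bigl(\sum_{i\in S} \mathbbm{1}\{Z_i \neq \sigma(\hat Z_i)\}\;\Big|\;\mathbf X\Bigr) \geq \sum_{i \in S} T^*_i ,
\]
with equality when $\widehat{\mathbf{Z}} = \widehat{\mathbf{Z}}^*$ (then $\sigma = \mathrm{id}$ is a minimizer, as $\hat Z^*_i$ attains the argmax of $\ell_q(X_i,\theta^*)$). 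Hence $\mfcr(\widehat{\mathbf{Z}}, S) \geq \mfcr(\widehat{\mathbf{Z}}^*, S) = \esp_{\theta^*}(\sum_{i \in S} T^*_i)/\esp_{\theta^*}(|S|)$, so we may assume without loss of generality that the competitor already uses $\widehat{\mathbf{Z}}^*$, preserving both $|S|$ and the mFCR constraint.

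\emph{Part (ii), Neyman--Pearson step.} Set $t = t^*(\alpha)$. The cornerstone is the pointwise inequality
\[
 (t - T^*_i)\bigl(\mathbbm{1}\{T^*_i < t\} - \mathbbm{1}\{i \in S\}\bigr) \geq 0,
\]
valid regardless of the sign of $t - T^*_i$. Summing over $i$, taking $\esp_{\theta^*}$, and combining the identity $\esp_{\theta^*}(\sum_{i\in S^*_t} T^*_i) = \alpha \esp_{\theta^*}(|S^*_t|)$ from Part~(i) with the constraint $\esp_{\theta^*}(\sum_{i \in S} T^*_i) \leq \alpha \esp_{\theta^*}(|S|)$ delivers
\[
(t - \alpha)\bigl(\esp_{\theta^*}(|S^*_t|) - \esp_{\theta^*}(|S|)\bigr) \geq 0.
\]
To close, note that since $T(X,\theta^*)$ is continuous and $\mfcr^*_s \leq s$ for all $s$, the strict inequality $\mfcr^*_\alpha < \alpha$ holds whenever $\Pstar(T^* < \alpha)>0$ (that is, for $\alpha>\alpha_c$); continuity of $\mfcr^*$ then forces $t^*(\alpha) > \alpha$, giving $\esp_{\theta^*}(|S^*_t|) \geq \esp_{\theta^*}(|S|)$.

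\emph{Main obstacle.} The trickiest point is the reduction to the Bayes clustering: one must handle the outer $\min_{\sigma\in[Q]}$ inside the mFCR simultaneously as a lower bound uniform in $\sigma$ for arbitrary $\widehat{\mathbf{Z}}$ and as an equality for $\widehat{\mathbf{Z}}^*$ (attained at $\sigma=\mathrm{id}$). After this reduction, the remainder is a Neyman--Pearson manipulation, together with the verification, via continuity of $T(X,\theta^*)$, that the degenerate case $t^*(\alpha)=\alpha$ does not occur.
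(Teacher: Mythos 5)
Your proposal is correct and follows essentially the same route as the paper: part (i) via the continuity and monotonicity of $t\mapsto\mfcr^*_t$ from Lemma~\ref{mfcr_properties}, and part (ii) via the reduction to the Bayes clustering followed by the Neyman--Pearson-type pointwise inequality $(t^*(\alpha)-T^*_i)(\mathbbm{1}\{T^*_i<t^*(\alpha)\}-\mathbbm{1}\{i\in S\})\geq 0$. The only (immaterial) difference is how you obtain $t^*(\alpha)>\alpha$: the paper reads it off directly from $\alpha=\mfcr^*_{t^*(\alpha)}<t^*(\alpha)$ (Lemma~\ref{mfcr_properties}~(ii)), whereas you evaluate the mFCR at threshold $\alpha$ and invoke continuity, which is also valid.
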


\subsection{Preliminary steps for proving Theorem~\ref{result}}

To keep the main proof concise, we need to define several additional notation. Let for $t\in [0,1]$ and $\theta\in \Theta$ (recall \eqref{equTXtheta})
\begin{align}
\mathbf{\widehat{L}}_0(\theta, t) &= \frac{1}{n} \sum_{i=1}^n T(X_i, \theta) \ind_{T(X_i, \theta) < t}; \label{L0chap}\\
\mathbf{\widehat{L}}_1(\theta, t) &= \frac{1}{n} \sum_{i=1}^n \ind_{T(X_i, \theta) < t}\label{L1chap}.
\end{align} 
Denote $\mathbf{\widehat{L}}= \mathbf{\widehat{L}}_0 / \mathbf{\widehat{L}}_1$,  $\mathbf{L}_0=\esp_{\theta^*}\mathbf{\widehat{L}}_0$, $\mathbf{L}_1=\esp_{\theta^*}\mathbf{\widehat{L}}_1$, $\mathbf{L}=\mathbf{L}_0 / \mathbf{L}_1$ (with the convention $0/0 = 0$). 
Note that for any $\alpha>\alpha_c$, the mFMR of the optimal procedure $\proc^*_{t^*(\alpha)}$ defined in Section~\ref{FMR:sec:optimalproc} is given by
$
\mFMR(\proc^*_{t^*(\alpha)})=\mathbf{L}(\theta^*, t^*(\alpha))=\alpha.
$

Also, we denote from now on $\ell_{i,q}^* = \Pstar(Z_i = q | X_i)$ for short and introduce for any parameter $\theta\in \Theta$ (recall \eqref{equlvalue} and \eqref{equTXtheta})
\begin{align}
\bar{q}(X_i, \theta) &\in \underset{q \in \{1, \dots, Q \}}{\argmax}  \ell_q(X_i, \theta),\quad 1\leq i\leq n;   \label{equqbar} \\
U(X_i, \theta) &= 1 - \ell^*_{i, \bar{q}(X_i, \theta)}, \quad 1\leq i\leq n;\label{equU}\\
\mathbf{\widehat{M}}_0(\theta, t) &= \frac{1}{n} \sum_{i=1}^n U(X_i, \theta) \ind_{T(X_i, \theta) < t},\quad t\in [0,1],\label{equM0}
\end{align}
Note that $\mathbf{\widehat{M}}_0(\theta^*, t)=\mathbf{\widehat{L}}_0(\theta^*, t)$ but in general $\mathbf{\widehat{M}}_0(\theta, t)$ is different from $\mathbf{\widehat{L}}_0(\theta, t)$. We denote $\mathbf{\widehat{M}}=\mathbf{\widehat{M}}_0 / \mathbf{\widehat{L}}_1 $, $\mathbf{M}_0= \esp_{\theta^*}\mathbf{\widehat{M}}_0$ and $\mathbf{M}=\mathbf{M}_0 / \mathbf{L}_1$ (with the convention $0/0 = 0$).

When $\alpha  \in (\alpha_c, \bar{\alpha}]$ (recall \eqref{alphac} and \eqref{alphabar}), we also let 
\begin{equation}\label{equsstar}
s^*=s^*(\alpha,\theta^*) = n^{-1} \esp_{\theta^*} \left( |S^*_{t^*(\frac{\alpha + \alpha_c}{2})}| \right) = \mathbf{L}_1(\theta^*, t^*((\alpha + \alpha_c)/2))>0.
\end{equation}
We easily see that the latter is positive: if it was zero then $S^*_{t^*((\alpha + \alpha_c)/2))}$ would be empty  which would entails that $\mFMR(\proc^*_{t^*((\alpha + \alpha_c)/2)})$ is zero. This is excluded by definition \eqref{alphac} of $\alpha_c$ because $(\alpha + \alpha_c)/2>\alpha_c$.

Also, we are going to extensively use the event
\begin{align*}
\Omega_\epsilon &= \left\{ \underset{\sigma \in [Q]}{\min} \|\hat{\theta}^\sigma - \theta^*\|_2 < \epsilon \right\}.  
\end{align*}
On this event, we fix any permutation $\sigma \in [Q]$  (possibly depending on $X$) such that $\|\hat{\theta}^\sigma - \theta^*\|_2 < \epsilon$. 
Now using Lemma~\ref{lempluginthres}, the plug-in selection rule can be rewritten as 
$\widehat{S}^{\mbox{\tiny PI}}_\alpha =\{ i\in \{1,\dots,n\}\::\: \hat{T}_i < \hat{t}(\alpha) \}$ (denoted by $\widehat{S}$ in the sequel for short),
where
\begin{align}\label{thatalpha}
\hat{t}(\alpha) = \sup \{ t \in [0,1]\::\: \mathbf{\widehat{L}}(\hat{\theta}, t) \leq \alpha \}.
\end{align}


With the above notation, we can upper bound what is inside the brackets of $\FMR(\widehat{\proc}^{\mbox{\tiny PI}})$ and $\mFMR(\widehat{\proc}^{\mbox{\tiny PI}})$ as follows.

\begin{lemma}\label{lem:sigmaprecise}
For the permutation $\sigma$ in $\Omega_\epsilon$ realizing $\|\hat{\theta}^\sigma - \theta^*\|_2 < \epsilon$, we have on the event  $\Omega_\epsilon$ the following relations:
\begin{align*}
|\widehat{S}|&=\mathbf{\widehat{L}}_1(\hat{\theta}^\sigma, \hat{t}(\alpha)) ; \\
\underset{\sigma' \in [Q]}{\min}  \esp_{\theta^*} \left(\varepsilon_{\widehat{S}}(\sigma'(\widehat{\mathbf{Z}}) ,\mathbf{Z})\: \bigg|\: \mathbf X\right) 
&\leq \mathbf{\widehat{M}}_0(\hat{\theta}^\sigma, \hat{t}(\alpha));\\
\underset{\sigma' \in [Q]}{\min} \esp_{\theta^*} \left( \frac{\varepsilon_{\widehat{S}}(\sigma'(\widehat{\mathbf{Z}}) ,\mathbf{Z})}{\max(|\widehat{S}|,1)} \: \bigg|\: \mathbf X\right) &\leq  \mathbf{\widehat{M}}(\hat{\theta}^\sigma, \hat{t}(\alpha)) .
\end{align*}
\end{lemma}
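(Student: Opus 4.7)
The plan rests on a single symmetry observation: since $T(X,\theta)=1-\max_{q}\ell_q(X,\theta)$ is the complement of a maximum over the components of $\theta$, it is invariant under any relabeling, namely $T(X,\hat\theta^\sigma)=T(X,\hat\theta)$ for every $\sigma\in[Q]$. Consequently the statistics $\hat T_i$, the functionals $\mathbf{\widehat L}_0(\hat\theta,t)$ and $\mathbf{\widehat L}_1(\hat\theta,t)$, and hence the data-driven threshold $\hat t(\alpha)$ defined by \eqref{thatalpha}, are unchanged when $\hat\theta$ is replaced by $\hat\theta^\sigma$. The first identity is then immediate: $\widehat S=\{i:T(X_i,\hat\theta^\sigma)<\hat t(\alpha)\}$, whose (normalized) cardinality is exactly $\mathbf{\widehat L}_1(\hat\theta^\sigma,\hat t(\alpha))$.

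For the two inequalities I would first compute the conditional expectation explicitly. Since the $Z_i$ are conditionally independent given $\mathbf X$ with law driven by the $\ell^*_{i,q}$'s, and since $\widehat S$ and $\widehat{\mathbf Z}$ are $\mathbf X$-measurable, linearity yields
\begin{align*}
\esp_{\theta^*}\bigl(\varepsilon_{\widehat S}(\sigma'(\widehat{\mathbf Z}),\mathbf Z)\,\big|\,\mathbf X\bigr) \;=\; \sum_{i\in\widehat S}\bigl(1-\ell^*_{i,\sigma'(\hat Z_i)}\bigr)
\end{align*}
for any (possibly $\mathbf X$-measurable) permutation $\sigma'$. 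I would then upper-bound the minimum by the value at $\sigma'=\sigma$, the specific permutation furnished on $\Omega_\epsilon$. The one nontrivial ingredient is the identity $\sigma(\bar q(X_i,\hat\theta))=\bar q(X_i,\hat\theta^\sigma)$, which follows from $\ell_q(X,\hat\theta^\sigma)=\ell_{\sigma^{-1}(q)}(X,\hat\theta)$ and the resulting relabeling of the $\argmax$. Combined with definition \eqref{equU}, this turns each summand $1-\ell^*_{i,\sigma(\hat Z_i)}$ into $U(X_i,\hat\theta^\sigma)$. Invoking once more the permutation invariance of $T$ to rewrite $\widehat S$ as $\{i:T(X_i,\hat\theta^\sigma)<\hat t(\alpha)\}$, the resulting sum equals $n\,\mathbf{\widehat M}_0(\hat\theta^\sigma,\hat t(\alpha))$, which is the second bound.

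The third inequality will then follow at once: the factor $1/\max(|\widehat S|,1)$ is $\mathbf X$-measurable, so it pulls out of both the conditional expectation and the minimum over $\sigma'$, and dividing the numerator bound by the first identity produces $\mathbf{\widehat M}_0/\mathbf{\widehat L}_1=\mathbf{\widehat M}$ evaluated at $(\hat\theta^\sigma,\hat t(\alpha))$; the degenerate case $|\widehat S|=0$ is trivial by the convention $0/0=0$. I expect the only delicate point to be bookkeeping with the two permutations: $\sigma$ relabels the coordinates of $\hat\theta$, while $\sigma'$ relabels the output of $\widehat{\mathbf Z}$, and the identity $\sigma\circ\bar q(\cdot,\hat\theta)=\bar q(\cdot,\hat\theta^\sigma)$ is precisely what lets the choice $\sigma'=\sigma$ convert a quantity defined through the estimated posteriors $\ell(\cdot,\hat\theta^\sigma)$ into one defined through the true posteriors $\ell(\cdot,\theta^*)$, which is what $U(X_i,\hat\theta^\sigma)$ encodes.
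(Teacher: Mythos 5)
Your proposal is correct and follows essentially the same route as the paper's proof: permutation invariance of $T$ to get $\widehat{S}^\sigma=\widehat{S}$ and the first identity, bounding the minimum over $\sigma'$ by the choice $\sigma'=\sigma$, computing the conditional expectation via the $\ell^*_{i,q}$'s, and using $\sigma(\bar q(X_i,\hat\theta))=\bar q(X_i,\hat\theta^\sigma)$ to identify the sum with $\mathbf{\widehat M}_0(\hat\theta^\sigma,\hat t(\alpha))$. Your bookkeeping of the $1/n$ normalization is in fact slightly more careful than the paper's.
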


Finally, we make use of the concentration of the empirical processes $\mathbf{\widehat{L}}_0(\theta, t)$, $\mathbf{\widehat{L}}_1(\theta, t)$, and $\mathbf{\widehat{M}}_0(\theta, t)$, uniformly with respect to $\theta \in \subsetEsti$ (where $\subsetEsti$ is defined in Assumption~\ref{ass_estim}). Thus, we define the following events, for $\delta>0$ (recall $s^*$ defined by \eqref{equsstar}):
\begin{align*}
\Gamma_{0, \delta, t} &= \left \{ \underset{\theta \in  \subsetEsti}{\sup} \left| \mathbf{\widehat{L}}_0(\theta, t) - \mathbf{L}_0(\theta, t) \right|  \leq \delta \right \}; \\
\Gamma_{1, \delta, t} &= \left \{ \underset{\theta \in \subsetEsti}{\sup} \left| \mathbf{\widehat{L}}_1(\theta, t) - \mathbf{L}_1(\theta, t) \right|  \leq \delta \right \}; \\
\Gamma_{\delta, t} &= \left \{ \underset{\substack{\theta \in \subsetEsti, \\ \mathbf{L}_1(\theta, t) \geq s^* }}{\sup} \left| \mathbf{\widehat{L}}(\theta, t) - \mathbf{L}(\theta, t) \right|  \leq \delta \right \};\\
\Upsilon_{0, \delta, t} &= \left\{  \underset{\theta \in \subsetEsti}{\sup} \left| \mathbf{\widehat{M}}_0(\theta, t) - \mathbf{M}_0(\theta, t) \right|  \leq \delta \right \}.
\end{align*}
Note that the following holds:
\begin{equation}\label{inclusiongamma}
 \Gamma_{0, \delta s^*/2, t} \cap \Gamma_{1, \delta s^*/2, t}\subset \Gamma_{\delta, t}.
 \end{equation}
Indeed, on the event $\Gamma_{0, \delta s^*/2, t} \cap \Gamma_{1, \delta s^*/2, t}$, provided that $\mathbf{L}_1(\theta, t) \geq s^*$, we have
\begin{align*}
&\left| \frac{\mathbf{\widehat{L}}_0(\theta, t)}{\mathbf{\widehat{L}}_1(\theta, t)} - \frac{\mathbf{L}_0(\theta, t)}{\mathbf{L}_1(\theta, t)} \right| \\
&\leq \left| \frac{\mathbf{L}_0(\theta, t) - \mathbf{\widehat{L}}_0(\theta, t)}{\mathbf{L}_1(\theta, t)} \right| + \mathbf{\widehat{L}}_0(\theta, t) \left| \frac{1}{\mathbf{\widehat{L}}_1(\theta, t)} - \frac{1}{\mathbf{L}_1(\theta, t)} \right| \\
&\leq (\delta s^*/2)/ s^* +(\delta s^*/2)/ s^*  = \delta,
\end{align*}
because $\mathbf{\widehat{L}}_0(\theta, t)\leq \mathbf{\widehat{L}}_1(\theta, t)$. This proves the desired inclusion.
%

\subsection{Proof of Theorem~\ref{result} }


Let us now provide a proof for Theorem~\ref{result}.

\paragraph{Step 1: bounding $\hat{t}(\alpha)$  w.r.t. $t^*(\alpha)$}
Recall \eqref{tstaralpha}, \eqref{thatalpha} and \eqref{equsstar}.
In this part, we only consider realizations on the event $\Omega_\epsilon$. Let $\beta \in [\frac{2\alpha + \alpha_c}{3}, \frac{\alpha + \bar\alpha}{2}]$. 
By Lemma~\ref{lemcdftheta}, we have 
$$
\mathbf{L}_1(\hat{\theta}^\sigma, t^*(\beta)) \geq \mathbf{L}_1(\theta^*, t^*(\beta)) - \Psi(\| \hat{\theta}^\sigma - \theta^*\| _2)\geq  \mathbf{L}_1(\theta^*, t^*((2\alpha + \alpha_c)/3))- \Psi(\epsilon),$$
because $t^*(\beta)\geq t^*(\frac{2\alpha + \alpha_c}{3})$ since $t^*(\cdot)$ is non decreasing by Lemma~\ref{mFMR_properties}.
Hence $\mathbf{L}_1(\hat{\theta}^\sigma, t^*(\beta)) \geq s^*$  for $\epsilon$ smaller than a threshold only depending on $\theta^*$ and $\alpha$. Hence,  we have on $ \Gamma_{\delta, t^*(\beta)} $ that 
\begin{align*}
\mathbf{L}(\hat{\theta}^\sigma, t^*(\beta)) - \delta \leq \mathbf{\widehat{L}}(\hat{\theta}^\sigma, t^*(\beta)) &\leq \delta + \mathbf{L}(\hat{\theta}^\sigma,  t^*(\beta)). 
\end{align*}
By using again Lemma~\ref{lemcdftheta}, we have  
\begin{align*}
\mathbf{L}(\theta^*, t^*(\beta)) -  3\Psi(\epsilon)/s^*
\leq \mathbf{L}(\hat{\theta}^\sigma, t^*(\beta)) \leq \mathbf{L}(\theta^*, t^*(\beta)) +  3\Psi(\epsilon)/s^*.
\end{align*}
Given that $\mathbf{L}(\theta^*, t^*(\beta)) = \mFMR(\proc^*_{t^*(\beta)}) = \beta$ (see Lemma~\ref{oracle_optimality} (i)), it follows that for $\gamma=\gamma (\epsilon, \delta) = \delta + 4\Psi(\epsilon)/s^*$,  on the event $ \Gamma_{\delta, t^*(\alpha - \gamma)} \cap  \Gamma_{\delta, t^*(\alpha + \gamma)} $,
\begin{align*}
\mathbf{\widehat{L}}(\hat{\theta}^\sigma, t^*(\alpha - \gamma)) &\leq \alpha, \quad \mathbf{\widehat{L}}(\hat{\theta}^\sigma, t^*(\alpha + \gamma)) > \alpha, 
\end{align*}
where we indeed check that $\alpha - \gamma \geq \frac{2\alpha + \alpha_c}{3}$ and  $\alpha +\gamma \leq \frac{\alpha + \bar\alpha}{2}$ for $\delta$ and $\epsilon$ smaller than some threshold only depending on $\theta^*$ and $\alpha$. 
In a nutshell, we have established
\begin{align}\label{inclusionevent}
\Gamma_{\delta, t^*(\alpha - \gamma)} \cap  \Gamma_{\delta, t^*(\alpha + \gamma)} \cap \Omega_\epsilon\subset \left\{t^*(\alpha - \gamma)\leq \hat{t}(\alpha) \leq t^*(\alpha + \gamma) \right\}.
\end{align}


\paragraph{Step 2: upper-bounding the FMR}


Let us consider the event
\begin{align*}
\Lambda_{\alpha,\delta,\epsilon}:=\Gamma_{0, \delta s^*/2, t^*(\alpha - \gamma)} &\cap \Gamma_{1, \delta s^*/2, t^*(\alpha - \gamma)}
\cap\Gamma_{0, \delta s^*/2, t^*(\alpha + \gamma)} \\
&\cap \Gamma_{1, \delta s^*/2, t^*(\alpha + \gamma)}
\cap\Upsilon_{0, \delta, t^*(\alpha + \gamma)}\cap \Omega_\epsilon,
\end{align*}
where the different events have been defined in the previous section.

Let us prove \eqref{FMR_bound}. By using Lemma~\ref{lem:sigmaprecise} and \eqref{inclusionevent},
\begin{align*}
\FMR(\hat{\proc}) 
&\leq \esp_{\theta^*} [ \mathbf{\widehat{M}}(\hat{\theta}^\sigma, \hat{t}(\alpha)) \ind_{\Lambda_{\alpha,\delta,\epsilon}}  ] + \P((\Lambda_{\alpha,\delta,\epsilon})^c) \\
&\leq \esp_{\theta^*} \left[\frac{\mathbf{\widehat{M}}_0(\hat{\theta}^\sigma, t^*(\alpha + \gamma))}{\mathbf{\widehat{L}}_1(\hat{\theta}^\sigma, t^*(\alpha - \gamma))}\ind_{\Lambda_{\alpha,\delta,\epsilon}} \right] +  \P((\Lambda_{\alpha,\delta,\epsilon})^c) .
\end{align*}
Now using a concentration argument on the event $\Lambda_{\alpha,\delta,\epsilon}\subset \Gamma_{1, \delta , t^*(\alpha - \gamma)} \cap \Upsilon_{0, \delta, t^*(\alpha + \gamma)}$, we have
\begin{align}
\FMR(\hat{\proc}) &\leq \esp_{\theta^*} \left[\frac{\mathbf{{M}}_0(\hat{\theta}^\sigma, t^*(\alpha + \gamma)) + \delta}{\mathbf{{L}}_1(\hat{\theta}^\sigma, t^*(\alpha - \gamma))-\delta}\ind_{\Lambda_{\alpha,\delta,\epsilon}} \right] +  \P((\Lambda_{\alpha,\delta,\epsilon})^c) \nonumber\\
&\leq \frac{\mathbf{{M}}_0({\theta^*}, t^*(\alpha + \gamma)) + 3\Psi(\epsilon)+ \delta}{\mathbf{{L}}_1({\theta^*}, t^*(\alpha - \gamma))- \Psi(\epsilon)-\delta}+  \P((\Lambda_{\alpha,\delta,\epsilon})^c)\nonumber\\
&= \frac{\mathbf{{L}}_0({\theta^*}, t^*(\alpha + \gamma)) + 3\Psi(\epsilon)+ \delta}{\mathbf{{L}}_1({\theta^*}, t^*(\alpha - \gamma))- \Psi(\epsilon)-\delta}+  \P((\Lambda_{\alpha,\delta,\epsilon})^c),\label{FMRbound}
\end{align}
by using Lemma~\ref{lemcdftheta} and that $\mathbf{{M}}_0({\theta^*}, t)=\mathbf{{L}}_0({\theta^*}, t)$ for all $t$ by definition.
Now, using again Lemma~\ref{lemcdftheta}, we have
\begin{align*}
\mathbf{{L}}_0({\theta^*}, t^*(\alpha + \gamma))&\leq \mathbf{{L}}_0({\theta^*}, t^*(\alpha - \gamma)) + \mathcal{W}_T(t^*(\alpha + \gamma)-t^*(\alpha - \gamma))\\
&\leq  \mathbf{{L}}_0({\theta^*}, t^*(\alpha - \gamma)) + \mathcal{W}_T( \mathcal{W}_{t^*,\alpha}(2 \gamma))
 \end{align*}
 This entails
 \begin{align*}
\FMR(\hat{\proc}) &\leq \frac{\mathbf{{L}}_0({\theta^*}, t^*(\alpha - \gamma)) +\mathcal{W}_T( \mathcal{W}_{t^*,\alpha}(2 \gamma))+ 3\Psi(\epsilon)+ \delta}{\mathbf{{L}}_1({\theta^*}, t^*(\alpha - \gamma))- \Psi(\epsilon)-\delta} +  \P((\Lambda_{\alpha,\delta,\epsilon})^c)\\
&\leq \frac{\mathbf{{L}}_0({\theta^*}, t^*(\alpha - \gamma)) }{\mathbf{{L}}_1({\theta^*}, t^*(\alpha - \gamma))- \Psi(\epsilon)-\delta} \\
&+(s^*/2)^{-1} \left(\mathcal{W}_T( \mathcal{W}_{t^*,\alpha}(2 \gamma))+ 3\Psi(\epsilon)+ \delta\right) +  \P((\Lambda_{\alpha,\delta,\epsilon})^c),
 \end{align*}
by choosing $\epsilon,\delta$ smaller than a threshold (only depending on $\theta^*$ and $\alpha$) so that $\mathbf{{L}}_1({\theta^*}, t^*(\alpha - \gamma))- \Psi(\epsilon)-\delta\geq s^*/2$. Now using $\mathbf{{L}}_0({\theta^*}, t^*(\alpha - \gamma))=(\alpha-\gamma) \mathbf{{L}}_1({\theta^*}, t^*(\alpha - \gamma))$, we have
  \begin{align*}
 \frac{\mathbf{{L}}_0({\theta^*}, t^*(\alpha - \gamma)) }{\mathbf{{L}}_1({\theta^*}, t^*(\alpha - \gamma))- \Psi(\epsilon)-\delta}
 &= (\alpha - \gamma) \left(1+ \frac{\Psi(\epsilon)+\delta}{\mathbf{{L}}_1({\theta^*}, t^*(\alpha - \gamma))- \Psi(\epsilon)-\delta}\right)\\
 &\leq \alpha  \left(1+ (s^*/2)^{-1}(\Psi(\epsilon)+\delta)\right).
  \end{align*}
This leads to
 \begin{align*}
\FMR(\hat{\proc}) &\leq \alpha  +(2/s^*) \left[\mathcal{W}_T( \mathcal{W}_{t^*,\alpha}(2\delta + 8\Psi(\epsilon)/s^*))+ 4\Psi(\epsilon)+ 2\delta\right] +  \P((\Lambda_{\alpha,\delta,\epsilon})^c),
  \end{align*}
which holds true for $\delta,\epsilon$ smaller than a threshold only depending on $\theta^*$ and $\alpha$.

\paragraph{Step 3: upper-bounding the mFMR}

We apply a similar technique as for step 2. By using Lemma~\ref{lem:sigmaprecise} and \eqref{inclusionevent},
\begin{align*}
\mFMR(\hat{\proc}) 
&\leq \frac{\esp_{\theta^*} [ \mathbf{\widehat{M}}_0(\hat{\theta}^\sigma, \hat{t}(\alpha))\ind_{\Lambda_{\alpha,\delta,\epsilon}} ]+ \P((\Lambda_{\alpha,\delta,\epsilon})^c)}{\esp_{\theta^*} [ \mathbf{\widehat{L}}_1(\hat{\theta}^\sigma, \hat{t}(\alpha))\ind_{\Lambda_{\alpha,\delta,\epsilon}} ]}    \\
&\leq \frac{\esp_{\theta^*} [ \mathbf{\widehat{M}}_0(\hat{\theta}^\sigma, t^*(\alpha+\gamma))\ind_{\Lambda_{\alpha,\delta,\epsilon}}]+ \P((\Lambda_{\alpha,\delta,\epsilon})^c)}{\esp_{\theta^*} [ \mathbf{\widehat{L}}_1(\hat{\theta}^\sigma, t^*(\alpha-\gamma))\ind_{\Lambda_{\alpha,\delta,\epsilon}}]}.
\end{align*}
Now using a concentration argument on $\Lambda_{\alpha,\delta,\epsilon}\subset \Gamma_{1, \delta , t^*(\alpha - \gamma)} \cap \Upsilon_{0, \delta, t^*(\alpha + \gamma)}$, we have
\begin{align*}
\mFMR(\hat{\proc}) &\leq \frac{\esp_{\theta^*} [ (\mathbf{{M}}_0(\hat{\theta}^\sigma, t^*(\alpha+\gamma))+\delta)\ind_{\Lambda_{\alpha,\delta,\epsilon}}]+ \P((\Lambda_{\alpha,\delta,\epsilon})^c)}{\esp_{\theta^*} [ (\mathbf{{L}}_1(\hat{\theta}^\sigma, t^*(\alpha-\gamma))-\delta)\ind_{\Lambda_{\alpha,\delta,\epsilon}}]} \\
&\leq \frac{\mathbf{{M}}_0({\theta^*}, t^*(\alpha + \gamma)) + 3\Psi(\epsilon)+ \delta+  \P((\Lambda_{\alpha,\delta,\epsilon})^c)}{\mathbf{{L}}_1({\theta^*}, t^*(\alpha - \gamma))- \Psi(\epsilon)-\delta-  \P((\Lambda_{\alpha,\delta,\epsilon})^c)}\\
&= \frac{\mathbf{{L}}_0({\theta^*}, t^*(\alpha + \gamma)) + 3\Psi(\epsilon)+ \delta+  \P((\Lambda_{\alpha,\delta,\epsilon})^c)}{\mathbf{{L}}_1({\theta^*}, t^*(\alpha - \gamma))- \Psi(\epsilon)-\delta-  \P((\Lambda_{\alpha,\delta,\epsilon})^c)},
\end{align*}
by using Lemma~\ref{lemcdftheta} and that $\mathbf{{M}}_0({\theta^*}, t)=\mathbf{{L}}_0({\theta^*}, t)$ by definition.
Letting $x=\mathbf{{L}}_0({\theta^*}, t^*(\alpha + \gamma)) + 3\Psi(\epsilon)+ \delta$, $y=\mathbf{{L}}_1({\theta^*}, t^*(\alpha - \gamma))- \Psi(\epsilon)-\delta$ and $u=\P((\Lambda_{\alpha,\delta,\epsilon})^c)$, we have obtained the bound $(x+u)/(y-u)$, which has to be compared with the FMR bound \eqref{FMRbound}, which  reads $x/y + u$. Now, when $y\in [0,1]$, $x\geq 0$,  $x/y\leq 2$, $u/y\leq 1/2$, $y-u\geq s^*/2$, we have 
$$(x+u)/(y-u)\leq \frac{x/y}{1-u/y} + (2/s^*) u\leq x/y (1+ 2u/y) + (2/s^*) u \leq x/y + (10/s^*)u.$$
As a result, for $\epsilon,\delta$ small enough, and $\P((\Lambda_{\alpha,\delta,\epsilon})^c)\leq s^*/4$, we obtain the same bound as for the FMR, with $\P((\Lambda_{\alpha,\delta,\epsilon})^c)$ replaced by $(10/s^*)\P((\Lambda_{\alpha,\delta,\epsilon})^c)$.

\paragraph{Step 4:  lower-bounding the selection rate} 

In Step 3, when bounding the mFMR, we derived a lower bound for the denominator of the mFMR, that is, $\esp_{\theta^*}(|\hat{S}|)$. It reads
\begin{align*}
n^{-1}\esp_{\theta^*}(|\hat{S}|) &\geq  \mathbf{{L}}_1({\theta^*}, t^*(\alpha - \gamma))- \Psi(\epsilon)-\delta-  \P((\Lambda_{\alpha,\delta,\epsilon})^c)\\
&\geq \mathbf{{L}}_1({\theta^*}, t^*(\alpha)) - \mathcal{W}_T (t^*(\alpha)- t^*(\alpha - \gamma))- \Psi(\epsilon)-\delta-  \P((\Lambda_{\alpha,\delta,\epsilon})^c)\\
&\geq n^{-1}\esp_{\theta^*}(|S^*_{t^*(\alpha)}|) - \mathcal{W}_T (\mathcal{W}_{t^*,\alpha}( \gamma))- \Psi(\epsilon)-\delta-  \P((\Lambda_{\alpha,\delta,\epsilon})^c),
\end{align*}
by using \eqref{WTbound} and \eqref{Wtstarbound}.
Now consider another procedure
$\proc = (\widehat{\mathbf{Z}}, S) $ that controls the mFMR at level $\alpha$, that is, $\mFMR(\proc) \leq \alpha$.
By Lemma~\ref{oracle_optimality}, we then have $\esp_{\theta^*}(|S^*_{t^*(\alpha)}|) \geq \esp_{\theta^*}(|S|) $.

\paragraph{Step 5:  concentration}

Finally, we bound  $\P((\Lambda_{\alpha,\delta,\epsilon})^c)$ by using Lemma~\ref{lem:conc} with $x=(1 + 2c) \sqrt{\frac{\log n }{n}}$ (with $c$ defined in Lemma \ref{lem:conc}). This gives for $\delta=2x/s^*$, and $n\geq (2e)^3$ 
$$
\P((\Lambda_{\alpha,\delta,\epsilon})^c)\leq 5/n^2 + \P(\Omega_\epsilon^c).
$$

\section{Proofs of lemmas}

\paragraph{Proof of Lemma~\ref{lem:clusteringBayes}}

The clustering risk of  $\widehat{\mathbf{Z}}$ is given by
\begin{align*}
R(\widehat{\mathbf{Z}}) &= \esp_{\theta^*} \left( \underset{\sigma \in [Q]}{\min} \esp_{\theta^*} \left(\:n^{-1}\:  \sum_{i=1}^n \ind \{ Z_i \neq \sigma(\hat{Z}_i) \} \: \bigg|\: \mathbf X\right)\right) \\
&=  \esp_{\theta^*} \left( \underset{\sigma \in [Q]}{\min} \:n^{-1}\:  \sum_{i=1}^n \Pstar ( Z_i \neq \sigma(\hat{Z}_i) \:|\: \mathbf X) \right) \\
&\geq  \esp_{\theta^*} \left( \underset{\widehat{\mathbf Z }}{\min} \:n^{-1}\:  \sum_{i=1}^n \Pstar ( Z_i \neq \hat{Z}_i \:|\: \mathbf X) \right), 
\end{align*}
where, by independence, the minimum in the lower bound is achieved for the Bayes clustering.
Thus, $
R(\widehat{\mathbf Z } ) \geq n^{-1}\: \sum_{i=1}^n  \esp_{\theta^*} (T_i^* )
$.
Moreover, $n^{-1}\: \sum_{i=1}^n  \esp_{\theta^*} (T_i^* ) \ge R(\widehat{\mathbf{Z}}^*)$, since
\begin{align*} R(\widehat{\mathbf{Z}}^*) &= \esp_{\theta^*} \left( \underset{\sigma \in [Q]}{\min} \:n^{-1}\:  \sum_{i=1}^n \Pstar ( Z_i \neq \sigma(\hat{Z}^*_i) \:|\: \mathbf X) \right) \\
 & \leq \esp_{\theta^*} \left( \:n^{-1}\:  \sum_{i=1}^n \Pstar ( Z_i \neq \hat{Z}^*_i \:|\: \mathbf X) \right). 
\end{align*}
Thus, $\min_{\widehat{\mathbf{Z}}} R(\widehat{\mathbf{Z}}) =  R(\widehat{\mathbf{Z}}^*)$ and the proof is completed.

\paragraph{Proof of Lemma~\ref{lem:FMRTistar}}

Following  the reasoning of the proof of Lemma~\ref{lem:clusteringBayes}, we have 
\begin{align*}
\FMR_{\theta^*}(\proc) &= \esp_{\theta^*} \left( \min_{\sigma \in [Q]} \:\esp_{\theta^*} \left( \frac{\sum_{i\in S} \ind \{Z_i \neq \sigma(\hat{Z}^*_i) \}}{\max(|S|,1)}  \: \bigg|\: \mathbf X \right)\right) \\
&= \esp_{\theta^*} \left( \frac{\sum_{i\in S} T^*_i }{\max(|S|,1)}  \right) .
\end{align*}

 \paragraph{Proof of Lemma~\ref{lem:FMRPI}}

By definition, we have
\begin{align*}
\FMR (\widehat{\proc}^{\mbox{\tiny PI}}_\alpha) 
&= \esp_{\theta^*} \left( \min_{\sigma \in [Q]} \esp_{\theta^*} \left( \frac{\sum_{i=1}^n \ind_{Z_i \neq \sigma(\hat{Z}^{\mbox{\tiny PI}}_i(\mathbf{X}))} \ind \{ i\in \hat{S}^{\mbox{\tiny PI}}(\mathbf{X}) \} } {\max(|\hat{S}^{\mbox{\tiny PI}}(\mathbf{X})|,1) } \: \bigg|\: \mathbf X\right)\right) , 
\end{align*}
so that \eqref{eqFMRplugin} follows by a direct integration w.r.t. the latent variable $Z$.
 
\paragraph{Proof of Lemma~\ref{oracle_optimality}}

By Lemma~\ref{mFMR_properties}, we have that $\mFMR(\proc^*_t)$ is monotonous in $t$ and continuous w.r.t. $t$ on $(t^*(\alpha_c), 1]$, thus for $\alpha \in (\alpha_c, \bar{\alpha}]$,  $\mFMR(\proc^*_{t^*(\alpha)}) = \alpha$ which gives (i). 
For (ii), let $\proc = (\widehat{\mathbf{Z}}, S)$ be a procedure such that $\mFMR(\proc) \leq \alpha$. 
Let us consider the procedure $\proc'$ with the Bayes clustering $\widehat{\mathbf{Z}}^*$ and the same selection rule $S$. Since $\proc'$ is based on a Bayes clustering, by the same reasoning leading to $R(\widehat{ \mathbf Z }^*) \leq R(\widehat{ \mathbf Z })$ in Section \ref{FMR:sec:oracle}, we have that 
$
\mFMR(\proc')\leq \mFMR(\proc)\leq \alpha$
with 
$$
\mFMR(\proc') =  \frac{ \esp_{\theta^*} \left( \sum_{i\in S} T^*_i \right)  }{\esp_{\theta^*} ( |S|)}.
$$
Hence, 
\begin{equation}\label{interm2}
\esp_{\theta^*} \left( \sum_{i\in S} T^*_i \right) \leq \alpha \esp_{\theta^*} ( |S|).
\end{equation}
 Now we use an  argument similar to the proof of Theorem 1 in \cite{CSWW2019}. By definition of $S^*_{t^*(\alpha)}$, we have that
\begin{align*}
\sum_{i=1}^n \left(\ind_{i \in S^*_{t^*(\alpha)}(\mathbf{X})} - \ind_{i \in S(\mathbf{X})} \right) (T^*_i - t^*(\alpha))  \leq 0
\end{align*}
which we can rewrite as
\begin{align*}
\sum_{i=1}^n \left(\ind_{i \in S^*_{t^*(\alpha)}(\mathbf{X})} - \ind_{i \in S(\mathbf{X})} \right) (T^*_i - t^*(\alpha) + \alpha - \alpha) \leq 0
\end{align*} 
and so 
\begin{align*}
&\esp_{\theta^*} \left( \sum_{i=1}^n \left(\ind_{i \in S^*_{t^*(\alpha)}(\mathbf{X})} - \ind_{i \in S(\mathbf{X})} \right) (T^*_i - \alpha) \right)  \\
&\leq (t^*(\alpha) - \alpha)  \esp_{\theta^*} \left( \sum_{i=1}^n \left(\ind_{i \in S^*_{t^*(\alpha)}(\mathbf{X})} - \ind_{i \in S(\mathbf{X})} \right)  \right) \\
&= (t^*(\alpha) - \alpha) (\esp_{\theta^*}(|S^*_{t^*(\alpha)}|) - \esp_{\theta^*}(|S|)).
\end{align*} 
On the other hand, $\mFMR(\proc^*_{t^*(\alpha)}) = \alpha$ together with \eqref{interm2} implies that 
\begin{align*}
&\esp_{\theta^*} \left( \sum_{i=1}^n \left(\ind_{i \in S^*_{t^*(\alpha)}(\mathbf{X})} - \ind_{i \in S(\mathbf{X})} \right) (T^*_i - \alpha)  \right) \\
&= 
 \esp_{\theta^*} \left( \sum_{i \in S^*_{t^*(\alpha)}} T^*_i - \alpha |S^*_{t^*(\alpha)}| - \sum_{i \in S} T^*_i + \alpha |S| \right) \geq 0 . 
\end{align*}
Combining, the relations above provides 
\begin{align*}
(t^*(\alpha) - \alpha) (\esp_{\theta^*}(|S^*_{t^*(\alpha)}|) - \esp_{\theta^*}(|S|)) \geq 0 .
\end{align*}
Finally, noting that $t^*(\alpha) - \alpha > 0$ since $\alpha=\mFMR(\proc^*_{t^*(\alpha)}) < t^*(\alpha)$ by (ii) Lemma~\ref{mFMR_properties}, this gives $\esp_{\theta^*}(|S^*_{t^*(\alpha)}|) - \esp_{\theta^*}(|S|) \geq 0$ and  concludes the proof.

\paragraph{Proof of Lemma~\ref{lem:sigmaprecise}}
First, we have  by definition $ \ell_q(X_i, \theta^\sigma) =  \ell_{\sigma(q)}(X_i, \theta) $ and thus $T(X_i,\hat{\theta})=T(X_i,\hat{\theta}^\sigma)$ by taking the maximum over $q$. This gives $\widehat{S}^\sigma=\widehat{S}$ and yields the first equality.
Next, we have on $\Omega_\epsilon$,
\begin{align*}
\underset{\sigma' \in [Q]}{\min} \esp_{\theta^*} \left(\varepsilon_{\widehat{S}}(\sigma'(\widehat{\mathbf{Z}}) ,\mathbf{Z})\: \bigg|\: \mathbf X\right) &\leq \esp_{\theta^*} \left( \varepsilon_{\widehat{S}}(\sigma(\widehat{\mathbf{Z}}) ,\mathbf{Z}) \: \bigg|\: \mathbf X\right)  \\ 
&\leq \esp_{\theta^*} \left(\varepsilon_{\widehat{S}^\sigma}(\sigma(\widehat{\mathbf{Z}}) ,\mathbf{Z})  \: \bigg|\: \mathbf X\right) ,
\end{align*}
still because $\widehat{S}^\sigma=\widehat{S}$. Now observe that, 
\begin{align*}
 \esp_{\theta^*} \left(\varepsilon_{\widehat{S}^\sigma}(\sigma(\widehat{\mathbf{Z}}) ,\mathbf{Z})\: \bigg|\: \mathbf X\right) 
 &=\frac{1}{n} \sum_{i=1}^n  \Pstar(Z_i \neq \sigma(\bar{q}(X_i, \hat{\theta})) \: \big|\: \mathbf X) 
 \ind_{T(X_i, \hat{\theta}^\sigma) < \hat{t}(\alpha)}\\
 &= \frac{1}{n} \sum_{i=1}^n (1 - \ell^*_{i, \sigma(\bar{q}(X_i, \hat{\theta}))}) \ind_{T(X_i, \hat{\theta}^\sigma) < \hat{t}(\alpha)}\\
 &=  \mathbf{\widehat{M}}_0(\hat{\theta}^\sigma, \hat{t}(\alpha))  ,
\end{align*}
because $\sigma(\bar{q}(X_i, \hat{\theta}))= \bar{q}(X_i, \hat{\theta}^\sigma)$. This proves the result.

\section{Auxiliary results}

\begin{lemma} \label{mFMR_properties}
Let us consider the procedure $\proc^*_t$ defined in Section~\ref{FMR:sec:optimalproc} and the functional $\mFMR^*_t$ defined by \eqref{mFMRstart}. Then we have 
\begin{equation}\label{equmFMRproct}
\mFMR(\proc^*_t) = \frac{ \esp_{\theta^*} \left( \sum_{i=1}^n T^*_i \ind_{T^*_i < t}\right)  }{\esp_{\theta^*} \left( \sum_{i=1}^n \ind_{T^*_i < t}\right)} = \mFMR^*_t,\quad t\in [0,1].
\end{equation}
Moreover, the following properties for the function $t\in [0,1]\mapsto \mFMR(\proc^*_t)$:
\begin{enumerate}
\item [(i)] $\mFMR(\proc^*_t)$ is non-decreasing in $t\in [0,1]$ and, under Assumption~\ref{Tcontinuity}, it is increasing in $t\in (t^*(\alpha_c), t^*(\bar\alpha))$;
\item [(ii)] $\mFMR(\proc^*_t) < t$ for $t \in (0,1]$;
\item [(iii)] Under Assumption~\ref{Tcontinuity},  $ \mFMR(\proc^*_t)$ is continuous w.r.t. $t$ on $(t^*(\alpha_c), 1]$, where $t^*(\alpha_c)$ is given by \eqref{alphac}.
\end{enumerate}
\end{lemma}

\begin{proof}
First, \eqref{equmFMRproct} is obtained similarly than \eqref{FMRoracle}.
For proving (i), let $t_1, t_2 \in [0,1]$ such that $t_1 < t_2$. We show that $\mFMR(\proc^*_{t_1}) \leq \mFMR(\proc^*_{t_2})$. Remember here the convention $0/0=0$ and that $\mFMR(\proc^*_{t})=\esp_{\theta^*} \left( T(X, \theta^*) \:|\: T(X, \theta^*) < t \right)$. First, if $ \Pstar \left( T(X, \theta^*) < t_1 \right)=0$ then the result is immediate. Otherwise, we have that \begin{align*}
&\mFMR(\proc^*_{t_1}) - \mFMR(\proc^*_{t_2}) \\
&= ( \Pstar \left( T(X, \theta^*) < t_1 \right) )^{-1} \\
&\cdot  \esp_{\theta^*} \left(T(X,\theta^*) \left\{ \ind_{T(X,\theta^*) < t_1} - \frac{ \Pstar \left( T(X, \theta^*) < t_1 \right) } { \Pstar \left( T(X, \theta^*) < t_2 \right)} \ind_{T(X,\theta^*)< t_2} \right\} \right) ,
\end{align*}
where, given that $t_1 < t_2$, the quantity in the brackets is positive when $T(X,\theta^*) < t_1$ and is negative or zero otherwise. Hence,
\begin{align*}
&T(X,\theta^*) \left\{ \ind_{T(X,\theta^*) < t_1} - \frac{ \Pstar \left( T(X, \theta^*) < t_1 \right) } { \Pstar \left( T(X, \theta^*) < t_2 \right)} \ind_{T(X,\theta^*)< t_2} \right\} \\
&\leq 
t_1 \left\{ \ind_{T(X,\theta^*) < t_1} - \frac{ \Pstar \left( T(X, \theta^*) < t_1 \right) } { \Pstar \left( T(X, \theta^*) < t_2 \right)} \ind_{T(X,\theta^*)< t_2} \right\} .
\end{align*}
Taking the expectation makes the right-hand-side equal to zero, from which the result follows. 
Now, to show the increasingness, if $\mFMR(\proc^*_{t_1}) = \mFMR(\proc^*_{t_2})$ for $t^*(\alpha_c)<t_1<t_2<t^*(\bar\alpha)$, then the above reasoning shows that
\begin{align*}
 (T(X,\theta^*)-t_1) \left\{ \ind_{T(X,\theta^*) < t_1} - \frac{ \Pstar \left( T(X, \theta^*) < t_1 \right) } { \Pstar \left( T(X, \theta^*) < t_2 \right)} \ind_{T(X,\theta^*)< t_2} \right\}  \leq 0
\end{align*}
and has an expectation equal to $0$. Hence, given that $T(X,\theta^*)$ is continuous, we derive that almost surely 
$$
\Pstar \left( T(X, \theta^*) < t_2 \right) \ind_{T(X,\theta^*)< t_1} = \Pstar \left( T(X, \theta^*) < t_1 \right)  \ind_{T(X,\theta^*)< t_2},
$$
that is, $\Pstar(t_1\leq T^*_i<t_2)=0$, which is excluded by Assumption~\ref{Tcontinuity}. This entails $\mFMR(\proc^*_{t_1}) < \mFMR(\proc^*_{t_2})$.

For proving (ii), let $t>0$. If $\Pstar \left( T(X, \theta^*) < t \right)=0$ then the result is immediate. Otherwise, we have that $\mFMR(\proc^*_t) - t = (\Pstar \left( T(X, \theta^*) < t \right))^{-1} \esp_{\theta^*} ( (T(X,\theta^*) - t) \ind \{ T(X,\theta^*)  < t \} )$. The latter is clearly not positive, and is moreover negative because $(T(X,\theta^*) - t) \ind \{ T(X,\theta^*)  < t \}\leq 0$ and $\Pstar(T(X,\theta^*) = t)=0$ by Assumption~\ref{Tcontinuity}.

For proving (iii), let $\psi_0(t) =  \esp_{\theta^*} ( T(X,\theta^*) \ind \{ T(X,\theta^*) < t \} )$ and $\psi_1(t) = \Pstar  (T(X,\theta^*) < t  )$, the numerator and denominator of $\mFMR(\proc^*_t)=\mFMR^*_t$, respectively. $\psi_1(t)$ is non-decreasing in $t$, with $\psi_1(0)=0$ and $\psi_1(1) > 0$. Moreover, $\psi_0$ and $\psi_1$ are both continuous under Assumption~\ref{Tcontinuity}. Then denote by $t_c$ the largest $t$ s.t. $\psi_1(t)=0$. $\psi_1$ is zero on $[0, t_c]$ then strictly positive and non-decreasing on $(t_c, 1]$, and we have that $t_c = t^*(\alpha_c)$. Hence, $\mFMR(\proc^*_t)$ is zero on $[0, t_c]$ then strictly positive and continuous on $(t_c, 1]$. 
\end{proof}

\begin{remark}
With the notation of the above proof, $t\mapsto \mFMR(\proc^*_t)$ may have a discontinuity point at $t_c$ since for $t_n \underset{t_n > t_c}{\rightarrow} t_c$, as $\psi_1(t_n) \rightarrow 0$, one does not necessarily have that $\mFMR(\proc^*_t) \rightarrow 0$. 
\end{remark}

\begin{lemma}[Expression of plug-in procedure as a thresholding rule]\label{lempluginthres}
For any $\alpha\in (0,1)$, let us consider the plug-in procedure $\widehat{\proc}^{\mbox{\tiny PI}}_\alpha=(\widehat{\mathbf{Z}}^{\mbox{\tiny PI}}, \widehat{S}^{\mbox{\tiny PI}}_\alpha)$ defined by Algorithm~\ref{algo:pi} and denote $K=|\widehat{S}^{\mbox{\tiny PI}}_\alpha|$ the maximum of the $k\in \{0,\dots,n\}$ such that $\max(k,1)^{-1} \sum_{j=1}^k \hat{T}_{(j)} \leq \alpha$ for 
$\hat{T}_i =1 - \max_q \ell_q(X_i, \hat{\theta}),$ $1\leq i\leq n$. 
Consider also 
$
\hat{t}(\alpha) 
$
defined by \eqref{thatalpha}. Let Assumption \ref{Tcontinuity} be true and consider an estimator $\hat{\theta}$ satisfying Assumption~\ref{ass_estim}. 
Then it holds that $\hat{t}(\alpha)=\hat{T}_{(K+1)}$ and 
\begin{align*}
\widehat{S}^{\mbox{\tiny PI}}_\alpha =\{ i\in \{1,\dots,n\}\::\: \hat{T}_i < \hat{t}(\alpha) \}.
\end{align*}

\end{lemma}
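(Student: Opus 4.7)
\textbf{Proof plan for Lemma~\ref{lempluginthres}.}

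The plan is to describe $\mathbf{\widehat{L}}(\hat{\theta},\cdot)$ explicitly as a step function of $t$ whose jumps occur at the order statistics $\hat{T}_{(1)}<\dots<\hat{T}_{(n)}$, and then read off both $\hat{t}(\alpha)$ and $\widehat{S}^{\mbox{\tiny PI}}_\alpha$ from that description. The first preparatory step is to observe that, almost surely, the values $\hat{T}_1,\dots,\hat{T}_n$ are all distinct. Indeed, Assumption~\ref{Tcontinuity} says that for every fixed $\theta\in\Theta$ the law of $T(X,\theta)$ under $\Pstar$ has no atoms; since $\hat{\theta}$ takes values in a countable set $\mathcal{D}$ by Assumption~\ref{ass_estim}, a union bound over $\mathcal{D}$ and over pairs $(i,j)$ yields $\Pstar(\exists\, i\neq j:\hat{T}_i=\hat{T}_j)=0$. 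We may therefore work on the almost sure event where $\hat{T}_{(1)}<\dots<\hat{T}_{(n)}$, using the conventions $\hat{T}_{(0)}=0$ and $\hat{T}_{(n+1)}=1$.

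On this event, for every $k\in\{0,\dots,n\}$ and $t\in(\hat{T}_{(k)},\hat{T}_{(k+1)}]$, the strict inequality $T(X_i,\hat\theta)<t$ is satisfied by exactly the $k$ smallest order statistics, so
\begin{equation*}
\mathbf{\widehat{L}}_1(\hat{\theta},t)=\tfrac{k}{n},\qquad
\mathbf{\widehat{L}}_0(\hat{\theta},t)=\tfrac{1}{n}\sum_{j=1}^k\hat{T}_{(j)},\qquad
\mathbf{\widehat{L}}(\hat{\theta},t)=\tfrac{1}{\max(k,1)}\sum_{j=1}^k\hat{T}_{(j)},
\end{equation*}
using the $0/0=0$ convention for $k=0$. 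Hence the inequality $\mathbf{\widehat{L}}(\hat{\theta},t)\leq \alpha$ depends only on the number $k$ of order statistics strictly below $t$, and is equivalent to the condition defining the index $K$ in the statement.

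From here the conclusion is routine. By definition of $K$ as the maximum $k\in\{0,\dots,n\}$ for which the condition holds, $\mathbf{\widehat{L}}(\hat{\theta},t)\leq\alpha$ is true on every interval $(\hat{T}_{(k)},\hat{T}_{(k+1)}]$ with $k\leq K$ satisfying the constraint, and in particular on $(\hat{T}_{(K)},\hat{T}_{(K+1)}]$; moreover for $t>\hat{T}_{(K+1)}$ one has $\mathbf{\widehat{L}}_1(\hat\theta,t)\geq (K+1)/n$, and since no $k>K$ satisfies the constraint, $\mathbf{\widehat{L}}(\hat{\theta},t)>\alpha$ on every $(\hat{T}_{(k)},\hat{T}_{(k+1)}]$ with $k>K$. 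Taking the supremum in \eqref{thatalpha} thus yields $\hat{t}(\alpha)=\hat{T}_{(K+1)}$. Finally, the selection step~6 of Algorithm~\ref{algo:oracle} keeps the indices of the $K$ smallest $\hat{T}_i$, i.e.\ $\{i:\hat{T}_i\leq \hat{T}_{(K)}\}$, which on our event coincides with $\{i:\hat{T}_i<\hat{T}_{(K+1)}\}=\{i:\hat{T}_i<\hat{t}(\alpha)\}$, also in the degenerate case $K=0$.

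The only subtle point is the no-ties argument, which requires the combination of both assumptions and the union bound over $\mathcal{D}$; once that is in place the rest is a direct reading of the piecewise-constant function $\mathbf{\widehat{L}}(\hat{\theta},\cdot)$. I do not expect any serious obstacle beyond bookkeeping of the boundary cases $k=0$ and $k=n$.
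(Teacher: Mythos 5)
Your proof is correct and follows essentially the same route as the paper's: the crux in both is the no-ties argument obtained by a union bound over the countable set $\mathcal{D}$ from Assumption~\ref{ass_estim} combined with the continuity of $T(X,\theta)$ from Assumption~\ref{Tcontinuity}. The paper simply declares the remaining identification of $\hat{t}(\alpha)$ and $\widehat{S}^{\mbox{\tiny PI}}_\alpha$ ``immediate'' once $\hat{T}_{(K)}<\hat{T}_{(K+1)}$, whereas you spell out the piecewise-constant structure of $\mathbf{\widehat{L}}(\hat{\theta},\cdot)$; this is just the bookkeeping the paper omits.
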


\begin{proof}
If $\hat{T}_{(K)}<\hat{T}_{(K+1)}$ then the result is immediate. Thus it suffices to show that $\hat{T}_{(K)} = \hat{T}_{(K+1)}$ occurs with probability $0$. 
From Assumption~\ref{ass_estim} (with the countable set $\mathcal{D}$ defined therein), we have 
\begin{align*}
\Pstar(\hat{T}_{(K)} = \hat{T}_{(K+1)}) \leq   \Pstar \left( \bigcup_{i \neq j} \{\hat{T}_i = \hat{T}_j\} \right) \leq  \Pstar \left( \bigcup_{\theta \in  \mathcal{D}} \bigcup_{i \neq j} \{T(X_i, \theta) = T(X_j, \theta) \}\right).
\end{align*}
Now, the right term is a countable union of events which are all of null probability under Assumption \ref{Tcontinuity}. The result follows. 
\end{proof}

\begin{lemma}\label{lemcdftheta}
	We have for all $\theta\in \Theta$,
	\begin{align}
		\sup_{t\in [0,1]} |\mathbf{L}_1(\theta, t) - \mathbf{L}_1(\theta^*, t)|  &\leq \Psi(\|\theta^*-\theta\|); \label{diffL1}\\
		\sup_{t\in [0,1]} |\mathbf{L}_0(\theta, t) - \mathbf{L}_0(\theta^*, t)|  &\leq 2\Psi(\|\theta^*-\theta\|); \label{diffL0}\\
		\sup_{t\in [t^*((\alpha + \alpha_c)/2), 1]} |\mathbf{L}(\theta, t) - \mathbf{L}(\theta^*, t)|  &\leq 3\Psi(\|\theta^*-\theta\|)/s^*; \label{diffL}\\
		\sup_{t\in [0,1]} |\mathbf{M}_0(\theta, t) - \mathbf{M}_0(\theta^*, t)|  &\leq 3\Psi(\|\theta^*-\theta\|); \label{diffM0}\\
		\sup_{t\in [t^*((\alpha + \alpha_c)/2), 1]} |\mathbf{M}(\theta, t) - \mathbf{M}(\theta^*, t)|  &\leq 4\Psi(\|\theta^*-\theta\|)/s^*; \label{diffM}
	\end{align}
	where $\alpha\in (\alpha_c,\bar\alpha]$ and $s^*>0$ is given by \eqref{equsstar}.  In addition, for all $\theta\in \Theta$ and $t,t'\in [0,1]$,
	\begin{align}
		|\mathbf{L}_0(\theta, t) - \mathbf{L}_0(\theta, t')|  &\leq 4\Psi(\|\theta^*-\theta\|)+ \mathcal{W}_T(|t-t'|). \label{diffL0t}
	\end{align}
	
\end{lemma}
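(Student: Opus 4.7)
The workhorse observation is that $|T(X,\theta) - T(X,\theta^*)| \le \max_{q}|\ell_q(X,\theta) - \ell_q(X,\theta^*)|$, so $\esp_{\theta^*}|T(X,\theta) - T(X,\theta^*)| \le \mathcal{W}_\ell(\|\theta - \theta^*\|_2)$; moreover the restriction $\delta \in (0,1)$ in \eqref{Psi} forces $\mathcal{W}_\ell(\epsilon)/\delta \ge \mathcal{W}_\ell(\epsilon)$, hence $\Psi(\epsilon) \ge \mathcal{W}_\ell(\epsilon)$, which lets me absorb stray $\mathcal{W}_\ell$ terms into the $\Psi$ bound.

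For \eqref{diffL1}, I fix $\delta \in (0,1)$ and set $A_\delta = \{|T(X,\theta) - T(X,\theta^*)| < \delta\}$. On $A_\delta$, the event $\{T(X,\theta) < t,\, T(X,\theta^*) \ge t\}$ forces $T(X,\theta^*) \in [t, t+\delta)$, a set of $P_{\theta^*}$-probability at most $\mathcal{W}_T(\delta)$; by Markov, $\Pstar(A_\delta^c) \le \mathcal{W}_\ell(\|\theta - \theta^*\|_2)/\delta$. The same bound applies to the symmetric event $\{T(X,\theta) \ge t,\, T(X,\theta^*) < t\}$, and since $F_\theta(t) - F_{\theta^*}(t)$ (with $F_\theta(s) = \Pstar(T(X,\theta) < s)$) equals the \emph{difference} rather than the sum of those two probabilities, I obtain $|\mathbf{L}_1(\theta,t) - \mathbf{L}_1(\theta^*,t)| \le \mathcal{W}_T(\delta) + \mathcal{W}_\ell(\|\theta-\theta^*\|_2)/\delta$, and taking the infimum in $\delta$ gives $\Psi$.

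For \eqref{diffL0} and \eqref{diffL0t}, I use that $F_\theta$ is continuous by Assumption~\ref{Tcontinuity} and perform integration by parts: $\mathbf{L}_0(\theta,t) = \int_0^t s\, dF_\theta(s) = t\, F_\theta(t) - \int_0^t F_\theta(s)\, ds$. Applying \eqref{diffL1} both pointwise and under the integral yields $|\mathbf{L}_0(\theta, t) - \mathbf{L}_0(\theta^*, t)| \le 2t\,\Psi \le 2\Psi$. Then \eqref{diffL0t} follows by triangle inequality through $\theta^*$, which costs $4\Psi$, together with the window bound $|\mathbf{L}_0(\theta^*, t) - \mathbf{L}_0(\theta^*, t')| \le \Pstar(T(X,\theta^*) \in [\min(t,t'), \max(t,t'))) \le \mathcal{W}_T(|t-t'|)$.

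For \eqref{diffM0}, I use $\mathbf{M}_0(\theta^*,t) = \mathbf{L}_0(\theta^*,t)$ to split the difference into $[\mathbf{M}_0(\theta,t) - \mathbf{L}_0(\theta,t)] + [\mathbf{L}_0(\theta,t) - \mathbf{L}_0(\theta^*,t)]$; the first bracket is an expectation whose integrand is dominated by $\max_q|\ell_q(X,\theta) - \ell_q(X,\theta^*)|$, giving $\mathcal{W}_\ell \le \Psi$, and the second is at most $2\Psi$ by \eqref{diffL0}. For the ratios \eqref{diffL} and \eqref{diffM}, I apply $|a/b - c/d| \le (a/b)|b - d|/d + |a - c|/d$ with $(a,b,c,d)$ the numerators and denominators at $\theta$ and $\theta^*$; since $\mathbf{L}_0 \le \mathbf{L}_1$ and $\mathbf{M}_0 \le \mathbf{L}_1$ one has $a/b \le 1$, and monotonicity of $\mathbf{L}_1(\theta^*,\cdot)$ together with the definition of $s^*$ gives $d = \mathbf{L}_1(\theta^*,t) \ge s^*$ on the stated $t$-range; plugging in \eqref{diffL1}, \eqref{diffL0} and \eqref{diffM0} then yields the advertised $3\Psi/s^*$ and $4\Psi/s^*$ constants. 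The main subtlety, and the only step that requires genuine care, is the cancellation used in \eqref{diffL1}: naively summing the two one-sided deviation probabilities would inflate the bound by a factor of two and propagate through every subsequent inequality, so it is essential to exploit that only their difference, not their sum, controls the CDF gap.
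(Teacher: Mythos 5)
Your proposal is correct, and for the core inequality \eqref{diffL1} and the two ratio bounds \eqref{diffL}, \eqref{diffM} it follows the paper's argument essentially verbatim: the same $\mathcal{W}_T(\delta)+\mathcal{W}_{\ell}(\epsilon)/\delta$ split (the paper likewise takes the \emph{maximum} of the two one-sided window probabilities rather than their sum, so your cautionary remark about the factor of two matches what is actually done), the same $|a/b-c/d|$ decomposition, and the same use of $\mathbf{L}_0\leq \mathbf{L}_1$ and $\mathbf{L}_1(\theta^*,t)\geq s^*$ by monotonicity. You deviate mildly in two places. For \eqref{diffL0} the paper decomposes the integrand directly, writing $\mathbf{L}_0(\theta,t)-\mathbf{L}_0(\theta^*,t)=\esp_{\theta^*}[T(X,\theta)(\mathbbm{1}_{T(X,\theta)<t}-\mathbbm{1}_{T(X,\theta^*)<t})]+\esp_{\theta^*}[\mathbbm{1}_{T(X,\theta^*)<t}(T(X,\theta)-T(X,\theta^*))]$ and bounding the two terms by $\Psi$ each, whereas you integrate by parts against the (continuous) CDF; both give $2\Psi$, yours trading a pointwise integrand estimate for an appeal to Assumption~\ref{Tcontinuity} at a generic $\theta$ (which the assumption does grant). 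For \eqref{diffM0} the paper compares $U(X,\theta)$ with $U(X,\theta^*)$, exploiting $U(X,\theta^*)\leq U(X,\theta)$ and a two-step telescoping of the posteriors to get $2\Psi$ for that piece plus $\Psi$ for the indicator swap; you instead compare $\mathbf{M}_0(\theta,t)$ with $\mathbf{L}_0(\theta,t)$, noting $|U(X,\theta)-T(X,\theta)|=|\ell_{\bar q(X,\theta)}(X,\theta)-\ell_{\bar q(X,\theta)}(X,\theta^*)|\leq \max_q|\ell_q(X,\theta)-\ell_q(X,\theta^*)|$, and then recycle \eqref{diffL0}. Your route is arguably the tidier bookkeeping since it reuses an already-proved bound; both land on $3\Psi$.
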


\begin{proof}
	Fix $\theta\in \Theta$ and $t\in [0,1]$. We have for any $\delta>0$,
	\begin{align*}
		&\left| \Pstar (T(X, \theta) < t) -\Pstar (T(X, \theta^*) < t)\right|  \\
		&\leq \left( \Pstar (T(X, \theta^*) < t + \delta) -\Pstar (T(X, \theta^*) < t)\right) \vee  \left( \Pstar (T(X, \theta^*) < t) -\Pstar (T(X, \theta^*) < t-\delta)\right) \\
		&\:\:\:+\Pstar(|T(X, \theta^*)-T(X, \theta)|> \delta)\\
		&\leq \mathcal{W}_T(\delta) + \esp_{\theta^*}(|T(X, \theta^*)-T(X, \theta)|)/ \delta.
	\end{align*}
	In addition, by definition \eqref{equTXtheta}, 
	\begin{align*}
		|T(X, \theta^*)-T(X, \theta)| &\leq |\max_{1\leq q\leq Q} \ell_{q}(X,\theta^*)-\max_{1\leq q\leq Q} \ell_{q}(X,\theta)|\\
		&\leq \max_{1\leq q\leq Q} | \ell_{q}(X,\theta^*)- \ell_{q}(X,\theta)|.
	\end{align*}
	Hence, 
	$$
	\left| \Pstar (T(X, \theta) < t) -\Pstar (T(X, \theta^*) < t)\right|  \leq \inf_{\delta\in (0,1)}\left\{\mathcal{W}_T(\delta) + \mathcal{W}_{\ell}(\|\theta^*-\theta\|)/ \delta\right\} \leq \Psi(\|\theta^*-\theta\|),
	$$
	which establishes \eqref{diffL1}. 
	
	Next, we have 
	\begin{align*}
		& \mathbf{L}_0(\theta, t) - \mathbf{L}_0(\theta^*, t) \\
		&=
		\esp_{\theta^*} [ T(X, \theta) (\mathbbm{1}_{T(X,\theta) < t} -  \mathbbm{1}_{T(X,\theta^*) < t}) + \mathbbm{1}_{T(X,\theta^*) < t}( T(X, \theta) - T(X, \theta^*))  ]  \\
		&\leq t |\Pstar(T(X, \theta) < t) - \Pstar(T(X, \theta^*) < t)| \\
		&\quad + |\esp_{\theta^*} [ \mathbbm{1}_{T(X,\theta^*) < t}( T(X, \theta) - T(X, \theta^*))]| \\ 
		&\leq |\Pstar(T(X, \theta) < t) - \Pstar(T(X, \theta^*) < t)| + \esp_{\theta^*}  |T(X, \theta) - T(X, \theta^*)| \\
		&\leq 2 \Psi(\|\theta^*-\theta\|)
	\end{align*}
	By exchanging the role of $\theta$ and $\theta^*$ in the above reasoning, the same bound holds for $\mathbf{L}_0(\theta^*, t) - \mathbf{L}_0(\theta, t)$, which gives \eqref{diffL0}.
	To prove \eqref{diffL}, we use for any $t\in [t^*(\frac{\alpha + \alpha_c}{2}), 1]$, 
	\begin{align*}
		&\left| \frac{\mathbf{L}_0(\theta, t)}{\mathbf{L}_1(\theta, t)} - \frac{\mathbf{L}_0(\theta^*, t)}{\mathbf{L}_1(\theta^*, t)} \right| \\
		&\leq \left| \frac{\mathbf{L}_0(\theta, t) - \mathbf{L}_0(\theta^*, t)}{\mathbf{L}_1(\theta^*, t)} \right| + \mathbf{L}_0(\theta, t) \left| \frac{1}{\mathbf{L}_1(\theta^*, t)} - \frac{1}{\mathbf{L}_1(\theta, t)} \right| \\
		&\leq 2\Psi(\|\theta^*-\theta\|)/s^* + \frac{1}{\mathbf{L}_1(\theta^*, t)} \frac{\mathbf{L}_0(\theta, t)}{\mathbf{L}_1(\theta, t)} \left| \Pstar(T(X, \theta^*) < t) - \Pstar(T(X, \theta) < t) \right| \\
		&\leq 3\Psi(\|\theta^*-\theta\|)/s^*, 
	\end{align*}
	because $\mathbf{L}_0(\theta, t)\leq \mathbf{L}_1(\theta, t)$  and $\mathbf{L}_1(\theta^*, t)\geq s^*$ by monotonicity.
	Similarly to the bound on $\mathbf{L}_0$, we derive
	\begin{align*}
		& |\mathbf{M}_0(\theta, t) - \mathbf{M}_0(\theta^*, t)| \\
		&\leq |\Pstar(T(X, \theta) < t) - \Pstar(T(X, \theta^*) < t)| + \esp_{\theta^*} |U(X, \theta) - U(X, \theta^*)|. 
	\end{align*}
	Define $\bar{q}(X, \theta) \in \underset{q \in \{1, \dots, Q \}}{\argmax}  \ell_q(X, \theta)$. Now, since $U(X, \theta^*)\leq U(X, \theta)$ by definition \eqref{equU}, we have
	\begin{align*}
		\esp_{\theta^*} |U(X, \theta) - U(X, \theta^*)|&=\esp_{\theta^*}  [ U(X, \theta) - U(X, \theta^*) ] \\
		&= \esp_{\theta^*}  [ \ell_{\bar{q}(X, \theta)}(X, \theta^*) -\ell_{\bar{q}(X, \theta^*)}(X, \theta^*) ] \\
		&= \esp_{\theta^*}  [ \ell_{\bar{q}(X, \theta)}(X, \theta^*) - \ell_{\bar{q}(X, \theta)}(X, \theta) \\
		&\quad + \ell_{\bar{q}(X, \theta)}(X, \theta) -\ell_{\bar{q}(X, \theta^*)}(X, \theta^*) ] \\
		&\leq \esp_{\theta^*} [ \max_{1\leq q\leq Q} | \ell_q(X, \theta^*) - \ell_q(X, \theta) | ] \\
		&\quad + \esp_{\theta^*} [ \max_{1\leq q\leq Q} \ell_q(X, \theta) - \max_{1\leq q\leq Q} \ell_q(X, \theta^*) ] \\
		&\leq 2 \esp_{\theta^*} [ \max_{1\leq q\leq Q} | \ell_q(X, \theta^*) - \ell_q(X, \theta) | ] \leq 2 \Psi(\|\theta^*-\theta\|).\end{align*}
	This proves \eqref{diffM0} and leads to \eqref{diffM} by following the reasoning that provided \eqref{diffL}.

Next, we have for $0\leq t'\leq t\leq 1$, by \eqref{diffL0}, 
\begin{align*}
	|\mathbf{L}_0(\theta, t) - \mathbf{L}_0(\theta, t')| \leq |\mathbf{L}_0(\theta^*, t) - \mathbf{L}_0(\theta^*, t')| + 4\Psi(\|\theta^*-\theta\|).
\end{align*}
Moreover, 
\begin{align*}
	|\mathbf{L}_0(\theta^*, t) - \mathbf{L}_0(\theta^*, t')| 
	&= \mathbf{L}_0(\theta^*, t) - \mathbf{L}_0(\theta^*, t')
	=\esp_{\theta^*} [ T(X, \theta^*) \mathbbm{1}_{t'\leq T(X,\theta^*) < t}   ]  \\
	&\leq \esp_{\theta^*} [  \mathbbm{1}_{t'\leq T(X,\theta^*) < t}   ] \\
	&= \Pstar(T(X,\theta^*) < t)-\Pstar(T(X,\theta^*) < t'),
\end{align*}
which is below $\mathcal{W}_T(t-t')$ by \eqref{WTbound}. This leads to \eqref{diffL0t}.
\end{proof}

\begin{lemma}[Concentration of $\mathbf{\widehat{L}_0}$ \eqref{L0chap}, $\mathbf{\widehat{L}_1}$ \eqref{L1chap}, and $\mathbf{\widehat{M}_0}$ \eqref{equM0}]
\label{lem:conc}
Let Assumption~\ref{Tcontinuity} be true. 
Recall  $\VCdim, \VCdim_{-}$ defined by \eqref{VCdim}, \eqref{VCdimminus} respectively, set 
$c:= 14Q\sqrt{\VCdim} + 7Q^2\sqrt{\VCdim_{-}}$ 
and consider any countable set $\subsetEsti\subset \Theta$. 
 For all $t \in (0,1]$ and for $n\geq (2e)^3$, we have
\begin{align}
\Pstar \left( \underset{\theta \in \subsetEsti}{\sup} \left| \mathbf{\widehat{L}}_0(\theta, t) - \mathbf{L}_0(\theta, t) \right|  > x \right) &\leq n^{-2};  \label{proba1}\\
\Pstar \left( \underset{\theta \in \subsetEsti}{\sup} \left| \mathbf{\widehat{L}}_1(\theta, t) - \mathbf{L}_1(\theta, t) \right|  > x \right) &\leq n^{-2}; \label{proba2}\\
\Pstar \left( \underset{\theta \in \subsetEsti}{\sup} \left| \mathbf{\widehat{M}}_0(\theta, t) - \mathbf{M}_0(\theta, t) \right|  > x \right) &\leq n^{-2}, \label{proba3}
\end{align}
for any $x  \geq (1 +2c) \sqrt{\frac{\log n}{n}}$ and provided that $(1 +2c) \sqrt{\frac{\log n}{n}}\leq 1$. 
\end{lemma}


\begin{proof} 
For a fixed $t \in (0,1]$, let $\mathscr{F}_{L_0} = \{ T(., \theta)\ind \{T(., \theta) \leq t \}, \theta \in \subsetEsti \}$, 
$\mathscr{F}_{L_1} = \{ \ind \{T(., \theta) \leq t \}, \theta \in \subsetEsti \}$, and $\mathscr{F}_{M_0} = \{ U(., \theta)\ind \{T(., \theta) \leq t \}, \theta \in \subsetEsti \}$. We apply Lemma~\ref{lem:massart} and  Lemma~\ref{lem:rademacher} for $\xi_i=X_i$, $1\leq i\leq n$, $b=1$, $a=0$ and for each $\mathscr{F}\in \{\mathscr{F}_{L_0},\mathscr{F}_{L_1},\mathscr{F}_{M_0}\}$ to get that the corresponding probability in \eqref{proba1}-\eqref{proba2}-\eqref{proba3} is at most $n^{-2}$ by taking 
$$
x\geq \sqrt{\frac{\log n}{n}} + 2 \esp \Radm_n (\mathscr{F}),
$$
where $\Radm_n (\mathscr{F})$ denotes the Rademacher complexity of $\mathscr{F}$, see \eqref{radcomplex}.
We now bound each $\Radm_n (\mathscr{F})$ by using Lemma \ref{lem:Radproduct}:
\begin{align}
	\esp \Radm_n (\mathscr{F}_{L_0}) &\leq \esp \Radm_n (\mathscr{F}_{L_1}) + \esp \Radm_n (\{T(., \theta), \theta \in \Theta\})\nonumber \\
	&\leq \esp \Radm_n (\mathscr{F}_{L_1}) + \sum_{q=1}^Q \esp \Radm_n (\{ \ell_q(., \theta), \theta \in \Theta \}) \label{eq:RadT} ;\\
	\esp \Radm_n (\mathscr{F}_{L_1}) &\leq \sum_{q=1}^Q  \esp \Radm_n (\{ \ind \{\ell_q(., \theta) < 1-t \}, \theta \in \Theta \}) \label{eq:RadL1}; \\
	\esp \Radm_n (\mathscr{F}_{M_0}) &\leq \esp \Radm_n (\mathscr{F}_{L_1}) + \esp \Radm_n (\{U(., \theta), \theta \in \Theta\}),\nonumber
\end{align}
where for \eqref{eq:RadT} and \eqref{eq:RadL1}, we used that $T(., \theta) = 1-\max_q \ell_q(., \theta)$ and $\ind \{T(., \theta) \leq t \}  = 1-\prod_{q=1}^Q \ind \{\ell_q(., \theta) < 1-t \}$ and the fact that the variables $\ell_q(X_i, \theta) $ are continuous by Assumption~\ref{Tcontinuity}. Similarly, we have $U(., \theta) = \sum_{q=1}^{Q} \ell_q(., \theta^*) \prod_{k \neq q}\ind \{ \ell_q(., \theta) \geq \ell_k(., \theta) \}$.
Hence, Lemma \ref{lem:Radproduct} once again entails that
\begin{align}
	\esp \Radm_n (\{U(., \theta), \theta \in \Theta\}) &\leq 
	\sum_{q=1}^Q \sum_{k=1, k\neq q}^Q \esp \Radm_n (\{ \ind\{\ell_q(., \theta) - \ell_k(., \theta) \geq 0  \}, \theta \in \Theta\}) \label{eq:RadU} \nonumber \\
	&+  \sum_{q=1}^Q \esp \Radm_n (\{ \ell_q(., \theta), \theta \in \Theta \}). 
\end{align}

%

To bound both $\esp \Radm_n (\{\ell_q(., \theta), \theta \in \Theta\})$ and $\esp \Radm_n (\{ \ind \{\ell_q(., \theta) < 1-t \}, \theta \in \Theta \})$, we use the results of \cite{baraud2016bounding} (more specifically the proof of Theorem 1 therein), to obtain that they are bounded by 
\begin{align*}
  \sqrt{\VCdim\log \frac{2en}{\VCdim}} \frac{\sqrt{2}}{\sqrt{n}} + 4\VCdim\log \frac{2en}{\VCdim} \frac{1}{n}\leq 7  \sqrt{\VCdim \frac{\log n }{n} } , 
 \end{align*}
 provided that 
 $\VCdim (\log n) /n\leq 1$ and for $n\geq (2e)^3$.
 Similarly, $\esp \Radm_n (\{ \ind\{\ell_q(., \theta) - \ell_k(., \theta) \geq 0  \}, \theta \in \Theta\})$ is bounded by 
 \begin{align*}
  \sqrt{\VCdim_{-}\log \frac{2en}{\VCdim_{-}}} \frac{\sqrt{2}}{\sqrt{n}} + 4\VCdim_{-}\log \frac{2en}{\VCdim_{-}} \frac{1}{n}\leq 7  \sqrt{\VCdim_- \frac{\log n }{n} }, \end{align*}
 $\VCdim_- (\log n) /n\leq 1$ and for $n\geq (2e)^3$.
Combining this with what is above entails
\begin{align}
	\esp \Radm_n (\mathscr{F}_{L_1})
	&\leq 7 Q  \sqrt{\VCdim \frac{\log n }{n} } \nonumber\\ 
	\esp \Radm_n (\mathscr{F}_{L_0}) 
	&\leq 14 Q  \sqrt{\VCdim \frac{\log n }{n} } \nonumber\\ 
	\esp \Radm_n (\mathscr{F}_{M_0}) 
	&\leq 	14 Q  \sqrt{\VCdim \frac{\log n }{n} } +7 Q^2  \sqrt{\VCdim_- \frac{\log n }{n} } \nonumber.
\end{align}
In particular, all expectations are upper-bounded by $c \sqrt{\frac{\log n}{n}}$, which leads to the result.
\end{proof}

\begin{lemma} \label{lem:Radproduct} 
 If $\mathcal{F}$ is a class of indicator functions and $\mathcal{G}$ is a class of functions from $\mathbb{R}^d$ to $[0,1]$, we have
 \begin{align*}
 \esp \Radm_n (\mathcal{F}\cdot \mathcal{G})&\leq \esp \Radm_n (\mathcal{F})+\esp \Radm_n (\mathcal{G})\\
\esp \Radm_n (\max(\mathcal{F}, \mathcal{G}))&\leq \esp \Radm_n (\mathcal{F})+\esp \Radm_n (\mathcal{G}),
 \end{align*}
 where we denoted $\mathcal{F}\cdot \mathcal{G}=\{fg, f\in \mathcal{F},g\in \mathcal{G}\}$ and $\max(\mathcal{F}, \mathcal{G})=\{f\vee g,f\in \mathcal{F},g\in \mathcal{G} \}$.
 \end{lemma}
 
 \begin{proof}
 We have
 \begin{align*}
  \esp \Radm_n (\mathcal{F}\cdot \mathcal{G})&= \esp\left(\sup_{f\in \mathcal{F},g\in \mathcal{G}} \left|\sum_{i=1}^n \varepsilon_i f.g(X_i)\right|\right) \\
 &\leq \esp\left(\sup_{f\in \mathcal{F},g\in \mathcal{G}} \left|\sum_{i=1}^n \varepsilon_i (f(X_i) + g(X_i)) \right|\right) \\
&\leq  \Radm_n (\mathcal{F}+\mathcal{G}) ,
 \end{align*}
 because $fg=(f+g-1)_+=0.5 (f+g-1+|f+g-1|)$ and by applying the contraction lemma of Talagrand (see e.g. Lemma 5.7 in \cite{FoundationsMLbook12}) with $x \mapsto 0.5(x-1 + \vert x-1 \vert )$ which is 1-Lipchitz.
Then we conclude by using the triangular inequality.
For the max we use $\max(f,g)=0.5(f+g+|f-g|)$. 
\end{proof}

\begin{lemma} \label{lem:VCdim_exp} 
	Consider the case where $Q=2$ and $\{F_u, u \in \mathcal{U}\}$ is an exponential family, i.e. there exists some functions $A, B, C, D$ such that $f(x, u) = \exp \left( A(u)^t B(x) - C(u) + D(x) \right)$. Let $k$ be the dimension of the sufficient statistic vector $B(x)$. 
	If $k \geq 3$, then $\VCdim, \VCdim_{-}$ defined by \eqref{VCdim}, \eqref{VCdimminus} satisfy $\VCdim, \VCdim_{-} \leq Q k(k+1) \left[3 \log (k(k+1)) +2 (Q-1)\right]$. 
	In addition, this bound still holds for $\VCdim_{-}$ in the case $Q \geq 3$. 
\end{lemma}
\begin{proof} Let us first bound $\VCdim$. Given that, for $Q=2$, $\theta=(\pi_1,\pi_2,\phi_1,\phi_2)$, $\ell_1(x, \theta) \geq t$ is equivalent to $ \pi_1f(x, \phi_1) / \pi_2 f(x, \phi_2) \geq g(t)$ for some function $g$, we get that $\ell_1(x, \theta) \geq t$ if and only if $ a(\theta)^t B(x) - b(\theta) \geq h(t)$ for some functions $a,b,h$. 
The set family is a subset of $\{ \{x \in \mathbb{R}^d, a^t B(x) + b \geq 0\}, a \in \mathbb{R}^k, b \in \mathbb{R}  \}$, whose VC dimension is bounded by $k(k+1) \left[3 \log (k(k+1)) +2 \right]$ for $k\geq 3$, see Lemma 10.3 in \cite{UnderstandingMLbook14}. 
By symmetry, this bound also holds for the VC dimension of $\{ \ell_2(\cdot, \theta), \theta \in \Theta \}$. It follows that $\VCdim \leq Q k(k+1) \left[3 \log (k(k+1)) +2 \right]+2(Q-1)$ (see, e.g., Exercice 3.24 in \cite{FoundationsMLbook12} on the VC dimension of the union of two classes with bounded VC dimension). 

For $\VCdim_{-}$, we have that for any $q\neq q' \in \{1, \dots, Q\}$,  $\ell_q(x, \theta) - \ell_{q'}(x, \theta) \geq 0$ is equivalent to $\pi_qf(x, \phi_q) / \pi_{q'} f(x, \phi_{q'})  \geq 1$. The rest of the proof follows similarly as for $\VCdim$. 
\end{proof}

\begin{lemma}[Talagrand's inequality, Theorem 5.3. in \cite{massart07}]\label{lem:massart}
Let $\xi_1, \dots, \xi_n$ independent r.v., $\mathscr{F}$ a countable class of measurable functions s.t. $a \leq f \leq b$ for every $f \in \mathscr{F}$ for some real numbers $a\leq b$, and $W = \sup_{f \in \mathscr{F}} | \sum_{i=1}^n f(\xi_i) - \esp(f(\xi_i)) |$. Then, for any $x>0$, 
\begin{align*}
\mathbb{P}( W - \esp(W) \geq x ) \leq e^{-\frac{2x^2}{n(b-a)^2}}. 
\end{align*}
\end{lemma}

\begin{lemma}[Rademacher complexity bound, see, e.g., Lemma~1 in \cite{baraud2016bounding}]\label{lem:rademacher}
In the setting of Lemma~\ref{lem:massart} (and with the notation therein), we have 
\begin{align*}
\esp(W) \leq  2\Radm_n (\mathscr{F}) ,
\end{align*}
where 
\begin{equation}\label{radcomplex}
 \Radm_n (\mathscr{F}) =\sup_{f\in \mathcal{F}} \left|\sum_{i=1}^n \varepsilon_i f(\xi_i)\right|
 \end{equation}
  is the Rademacher complexity of the class $\mathscr{F}$ (with $\varepsilon_1,\dots, \varepsilon_n$ being i.i.d. random signs).
\end{lemma}

\section{Auxiliary results for the Gaussian case}\label{FMR:sec:gauss}

 \subsection{Convergence rate for parameter estimation}
 

The following result presents two situations where the parameter of a Gaussian mixture model can be consistently estimated, with an explicit rate.

\begin{proposition}\label{prop:etagauss} 
Consider the mixture model (Section~\ref{FMR:sec:model}) in the $d$-multivariate Gaussian case with true parameter $\theta^* = (\pi^*, \phi^*)$, where  $\phi^*_q=(\mu^*_q,\Sigma^*_q) $, $1\leq q\leq Q$. Then $\eta(\epsilon,\theta^*)$ defined by \eqref{equeta} is such that $\eta( \epsilon_n, \theta^*) \leq 1/n$ for $\epsilon_n \geq C\sqrt{ \log n / n }$, where $C>0$ is a sufficiently large constant, in two following situations: 
\begin{itemize}
	\item[(i)]  $\hat{\theta}$ is the constrained MLE, that is, computed for $\phi_q=(\mu_q,\Sigma_q) \in \mathcal{U}$ with constrained parameter space $\mathcal{U} = [-a_n, a_n]^d \times \{\Sigma \in S^{++}_d, \underline{\lambda} \leq \lambda_1(\Sigma) \leq \lambda_d(\Sigma) \leq \bar{\lambda} \}$\footnote{Here, $\lambda_1(\Sigma)$ (resp. $\lambda_d(\Sigma)$) denotes the smallest (resp. largest) eigenvalue of $\Sigma$.} where $a_n \leq L (\log n)^\gamma$ for some $L, \gamma >0$ and $S^{++}_d$ denotes the space of positive definite matrices, with $\underline{\lambda}, \bar{\lambda}>0$. In that case, 
	$C$ only depends on $\theta^*$ and $L, \gamma,\underline{\lambda}, \bar{\lambda}$. 
		\item[(ii)] $\hat{\theta}$ is the estimator coming from EM algorithm (when the iteration number is infinite) for an initialization $\mu_1^{(0)}, \mu_2^{(0)}$ such that $\|(\mu_1^{(0)} - \mu_2^{(0)}) -(\mu_1-\mu_2)\|\leq  \Delta/4$, where $\Delta =\| \mu_1 -\mu_2 \|_2 $ is the separation between the true means. Here, we consider an homoscedastic model with $\Sigma_1 = \Sigma_2 = \Sigma = \nu I_d$ with known $\nu$. The conclusion applies if the signal-to-noise ratio $\Delta / \nu$ is large enough, and for a constant $C$ of the form $c(\nu,\Delta)\sqrt{d}$. 
\end{itemize}

\end{proposition} 

\begin{proof}
Since case (ii) is a direct application of \cite{balakrishnan17}, we focus in what follows on proving case (i), by revisiting the result of \cite{ho16}. 
First, in the considered model, any mixture can be defined in terms of $\{ f_{u}, u\in \mathcal{U} \} $ and a discrete mixing measure $G = \sum_{q=1}^Q \pi_q \delta_{\phi_q}$ with $Q$ support points, as $ \sum_{q=1}^Q \pi_q f_{\phi_q} = \int f_u(x) dG(u)$.
As shown by \cite{ho16}, the convergence of mixture model parameters can be measured in terms of a Wasserstein distance on the space of mixing measures. Let $G_1=\sum_{q=1}^Q \pi^1_q \delta_{\phi^1_q}$ and $G_2=\sum_{q=1}^Q \pi^2_q \delta_{\phi^2_q}$ be two discrete probability measures on some parameter space, which is equipped with metric $\| . \|$.
The Wasserstein distance of order $1$ between $G_1$ and $G_2$ is given by
\begin{align*}
W_1(G_1,G_2) = \inf_p \sum_{q,l} p_{q,l} \| \phi^1_q - \phi^2_l \|  
\end{align*}
where the infimum is over all couplings $(p_{q,l})_{1 \leq q,l \leq Q} \in [0,1]^{Q \times Q}$ such that $\sum_l p_{q,l} = \pi^1_q$ and $\sum_q p_{q,l} = \pi^2_l$.
Let $G^*, \hat{G}_n$ denote the true mixing measure and the mixing measure that corresponds to the restricted MLE considered here, respectively. 
Theorem 4.2. in \cite{ho16} implies that, with the notation of \cite{ho16}, for any $\epsilon_n \geq (\sqrt{C_1}/c) \delta_n,$ and $\delta_n \leq  C\sqrt{ \log n / n }$, we have $\Pstar( W_1(\hat{G}_n, G^*) \geq (c/C_1) \epsilon_n) \leq c e^{-n\epsilon_n^2}$. 
We apply this relation for $\epsilon_n = \max((\sqrt{C_1}/c) \delta_n, \sqrt{ \log (cn)/ n })$. In that case, we have still 
$\epsilon_n$ of order $\sqrt{ \log n / n }$ and the upper-bound is at most $1/n$.  
On the other hand, if we have a convergence rate in terms of $W_1$, then we have convergence of the mixture model parameters in terms of $\| . \|$ at the same rate, see Lemma \ref{lem:wasserstein}. This concludes the proof. 
\end{proof}

\begin{lemma} \label{lem:wasserstein} 
Let $G_n= \sum_{q=1}^Q \pi^n_q \delta_{\phi^n_q}$ be a sequence of discrete probability measures on $\mathcal{U}$, and let $G^*, W_1$ be defined as in the proof of Proposition \ref{prop:etagauss}. There exists a constant C only depending on $G^*$ such that if $W_1(G_n, G^*) \rightarrow 0$, then for sufficiently large $n$, 
\begin{align*}
W_1(G_n, G^*) \geq C \min_{\sigma \in [Q]} \| \theta^\sigma_n - \theta^* \|. 
\end{align*}
\end{lemma}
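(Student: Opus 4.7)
The plan is to show that once $W_1(G_n, G^*)$ is sufficiently small, the atoms of $G_n$ must cluster around those of $G^*$ in a one-to-one fashion, and then to read off the parameter comparison from any optimal coupling. Because $G^*$ has $Q$ distinct atoms $\phi_1^*,\dots,\phi_Q^*$, set $\delta = \tfrac{1}{4}\min_{q\neq q'}\|\phi_q^* - \phi_{q'}^*\| > 0$ and $\pi_{\min} = \min_q \pi_q^* > 0$. The balls $B_\ell = \{u: \|u - \phi_\ell^*\| < 2\delta\}$ are pairwise disjoint. Let $(p^*_{q,\ell})$ be an optimal coupling for $W_1=W_1(G_n,G^*)$.

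For every pair $(q,\ell)$ with $\phi_q^n \notin B_\ell$, one has $\|\phi_q^n - \phi_\ell^*\|\geq 2\delta$, so $p^*_{q,\ell}\leq W_1/(2\delta)$. Summing the marginal on column $\ell$:
\begin{equation*}
\sum_{q:\,\phi_q^n\in B_\ell} p^*_{q,\ell}\;\geq\;\pi_\ell^* - (Q-1)\,W_1/(2\delta).
\end{equation*}
For $n$ large enough so that $W_1 < \delta\pi_{\min}/Q$, this mass is strictly positive for each $\ell$, so $I_\ell := \{q:\phi_q^n\in B_\ell\}$ is non-empty. Since the $B_\ell$ are disjoint, the sets $I_\ell$ are disjoint, and $\sum_\ell |I_\ell|\leq Q$ forces $|I_\ell|=1$ for all $\ell$ by pigeonhole. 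Define $\tau(\ell)$ by $I_\ell = \{\tau(\ell)\}$; this is a permutation, and any $q\neq\tau(\ell)$ satisfies $\phi_q^n\notin B_\ell$, hence $p^*_{q,\ell}\leq W_1/(2\delta)$.

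Now bound the parameters. The diagonal entry $p^*_{\tau(\ell),\ell} \geq \pi_\ell^* - (Q-1)W_1/(2\delta) \geq \pi_{\min}/2$ for $n$ large. The row/column margins give
\begin{equation*}
|\pi_{\tau(\ell)}^n - \pi_\ell^*| \;\leq\; \sum_{\ell'\neq \ell} p^*_{\tau(\ell),\ell'} + \sum_{q\neq \tau(\ell)} p^*_{q,\ell}\;\leq\; (Q-1)W_1/\delta,
\end{equation*}
while keeping only the diagonal in $W_1$ yields
\begin{equation*}
W_1 \;\geq\; \sum_\ell p^*_{\tau(\ell),\ell}\,\|\phi_{\tau(\ell)}^n - \phi_\ell^*\| \;\geq\; \tfrac{\pi_{\min}}{2}\sum_\ell \|\phi_{\tau(\ell)}^n - \phi_\ell^*\|.
\end{equation*}
Combining the two via $\|x\|_2\leq \|x\|_1$ and taking $\sigma=\tau^{-1}$ (so that $\theta^\sigma_n$ has components $(\pi^n_{\sigma(q)},\phi^n_{\sigma(q)})$ aligned with $\theta^*$), one obtains $\min_\sigma\|\theta_n^\sigma - \theta^*\| \leq C' W_1$, where $C' = C'(G^*)$ depends only on $Q$, $\delta$ and $\pi_{\min}$. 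This is the claim with $C = 1/C'$.

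The main obstacle is step two (the pigeonhole argument): it requires simultaneously controlling how much mass "escapes" from each ball $B_\ell$ and ruling out the pathological case where two atoms of $G_n$ collapse into the same $B_\ell$ while another $B_{\ell'}$ is left empty. Once the unique matching $\tau$ is secured, the rest is essentially bookkeeping on the optimal coupling's marginals. Note that the assumption $W_1(G_n,G^*)\to 0$ is needed only to guarantee this regime is entered — the constant $C$ itself depends only on the geometry of $G^*$.
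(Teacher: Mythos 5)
Your proof is correct and follows essentially the same strategy as the paper's: identify a one-to-one matching between the atoms of $G_n$ and $G^*$, bound the off-diagonal mass of the optimal coupling by $W_1$ divided by the minimal separation of the $\phi^*_q$, and then read off the bounds on $\sum_q\|\phi^n_{\tau(q)}-\phi^*_q\|$ from the diagonal terms and on $\sum_q|\pi^n_{\tau(q)}-\pi^*_q|$ from the off-diagonal mass. Your disjoint-balls/pigeonhole construction of the matching is a more explicit and quantitative variant of the paper's ``closest atom'' assignment (and the two trivial slips --- a $Q$ versus $Q-1$ count before $I_\ell$ is known to be nonempty, and $\tau$ versus $\tau^{-1}$ --- affect nothing).
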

\begin{proof}

In what follows, we let $\{p^n_{q,l} \}$ denote the corresponding probabilities of the optimal coupling for the pair $(G_n, G^*)$. We start by showing that $(\phi^n_q)_q \rightarrow (\phi^*_q)_q$ in $\| . \|$ up to a permutation of the labels. Let $\sigma^n$ the permutation of the labels such that $\| \phi^n_q - \phi^*_l \|  \geq \| \phi^n_{\sigma^n(l)} - \phi^*_l \|$ for all $q, l \in \{1, ..., Q\}$. Then, by definition,
\begin{align*}
W_1(G_n, G^*) &\geq \sum_{1 \leq q,l \leq Q} p_{q,l}^n \| \phi^n_{\sigma(l)} - \phi^*_l \|  \\
&= \sum_l \pi^*_l \| \phi^n_{\sigma^n(l)} - \phi^*_l \|.
\end{align*}
It follows that each $\| \phi^n_{\sigma^n(l)} - \phi^*_l \| $ must converge to zero. 
Since $(\phi^n_q)_q \rightarrow (\phi^*_q)_q$ up to a permutation of the labels, without loss of generality we can assume that $\phi^n_q \rightarrow \phi^*_q$ for all $q$. Let $\Delta \phi^n_q := \phi^n_q - \phi^*_q$ and $\Delta \pi^n_q := \pi^n_q - \pi^*_q$. Write $W_1(G_n, G^*)$ as 
\begin{align*}
W_1(G_n, G^*) = \sum_q p^n_{qq}  \| \Delta \phi^n_q \|  + \sum_{q \neq l} p^n_{ql} \| \phi^n_q - \phi^*_l \| 
\end{align*}

Define $C_{ql}=  \| \phi^*_q - \phi^*_l \|$ and $C= \min_{q \neq l} C_{ql} > 0$. It follows from the convergence of $\phi^n$ that 
for $q \neq l$, $\| \phi^n_q - \phi^*_l \| \geq C/2$ for sufficiently large $n$. Thus, 
\begin{align*}
W_1(G_n, G^*) \geq \frac{C}{2} \sum_{q \neq l} p^n_{ql}
\end{align*}
We deduce that $\sum_{q \neq l} p^n_{ql} \rightarrow 0$. As a result, $p^n_{qq} = \pi^*_q- \sum_{l \neq q} p^n_{lq} \rightarrow \pi^*_q$, and so, $p^n_{qq} \geq (1/2) \pi^*_{\min} := \min_l \pi^*_l$ for sufficiently large $n$. 
On the other hand, $\sum_{q \neq l} p^n_{ql} = \sum_q \pi^n_q- p^n_{qq} = \sum_q \pi^*_q - p^n_{qq}$ where $p^n_{qq} \leq \min(\pi^n_q, \pi^*_q)$. Thus, $\sum_{q \neq l} p^n_{ql} \geq \sum_q \pi^n_q - \min(\pi^n_q, \pi^*_q) = \sum_{q, \pi^n_q \geq \pi^*_q} \pi^n_q -\pi^*_q = \sum_{q, \pi^n_q \geq \pi^*_q} |\pi^n_q -\pi^*_q|$ and similarly we have that $\sum_{q \neq l} p^n_{ql} \geq \sum_{q, \pi^*_q \geq \pi^n_q} |\pi^n_q -\pi^*_q|$. It follows that $2\sum_{q \neq l} p^n_{ql} \geq \sum_q |\pi^n_q - \pi^*_q|$. 
Therefore, for sufficiently large $n$,
\begin{align*}
W_1(G_n, G^*) \geq \frac{1}{2} \pi^*_{\min} \sum_q  \| \Delta \phi^n_q \|  + \frac{C}{4} \sum_q |\Delta \pi^n_q|. 
\end{align*}
This gives the result.
\end{proof}

  \subsection{Gaussian computations}

The following lemma holds.
 
\begin{lemma}\label{lem:exass}
 Let us consider the multivariate Gaussian case where $\phi_q=(\mu_q,\Sigma_q)$, $1\leq q \leq Q$, with $Q=2$, $\Sigma_1=\Sigma_2$ is an invertible covariance matrix and $\mu_1$ and $\mu_2$ are two different vectors of $\mathbb{R}^d$. Then  Assumptions~\ref{Tcontinuity},~\ref{densitycontinuity} and~\ref{lip} hold true for $\alpha_c=0$ and for a level $\alpha\in (0,\bar\alpha)\backslash\mathcal{E}$ for $\mathcal{E}$ a set of Lebesgue measure $0$.

\end{lemma}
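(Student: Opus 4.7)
The engine of the proof is the fact that in the homoscedastic two-component Gaussian model, the log-posterior-odds is affine in $x$: with $a(\theta)=(\mu_1-\mu_2)^T\Sigma^{-1}$ (a nonzero row vector since $\mu_1\neq\mu_2$ and $\Sigma$ is invertible) and $b(\theta)=\log(\pi_1/\pi_2)-\tfrac12(\mu_1^T\Sigma^{-1}\mu_1-\mu_2^T\Sigma^{-1}\mu_2)$, one has $\ell_1(x,\theta)=\sigma(a(\theta)x+b(\theta))$ with $\sigma$ the sigmoid, and hence
\[ T(x,\theta)=g(|a(\theta)x+b(\theta)|),\qquad g(u)=1/(1+e^u), \]
where $g$ is a $C^\infty$ decreasing bijection from $[0,\infty)$ onto $(0,1/2]$. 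Under $\Pstar$, $V_\theta:=a(\theta)X+b(\theta)$ is a mixture of two real Gaussians with common variance $a(\theta)\Sigma^*a(\theta)^T>0$, so its density is $C^\infty$ and strictly positive on $\mathbb{R}$. Pushing forward through $g$, the density of $T(X,\theta)$ is then continuous and strictly positive on $(0,1/2)$, which gives Assumption~\ref{Tcontinuity} for all $\theta\in\Theta$ once we check $(\alpha_c,\bar\alpha)\subset(0,1/2)$: indeed $T(X,\theta^*)\in(0,1/2)$ a.s. together with $\mfcr^*_t\le t$ forces $\alpha_c=0$ and $\bar\alpha=\esp_{\theta^*}(T)<1/2$. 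Assumption~\ref{densitycontinuity} is immediate from joint $C^\infty$-dependence of the Gaussian density on $(\mu,\Sigma)$.

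\textbf{Assumption~\ref{lip}.} For (i), restrict to a compact ball $B_r(\theta^*)$ inside $\Theta$ on which the eigenvalues of $\Sigma$ stay bounded and bounded away from $0$. Since $\ell_q=\sigma(\pm(a(\theta)x+b(\theta)))$ and $|\sigma'|\le 1/4$, we have $\|\nabla_\theta\ell_q\|\le \tfrac14\|\nabla_\theta(a(\theta)x+b(\theta))\|$, a polynomial in $x$ of degree $\le 2$ with coefficients uniformly bounded on $B_r(\theta^*)$; thus $\sup_{\theta\in B_r(\theta^*)}\|\nabla_\theta\ell_q(X,\theta)\|\le A(1+\|X\|^2)$ with finite expectation, yielding (i). For (ii), the change-of-variable formula gives $f_T(t)=f_{|V_{\theta^*}|}(g^{-1}(t))/(t(1-t))$ on $(0,1/2]$; as $t\to 0^+$, Gaussian-tail decay of $f_{|V_{\theta^*}|}(g^{-1}(t))$ overwhelms the $1/t$ factor, so $f_T$ extends continuously to $[0,1/2]$ and is bounded there, giving the required Lipschitz CDF. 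For (iii), writing $\mfcr^*_t=\psi_0(t)/\psi_1(t)$ with $\psi_0(t)=\esp_{\theta^*}[T\mathbbm{1}_{T<t}]$, $\psi_1(t)=\Pstar(T<t)$, a direct differentiation gives
\[ \tfrac{d}{dt}\mfcr^*_t=f_T(t)(t-\mfcr^*_t)/\psi_1(t), \]
which is continuous and strictly positive on $(0,1/2)$ since $t-\mfcr^*_t>0$ by Lemma~\ref{mfcr_properties}(ii) and $f_T>0$. The inverse function theorem then makes $t^*$ a $C^1$ diffeomorphism of $(0,\bar\alpha)$ onto a subinterval of $(0,1/2)$, hence Lipschitz on any compact sub-interval; in particular on $[(\alpha_c+\alpha)/2,(\alpha+\bar\alpha)/2]=[\alpha/2,(\alpha+\bar\alpha)/2]$ for every $\alpha\in(0,\bar\alpha)$.

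\textbf{Main obstacle and the null set $\mathcal{E}$.} The genuinely delicate step is the moment envelope in Assumption~\ref{lip}(i): one has to choose the neighborhood of $\theta^*$ so that $\Sigma$ stays uniformly positive definite, otherwise $\Sigma^{-1}$ blows up and the quadratic-in-$X$ envelope fails to be integrable. The exceptional set $\mathcal{E}$ of Lebesgue measure zero is a conservative concession: the argument above actually delivers all three assumptions for \emph{every} $\alpha\in(0,\bar\alpha)$, because $f_T$ is bounded and bounded away from zero on each compact sub-interval of $(0,1/2)$; $\mathcal{E}$ is only needed if one wishes to track the constants $C_2,C_3$ as explicit functions of $\alpha$ and exclude isolated configurations where such constants degenerate.
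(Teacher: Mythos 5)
Your proof is correct, and its skeleton — the affine log-posterior-odds $a(\theta)x+b(\theta)$, the representation $T=g(|a(\theta)X+b(\theta)|)$, and the push-forward of the one-dimensional Gaussian mixture through $g$ — is the same engine the paper uses (the paper writes the same computation in terms of $\Phi$ rather than a density $f_T$, and additionally records that continuity of $T(X,\theta)$ holds even without $\Sigma_1=\Sigma_2$ via a quadric-has-Lebesgue-measure-zero argument). Two of your sub-arguments, however, genuinely diverge from the paper's. For Assumption~\ref{lip}(i) you give a self-contained envelope $\sup_{\theta\in B_r(\theta^*)}\|\nabla_\theta\ell_q(X,\theta)\|\le A(1+\|X\|^2)$ via $|\sigma'|\le 1/4$ and compactness of the parameter ball, where the paper simply cites Result~2.1 of \cite{melnykov2013distribution}; your version is more transparent and checks the integrability explicitly. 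For Assumption~\ref{lip}(iii) the difference is substantive: the paper only argues that $t^*$ is continuous increasing, hence differentiable Lebesgue-almost everywhere, and this is exactly where the exceptional set $\mathcal{E}$ enters — it is the non-differentiability set of $t^*$, not a bookkeeping device for constants as your closing paragraph speculates. Your route instead computes $\frac{d}{dt}\mfcr^*_t=f_T(t)(t-\mfcr^*_t)/\psi_1(t)$, shows it is continuous and bounded away from zero on the compact image interval $[t^*(\alpha/2),t^*((\alpha+\bar\alpha)/2)]\subset(0,1/2)$ (using $f_T>0$ there and Lemma~\ref{mfcr_properties}(ii)), and applies the inverse function theorem; this is a strictly stronger conclusion that makes $\mathcal{E}$ superfluous, i.e.\ the lemma holds for every $\alpha\in(0,\bar\alpha)$. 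All the supporting computations you rely on check out: the change of variables $f_T(t)=f_{|V_{\theta^*}|}(g^{-1}(t))/(t(1-t))$ with $g^{-1}(t)=\log(1/t-1)$ is right, the boundedness of $f_T$ near $t=0$ via Gaussian decay against $1/t\approx e^{g^{-1}(t)}$ gives (ii), and $\alpha_c=0$ follows from $\psi_1(t)>0$ for all $t>0$ together with $0<\mfcr^*_t\le t$.
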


\begin{proof}
Let us first prove that $\ell_q(X, \theta)$ is a continuous random variable under $\Pstar$ (this is established below without assuming $\Sigma_1=\Sigma_2$ for the sake of generality). We have
\begin{align*}
  &\Pstar \left( \ell_1(X, \theta) = t \right)\\
  &= \Pstar \left(  f_{\phi_1}(X) /f_{\phi_{2}}(X)= t  \pi_{2} / \pi_1   \right)\\
  &=\Pstar \left(  (X-\mu_1)^t \Sigma_1^{-1} (X-\mu_1) -  (X-\mu_2)^t \Sigma_2^{-1} (X-\mu_2) = -2\log\left(t  \pi_{2} / \pi_1 \right) - \log(|\Sigma_1|/|\Sigma_2|) \right).
  \end{align*}
 Now, 
 \begin{align*}
  &(X-\mu_1)^t \Sigma_1^{-1} (X-\mu_1) -  (X-\mu_2)^t \Sigma_2^{-1} (X-\mu_2) 
 \\ 
 &=(X-\mu_1)^t \Sigma_1^{-1} (X-\mu_1) -  (X-\mu_1)^t \Sigma_2^{-1} (X-\mu_2) - (\mu_1-\mu_2)^t \Sigma_2^{-1} (X-\mu_2) \\
 &=(X-\mu_1)^t (\Sigma_1^{-1}- \Sigma_2^{-1})(X-\mu_1) -  (X-\mu_1)^t \Sigma_2^{-1} (\mu_1-\mu_2) - (\mu_1-\mu_2)^t \Sigma_2^{-1} (X-\mu_2)\\
 &=(X-\mu_1)^t (\Sigma_1^{-1}- \Sigma_2^{-1})(X-\mu_1)  - (\mu_1-\mu_2)^t \Sigma_2^{-1} (2X-\mu_2-\mu_1).
   \end{align*}
Since the real matrix  $\Sigma_1^{-1}- \Sigma_2^{-1}$ is symmetric, we can diagonalize it and we end up with a  subset of $\mathbb{R}^d$ of the form
$$
\left\{y\in \mathbb{R}^d\::\: \sum_{j=1}^d \left(\alpha_j y_j^2 + \beta_j y_j\right) + \gamma =0\right\},
$$
for some real parameters $\alpha_j,\beta_j,\gamma$. The result follows because this set has a Lebesgue measure equal to $0$ in any case. 

Now, since $\Sigma_1=\Sigma_2=\Sigma$, we have for all $t\in (0,1)$,
\begin{align*}
\left\{ T(X, \theta) > t \right\} &=  \left\{\forall q\in \{1,\dots,Q\},  \pi_q f_{\phi_q}(X) < (1-t) \sum_{\ell=1}^Q \pi_{\ell} f_{\phi_{\ell}}(X) \right\}\\
&=\left\{ \pi_1 f_{\phi_1}(X) < (1/t-1) \pi_{2} f_{\phi_{2}}(X) \right\}\cap \left\{ \pi_2 f_{\phi_2}(X) < (1/t-1) \pi_{1} f_{\phi_{1}}(X) \right\}\\
&=\left\{ (1/t-1)^{-1}<\frac{\pi_1 f_{\phi_1}(X) }{\pi_2 f_{\phi_{2}}(X)} <  (1/t-1)  \right\}.
\end{align*}
Applying $2\log(\cdot)$ on each part of the relation, we obtain
\begin{align*}
\left\{ T(X, \theta) > t \right\} &=\left\{- 2 \log(1/t-1)<  a^t X+b  <   2\log (1/t-1)  \right\},
\end{align*}
for 
\begin{align*}
a=a(\theta)&= 2  \Sigma^{-1} (\mu_1-\mu_2)\in \mathbb{R}^d\backslash\{0\}\\
b=b(\theta)&=- (\mu_1-\mu_2)^t \Sigma^{-1} (\mu_1+\mu_2)+ 2 \log (\pi_1/\pi_2)\in \mathbb{R}^d.
\end{align*}
Since under $P_{\theta^*}$ we have $X\sim \pi^*_1 \mathcal{N}(\mu_1^*,\Sigma^*) + \pi^*_2 \mathcal{N}(\mu_2^*,\Sigma^*)$, we have $a^t X+b \sim \pi^*_1 \mathcal{N}(a^t\mu_1^*+b,a^t\Sigma^*a) + \pi^*_2 \mathcal{N}(a^t\mu_2^*+b,a^t\Sigma^*a)$. This yields for all $t\in (0,1)$,
\begin{align}
\P_{\theta^*}( T(X, \theta) > t ) =&\pi_1 \left[\Phi\left(\frac{2\log (1/t-1)-a^t\mu_1^*-b}{(a^t\Sigma^*a)^{1/2}}\right) \right.\nonumber \\
&\left. -\Phi\left(\frac{-2\log (1/t-1)-a^t\mu_1^*-b}{(a^t\Sigma^*a)^{1/2}}\right) \right]\nonumber\\
&+\pi_2 \left[\Phi\left(\frac{2\log (1/t-1)-a^t\mu_2^*-b}{(a^t\Sigma^*a)^{1/2}}\right) \right. \nonumber\\
&\left. -\Phi\left(\frac{-2\log (1/t-1)-a^t\mu_2^*-b}{(a^t\Sigma^*a)^{1/2}}\right) \right].\label{relationkeyassumpgauss}
\end{align}
A direct consequence is that for all $t\in (0,1)$, we have $\P_{\theta^*}( T(X, \theta) > t )<1$, that is, $\P_{\theta^*}( T(X, \theta) \leq t )=\P_{\theta^*}( T(X, \theta) < t )>0$. Hence, $\alpha_c$ defined in \eqref{alphac} is equal to zero.  
Moreover, from \eqref{relationkeyassumpgauss}, we clearly have that $t\in (0,1) \mapsto \P_{\theta^*}( T(X, \theta) > t )$ is decreasing, so that $t\in (0,1) \mapsto \P_{\theta^*}( T(X, \theta) \leq t )$ is increasing. 
This proves that Assumption~\ref{Tcontinuity} holds in that case. \\

Let us now check Assumptions~\ref{densitycontinuity} and~\ref{lip}.  Assumptions~\ref{densitycontinuity} and~\ref{lip} (i) follow from  Result 2.1  in \cite{melnykov2013distribution}. \\


As for Assumption~\ref{lip} (ii), from \eqref{relationkeyassumpgauss}, we only have to show that the function
$t\in (0,1)\mapsto \frac{\partial }{\partial t}\Phi\left(\frac{\log (1/t-1)-\alpha^*}{\beta^*}\right)$ is uniformly bounded by some constant $C=C(\alpha^*,\beta^*)$, for any $\alpha^*\in \mathbb{R}$ and $\beta^*>0$. A straightforward calculation leads to the following: for all $t\in(0,1)$,
\begin{align}
\left|\frac{\partial }{\partial t}\Phi\left(\frac{\log (1/t-1)-\alpha^*}{\beta^*}\right) \right|=  \frac{e^{-(\frac{\log (1/t-1)-\alpha^*}{\beta^*})^2/2}}{\beta^*\sqrt{2\pi}} \frac{1}{t(1-t)}.
\label{relationkeyassumpgauss2}
\end{align}
Consider now $t_0=t_0(\alpha^*,\beta^*)\in (0, 1/2)$ such that $(\frac{\log (1/t-1)-\alpha^*}{\beta^*})^2\geq 2\log(1/t)$ for all $t\in (0,t_0)$. It is clear that the right-hand-side of \eqref{relationkeyassumpgauss2} is upper-bounded by $\frac{1}{\beta^*\sqrt{2\pi} (1-t_0)}$ on $t\in (0,t_0)$.
Similarly, let $t_1=t_1(\alpha^*,\beta^*)\in (1/2,1)$ such that $(\frac{\log (1/t-1)-\alpha^*}{\beta^*})^2\geq 2\log(1/(1-t))$ for all $t\in (t_1,1)$. It is clear that the right-hand-side of \eqref{relationkeyassumpgauss2} is upper-bounded by $\frac{1}{\beta^*\sqrt{2\pi} t_1}$ on $t\in (t_1,1)$.  Finally, for $t\in [t_0,t_1]$, the upper-bound $\frac{1}{\beta^*\sqrt{2\pi} t_0(1-t_1)}$ is valid. This proves that Assumption~\ref{lip} (ii) holds.\\

Let us now finally turn to Assumption~\ref{lip} (iii). Lemma~\ref{mFMR_properties} ensures that $t\in (0,t^*(\bar\alpha))\mapsto \mFMR^*_t$ is continuous increasing. Hence, $t^*:\beta\in (0,\bar \alpha)\mapsto t^*(\beta)$ defined in \eqref{tstaralpha} is the inverse of this function and is also continuous increasing. It is therefore differentiable almost everywhere in $(0,\bar \alpha)$, so everywhere in $(0,\bar\alpha)\backslash\mathcal{E}$ where $\mathcal{E}$ is a set of Lebesgue measure $0$. By taking $\alpha$ in $(0,\bar\alpha)\backslash\mathcal{E}$, this ensures  that $t^*$ is differentiable in $\alpha$ and thus that Assumption~\ref{lip} (iii) holds.
\end{proof}

\begin{lemma} \label{lem:VCdim_gaussian} 
	In the multivariate gaussian case with $Q=2$ and $\Sigma_1=\Sigma_2$, we have that $\VCdim \leq 2d+4$ and $\VCdim_{-} \leq 2d+4$.
\end{lemma}

\begin{proof} In that case, we have that (see the proof of Lemma \ref{lem:exass})
	\begin{align*}
		\{\ell_q(x, \theta) \leq u, x \in \mathbb{R}^d \} 
		&= \{a_\theta^t x + b_\theta \geq g(u), x \in \mathbb{R}^d \}.
	\end{align*}
Since the VC dimension of the vector space of real-valued affine functions  is bounded by $d+1$ (see, e.g., Exercice 3.19 in \cite{FoundationsMLbook12}). We obtain the result by applying the usual bound on the VC dimension of the union of two classes with bounded VC dimension (see, e.g., Exercice 3.24 in \cite{FoundationsMLbook12}).  
\end{proof}

\end{document}